\definecolor{darkblue}{rgb}{0,0,0.6}
\newcommand{\FF}{\mathbb{F}}
\newcommand{\ZZ}{\mathbb{Z}}
\newcommand{\RR}{\mathbb{R}}
\newcommand{\IZ}{\mathbb{Z}}
\newcommand{\op}{{\mathrm{op}}}
\DeclareMathOperator{\id}{id}
\DeclareMathOperator{\aug}{aug}
\DeclareMathOperator{\Aut}{Aut}
\DeclareMathOperator{\gr}{gr}
\DeclareMathOperator{\Hom}{Hom}
\DeclareMathOperator{\thick}{thick}
\DeclareMathOperator{\Ext}{Ext}
\DeclareMathOperator{\PSL}{PSL}
\DeclareMathOperator{\ind}{ind}
\DeclareMathOperator{\res}{res}
\DeclareMathOperator{\perf}{perf}
\DeclareMathOperator{\End}{End}
\DeclareMathOperator{\modcat}{mod}
\numberwithin{equation}{section}
\newtheorem{theorem}[equation]{Theorem}
\newtheorem{corollary}[equation]{Corollary}
\newtheorem{lemma}[equation]{Lemma}
\newtheorem{proposition}[equation]{Proposition}
\theoremstyle{definition}
\newtheorem{example}[equation]{Example}
\newtheorem{definition}[equation]{Definition}
\newtheorem{remark}[equation]{Remark}
\newtheorem{question}[equation]{Question}
\title[Perfect complexes and an equivariant BGG correspondence]{An equivariant BGG correspondence and perfect complexes for extensions by $\ZZ/2\times \ZZ/2$}
\author{Henrik Rüping and Marc Stephan}
\date{April 16, 2024}
\subjclass[2020]{Primary  16E35; Secondary 55M35, 16S35, 20J05, 18G40}
\keywords{Perfect complexes, BGG correspondence, crossed product algebras, free $A_4$-actions}
\begin{document}

%55M35 Finite groups of transformations in algebraic topology (including Smith theory) yes
%55T99 55Txx Spectral sequences in algebraic topology no
%57S17 
%20J06 Cohomology of groups no
%13D22
%16E35 Derived categories and associative algebras yes
%16E45 Differential graded algebras and applications (associative algebraic aspects) 
%16S35 Twisted and skew group rings, crossed products yes
%18G40 Spectral sequences, hypercohomology yes
%18G80 Derived categories, triangulated categories
%20J05 Homological methods in group theory

%\subjclass[2010]{Primary  55M35; Secondary 55T99, 57S17, 20J06, 13D22}

\begin{abstract}
We provide an equivariant extension of Carlsson's BGG correspondence in characteristic two.
As an application we classify perfect cochain complexes of $(\ZZ/2\times \ZZ/2)\rtimes Q$-representations with four-dimensional total homology for finite groups $Q$ of odd order. 
We deduce that cochain complexes of finite, free $A_4$-CW complexes with four-dimensional total homology are rigid: They are determined by the degrees of the nonzero homology groups.
\end{abstract}

\maketitle

\section{Introduction}

A classical problem in the theory of transformation groups asks which finite groups $G$ can act freely on a finite CW complex $X$ homotopy equivalent to a sphere $S^n$. Smith \cite{smith1944} showed that such a group $G$ can not contain an elementary abelian subgroup of rank $2$, i.e., of the form $\ZZ/p\times \ZZ/p$ for any prime $p$. Swan \cite{swan1959} proved that this condition is also sufficient for the existence of $n\geq 1$ such that $G$ acts freely on some $X\simeq S^n$. The problem which dimensions $n$ can occur is still open and related to number theoretic questions; see the survey \cite{hambleton2015}.

The rank conjecture of Benson and Carlson \cite{bensoncarlson1987} states more generally that a finite group $G$ can act freely on a finite CW complex $X$ homotopy equivalent to a product of $r$ spheres $S^{n_1}\times\ldots \times S^{n_r}$ if and only if $G$ does not contain an elementary abelian subgroup of rank $r+1$. For $r=2$, the condition that $G$ can not contain a subgroup of the form $(\ZZ/p)^3$ was already known by work of Heller \cite{heller1954}. For $r=2$ and finite $p$-groups, Adem and Smith \cite{adem2001periodic} showed that it is also sufficient. Nevertheless, it is open whether the simple groups $\PSL_3(\FF_p)$ of rank $2$ for odd primes $p$ can act freely on some $X\simeq S^m\times S^n$. 

In collaboration with Yal{\c c}{\i }n \cite{ruepingstephanyalcin2022}, we investigated which dimensions $m$ and $n$ can occur for free actions of the alternating group $G=A_4$ on $X\simeq S^m\times S^n$ following a question of Blaszczyk \cite{blaszczyk2013free} and extending Oliver's result  \cite[Theorem~2]{oliver1979} that $A_4$ can not act freely on a product of two equidimensional spheres. We proved that any such action yields a parameter ideal with parameters of degrees $m+1$, $n+1$ in the group cohomology ring $H^*(BA_4;\FF_2)$ that is closed under Steenrod operations, and classified these ideals explicitly. These topological obstructions apply to free actions of any finite simple group of rank $2$, as they all contain $A_4$ as a subgroup. In this paper, we approach the study of free $A_4$-actions from a more algebraic side.

Algebraically, we are interested in perfect cochain complexes over the group ring $k[G]$ for a finite group $G$ and field $k$ in modular characteristic with total homology of dimension $2^r$, as the cellular cochain complex of any finite $G$-CW complex homotopy equivalent to a product of $r$ spheres is an example of such a perfect complex. Again, if $r=1$ or $r=2$, then these perfect complexes can only exist if $G$ does not contain a subgroup of the form $(\ZZ/p)^{r+1}$. If the homology is concentrated in one degree, then we are just studying projective modules of dimension $2^r$. For $r=1$ and nonzero homology in two different degrees $m<n$, any bounded above cochain complex $C^*$ is classified by the $k[G]$-representations $H^m(C^*)$, $H^n(C^*)$ and one $k$-invariant element $k(C^*)\in \Ext_{k[G]}^{n-m+1}(H^n(C^*), H^m(C^*))$; see \cite[7.6~Satz]{dold1960}. If $H^*(C^*)$ is finite-dimensional, then the cochain complex $C^*$ is perfect if and only if it has trivial support; see \cite[Theorem~11.4]{bensoniyengarkrause2011}. For $r\geq 2$, we can have nonzero homology in more than two degrees. In general, such cochain complexes are not classified anymore by their $k$-invariants. For the alternating group $A_4=(\ZZ/2\times\ZZ/2)\rtimes C_3$ and, more generally, for any extension of a finite group $Q$ of odd order by $\ZZ/2\times \ZZ/2$, we classify the perfect cochain complexes with four-dimensional total homology.

\begin{theorem}[see~\cref{cor:classification_four_dim_perfect}]\label{thm:classification_four_dim_perfect_intro} Let $\FF$ be a field of characteristic two and let $Q$ be a group of odd order acting on $P=\ZZ/2\times \ZZ/2$. There is a bijection between quasi-isomorphism classes of perfect cochain complexes over $\FF[P\rtimes Q]$ with four-dimensional homology and triples $(l,L,J)$, where $l$ is an integer, $L$ a one-dimensional $Q$-representation, and $J\subset H^*(BP;\FF)$ is a $Q$-invariant parameter ideal.
\end{theorem}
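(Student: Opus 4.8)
The plan is to transport the whole question through the equivariant BGG correspondence of the previous section into commutative algebra over the polynomial ring $S = H^*(BP;\FF) \cong \FF[y_1,y_2]$, on which $Q$ acts via its action on $P$, and there carry out a short classification of ``small'' perfect DG-modules. \textbf{Step 1 (reduction and BGG).} Since $Q$ has odd order and $\FF$ has characteristic two, $\FF[Q]$ is semisimple, so the category of $\FF[P\rtimes Q]$-modules is equivalent, exactly and compatibly with projectives, to the category of $Q$-equivariant $\FF[P]$-modules; hence quasi-isomorphism classes of perfect cochain complexes over $\FF[P\rtimes Q]$ correspond to those of $Q$-equivariant perfect cochain complexes over $\FF[P]$, with unchanged total homology dimension. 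Applying the equivariant BGG correspondence identifies the latter with a homotopy category of $Q$-equivariant perfect DG-$S$-modules, and under this dictionary the requirement that the total $\FF$-dimension of the homology be $4 = 2^2$ translates into the statement that a minimal free model of the DG-$S$-module has underlying free module of total rank $4$ and finite-length homology.

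\textbf{Step 2 (structure over $S$).} A nonzero finite-length module over the two-dimensional regular graded ring $S$ has depth $0$, hence projective dimension exactly $2$ by Auslander--Buchsbaum; its graded Betti numbers $(b_0,b_1,b_2)$ therefore satisfy $b_2>0$, $b_0-b_1+b_2=0$ (the module is torsion, so of generic rank zero), and $b_0+b_1+b_2=4$ in the minimal-rank case. The unique solution is $(1,2,1)$, the Betti table of a complete intersection $S/(f_1,f_2)$ with $f_1,f_2$ a homogeneous regular sequence, i.e.\ of a proper $Q$-invariant parameter ideal $J$. Consequently a perfect DG-$S$-module with minimal total rank $4$ and finite-length homology cannot have homology in more than one cohomological degree (two nonzero degrees would force total rank $\geq 8$), so it is formal: after a shift by an integer $l$ it is quasi-isomorphic to a cyclic $Q$-equivariant $S$-module, which as such is $L\otimes_\FF (S/J)$ for a one-dimensional $Q$-representation $L$ (the $Q$-action on the generator) and a $Q$-invariant parameter ideal $J$, its minimal model being the Koszul complex of $J$.

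\textbf{Step 3 (the bijection).} Steps 1 and 2 already give the assignment ``perfect complex'' $\mapsto (l,L,J)$ and its surjectivity. For injectivity, a quasi-isomorphism of complexes corresponds under Step 1 and BGG to a quasi-isomorphism of DG-$S$-modules, hence by formality to an isomorphism of homology modules; and $L\otimes_\FF (S/J)$ shifted by $l$ determines $l$ as its bottom degree, $J$ as the annihilator of its (essentially unique) cyclic generator, and $L$ as the residual $Q$-action on that generator, so the triple is an invariant. For the reverse construction, given $(l,L,J)$ choose a homogeneous regular sequence generating $J$; since $J$ is $Q$-invariant and each graded piece $J_d$ is a semisimple $Q$-representation, the generators can be chosen to span a $Q$-subrepresentation, so the Koszul complex of this sequence, twisted by $L$ and shifted by $l$, is a $Q$-equivariant perfect DG-$S$-module. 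Transporting it back through BGG and Step 1 produces a perfect cochain complex over $\FF[P\rtimes Q]$ with four-dimensional homology whose associated triple, by uniqueness of minimal free resolutions, is the given one; a different regular sequence generating the same $J$ yields the same quasi-isomorphism class.

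\textbf{Where the work is.} The two load-bearing points are, first, the claim in Step 1 that a perfect complex over $\FF[P\rtimes Q]$ with four-dimensional homology maps under BGG to a DG-$S$-module of minimal total rank exactly $4$ with finite-length homology: this is precisely where the hypothesis $\mathrm{rank}(P)=2$, and the resulting rigidity of the Betti vector $(1,2,1)$, is indispensable, and it is the input one must extract carefully from the BGG section rather than by formal manipulation. Second, one must keep the entire chain of reductions genuinely $Q$-equivariant --- that the BGG correspondence is equivariant as constructed, and that the commutative-algebra inputs used (systems of parameters are regular sequences, minimal free resolutions are unique, Koszul complexes are formal, and generators of a $Q$-invariant homogeneous ideal may be chosen $Q$-equivariantly) all admit equivariant refinements, which they do because $\FF[Q]$ is semisimple but which require attention at each splitting.
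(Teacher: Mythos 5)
Your overall strategy mirrors the paper's: pass through the equivariant BGG correspondence, reduce to classifying rank-four perfect DG-$S$-modules over $S\ast Q$, and then show these are exactly Koszul complexes on $Q$-equivariant regular sequences. Steps 1 and 3 match the paper's \cref{lem:bijectionfourdim}, \cref{lem:bijectionfourdimforM} and \cref{lem:algebra_isomorphism} closely (including the need to choose $Q$-equivariant generators of $J$, which both you and the paper do by semisimplicity of $\FF[Q]$).

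The gap is in Step 2, at the sentence ``so it is formal.'' You deduce formality from the Betti count $(b_0,b_1,b_2)=(1,2,1)$ and the rank bound, but this inference is not justified. An object $M$ of $D^{hf}_{S\text{-}\perf}(S\ast Q)$ is a DG-module with a single grading, not a complex of modules with an independent cohomological degree; the phrase ``cannot have homology in more than one cohomological degree'' does not apply literally to such an $M$, and the heuristic ``two nonzero degrees would force total rank $\ge 8$'' is only a heuristic. Even granting that $H^*(M)$ is cyclic with $H^*(M)\cong L\otimes S/J$ for a parameter ideal $J$ (which the paper obtains from the spectral sequence computation in \cref{lem:parameteridealFromPerfectComplex}, not from Auslander--Buchsbaum), it does not automatically follow that $M$ is quasi-isomorphic to the Koszul complex of $J$: two minimal free DG-$S$-modules with isomorphic homology need not be quasi-isomorphic in general. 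What you need is precisely the content of the paper's \cref{lem:bijectionfourdimforM}: a $Q$-equivariant quasi-isomorphism $K\to M$ is constructed by hand, degree by degree in Koszul degree, and the crucial obstruction at Koszul degree two is killed because $H^{m+n+1}(M)=0$, which is a Poincar\'e duality statement for the complete intersection $S/J$. Without this construction (or an appeal to a Walker/Avramov--Buchweitz--Iyengar--Miller style rigidity theorem for rank-minimal perfect complexes, suitably adapted to the DG-module setting and made $Q$-equivariant), the assertion that $M$ is formal, and hence your entire injectivity/surjectivity argument, does not go through.

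A secondary point: in the paper the cyclicity of $H^*(M)$, the fact that $J$ is generated by a regular sequence, and the identity $t=m+n$ all come out of the collapsing spectral sequence of \cref{sec:spectralsequence_elementaryab}; your route via depth and Auslander--Buchsbaum is a clean alternative to that structural input, but it again relies on the unestablished rank-equals-sum-of-Betti-numbers identity to pin down $(1,2,1)$, so it inherits the same circularity. If you repair the formality claim, the rest of your outline is sound.
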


The bijection assigns to a perfect cochain complex $C^*$ with $\dim_\FF H^*(C^*)=4$, the degree $l$ of the lowest nonzero homology group, the $Q$-representation $\Ext^l_{\FF[P]}(\FF, C^*)$, and the annihilator ideal $J\subset \Ext_{\FF[P]}^*(\FF,\FF)$ of $\Ext^*_{\FF[P]}(\FF, C^*)$. Up to shifting, we may assume that $l=0$ and that the four basis elements of $H^*(C^*)$ have degrees $0\leq m\leq n \leq t$. We will show that $t=m+n$ and that the corresponding parameter ideal has parameters of degrees $m+1$ and $n+1$; see \cref{lem:parameteridealFromPerfectComplex}. In fact, $H^*(C^*)$ is isomorphic to $L$ tensored with the exterior algebra $\Lambda(\Sigma^{-1} J/(H^{>0}(BP)J))$ as graded $\FF[Q]$-modules.

The two main methods to establish \cref{thm:classification_four_dim_perfect_intro} hold in much greater generality. In \cref{sec:spectralsequence_extensions}, we extend the spectral sequence of \cite{ruepingstephan2022} from finite $p$-groups $P$ to finite extensions by $P$. We show in \cref{sec:spectralsequence_elementaryab} that the spectral sequence collapses for $C^*$ as in \cref{thm:classification_four_dim_perfect_intro} and establish that the annihilator ideal of $\Ext_{\FF[P]}^*(\FF, C^*)$ is a $Q$-equivariant parameter ideal.

The idea of constructing perfect complexes from parameter ideals in group cohomology is due to Benson and Carlson \cite[Theorem~4.1]{bensoncarlson1994}. Their method constructs perfect cochain complexes with trivial action on homology. In our situation, we also need perfect cochain complexes with nontrivial action on homology. 

In \cref{thm:equivariantBGG} we extend Carlsson's BGG correspondence for perfect cochain complexes over an exterior algebra $\Lambda$ over $\FF$ from \cite{carlsson1986} $Q$-equivariantly. Our formulation holds for any finite group $Q$, not just groups of odd order. 

We use the equivariant BGG correspondence to construct perfect cochain complexes and to establish the classification of \cref{thm:classification_four_dim_perfect_intro} in \cref{sec:classification_perfcomplexes}. This uses an identification of the $Q$-algebra $\FF[P]$ for a group of odd order $Q$ acting on $P=(\ZZ/2)^n$ with an exterior algebra $\Lambda(V)$ for a $Q$-representation $V$.

\cref{sec:crossedproductalgebras} on augmented crossed product algebras $A\ast_\gamma Q$ over a field $k$ provides a uniform treatment for augmented skew group algebras and algebras of group extensions to establish a $Q$-action on $\Ext^*_A(k,C^*)$ for cochain complexes $C^*$ over $A\ast_\gamma Q$.  

The cellular cochain complex of a finite, free $G$-CW complex is not just a perfect complex, but a finite, free cochain complex. In \cref{sec:finiteness_obstruction}, we use Wall's finiteness obstruction to determine which perfect complexes from \cref{thm:classification_four_dim_perfect_intro} are homotopy equivalent to a finite, free one. 

\begin{theorem}[see~\cref{thm:finitefreecomplexes}]\label{thm:finite_free_complex_intro} Let $\FF$ be a field of characteristic two and $C^*$ a perfect cochain complex over $\FF[A_4]$ with four-dimensional homology. Then $C^*$ is homotopy equivalent to a finite, free $\FF[A_4]$-cochain complex if and only if the corresponding parameter ideal $J$ has a $C_3$-invariant parameter of even degree.
\end{theorem}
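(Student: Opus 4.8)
The plan is to reduce the question to Wall's finiteness obstruction $\tilde{o}(C^*) \in \tilde{K}_0(\FF[A_4])$, which vanishes exactly when a perfect complex is homotopy equivalent to a finite, free one. So the proof has two halves: first compute $\tilde{K}_0(\FF[A_4])$ and identify the class of $C^*$ in it, and then translate the condition ``$\tilde{o}(C^*)=0$'' into the stated condition on the parameter ideal $J$. For the first half, I would use the classification of \cref{thm:classification_four_dim_perfect_intro}: since $C^*$ is built (via the equivariant BGG correspondence of \cref{thm:equivariantBGG}) from the data $(l,L,J)$, its finiteness obstruction should be computable directly from a minimal representative. Concretely, $C^*$ is perfect, hence quasi-isomorphic to a bounded complex of finitely generated projectives; the alternating sum of the classes $[P^i] \in \tilde{K}_0(\FF[A_4])$ of these projectives is $\tilde{o}(C^*)$, and this sum only depends on $\Ext^*_{\FF[P]}(\FF, C^*)$ and its $Q=C_3$-action, i.e.\ on $L$ and $J/(H^{>0}(BP)J)$, together with the degrees $0 \le m \le n \le m+n$ of the homology.

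Next I would pin down the relevant part of $\tilde{K}_0(\FF[A_4])$. Writing $A_4 = P \rtimes C_3$ with $P = \ZZ/2 \times \ZZ/2$, the group algebra $\FF[A_4]$ is a crossed product $\FF[P] \ast C_3$, and since $|C_3|$ is odd (invertible in $\FF$), projective $\FF[A_4]$-modules correspond to $C_3$-equivariant projective $\FF[P]$-modules. As $\FF[P]$ is local, the only indecomposable projective $\FF[P]$-module is $\FF[P]$ itself, so a $C_3$-equivariant projective is $\FF[P] \otimes_\FF W$ for a (projective, hence arbitrary) $\FF[C_3]$-module $W$; its class in $\tilde{K}_0$ is detected by the class of $W$ in the representation ring of $C_3$ modulo the regular representation. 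Over a field of characteristic two, $\FF[C_3]$-modules are governed by whether $\FF$ contains a primitive cube root of unity, but in any case $\tilde{K}_0(\FF[A_4])$ is a finite group of $2$-power-free order (essentially $\ZZ/3$ or a quotient thereof), detected by the ``Euler characteristic in the reduced representation ring of $C_3$'' of the minimal free resolution.

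The heart of the argument is then the bookkeeping that expresses this Euler characteristic in terms of $J$. From the equivariant BGG construction, a minimal free $\FF[A_4]$-complex realizing $(l=0, L, J)$ has the homology $H^*(C^*) \cong L \otimes \Lambda(\Sigma^{-1} \bar{J})$ where $\bar{J} = J/(H^{>0}(BP)J)$ is two-dimensional, spanned by (the classes of) parameters of degrees $m+1$ and $n+1$, on which $C_3$ acts through $\det$-type characters. One computes the alternating sum of the $\FF[C_3]$-module structures of the free generators in each cohomological degree; by minimality these are governed by the ranks of the differentials in the BGG/Koszul-type model, which in turn are dictated by the parameter ideal. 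The upshot should be that $\tilde{o}(C^*)$ depends only on the $C_3$-characters of the two parameters and the parities of their degrees $m+1$, $n+1$ — and that the sum telescopes to zero precisely when one of the two parameters can be chosen $C_3$-invariant of even degree. (If a parameter has odd degree, its $C_3$-character contributes a genuine nontrivial class; an even-degree $C_3$-invariant parameter contributes trivially and kills the obstruction.) The main obstacle is exactly this last step: making the dependence on $J$ explicit requires choosing a good $C_3$-equivariant minimal model from the BGG correspondence and carefully tracking the $\FF[C_3]$-module structure through the spectral sequence of \cref{sec:spectralsequence_extensions}, rather than just its dimension. I would handle it by first treating the split trivial-action case (recovering Benson--Carlson-type complexes, where the obstruction is classical), then bootstrapping to nontrivial $L$ and nontrivial $C_3$-action on $\bar{J}$ using that tensoring a complex with a one-dimensional $\FF[C_3]$-representation multiplies the finiteness obstruction by that representation's (trivial, since one-dimensional mod $2$ means trivial) class — so only the $C_3$-action on the two parameters, equivalently on $\bar{J}$, matters.
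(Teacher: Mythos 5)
Your framework is right — Wall's finiteness obstruction, the reduction $\tilde K_0(\FF[A_4]) \cong \tilde K_0(\FF[C_3])$ via the nilpotent kernel of $\FF[A_4]\to\FF[C_3]$, and the idea that the obstruction should be an Euler characteristic computable from the $C_3$-representation data $(L, J/(x_1,x_2)J)$. But there are two concrete errors and a missing step, and together they leave the crux of the argument undone.

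First, $\tilde K_0(\FF[A_4])$ is not a finite group. Since $\FF[C_3]$ is semisimple (characteristic $2$ is prime to $3$), $K_0(\FF[C_3])$ is free abelian of rank $2$ or $3$ depending on whether $X^2+X+1$ has a root in $\FF$, so $\tilde K_0(\FF[C_3])$ is $\ZZ$ or $\ZZ^2$, not ``$\ZZ/3$ or a quotient thereof.'' Relatedly, ``one-dimensional mod $2$ means trivial'' is false in general: if $\FF$ contains a primitive cube root of unity $\alpha$, then $C_3$ has three distinct one-dimensional representations $1, \alpha, \alpha^2$. Your conclusion that $L$ may be ignored is correct, but the correct reason is that $[L]$ is a unit in the ring $K_0(\FF[C_3])$, not that $L$ is trivial; and you cannot similarly discard the one-dimensional summands of $J/(x_1,x_2)J$, since those are exactly what decides the answer.

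Second, the actual computation is left as ``bookkeeping through a minimal model'' — but that is where all the content lies, and the route you propose is harder than what is available. The paper instead proves directly (without constructing a minimal model) that $pr_*(\chi(C^*))\cdot[\gr\FF[P]] = \chi(\res^{A_4}_{C_3}H^*(C^*))$ in $K_0(\FF[C_3])$, using that the Euler characteristic is additive across the filtration quotients from the spectral sequence of Section~3, together with semisimplicity of $\FF[C_3]$. Since $[\gr\FF[P]]$ is a non-zero-divisor and $\dim_\FF H^*(C^*)=4 < 12 = |A_4|$, the finiteness obstruction vanishes \emph{iff} $\chi(\res^{A_4}_{C_3}H^*(C^*))=0$, \emph{not} merely iff it lies in a subgroup — the small-homology bound is essential here and is absent from your proposal. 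Finally, the necessity direction requires showing that $(1+(-1)^{m_1}[L_1])(1+(-1)^{m_2}[L_2])\ne 0$ when neither factor is $1-[1]$, which amounts to checking that $2$, $1+\alpha$, $1+\alpha^2$, $1-2\alpha+\alpha^2$, etc.\ are nonzero (and, where needed, non-zero-divisors) in $\ZZ[\alpha]/(\alpha^3-1)$ resp.\ $\ZZ[V]/(V^2-V-2)$; your proposal gestures at this (``telescopes to zero precisely when\ldots'') but never establishes it, and that case analysis is the real content of the theorem.
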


The modular representation theory of $A_4$ depends on whether the polynomial $X^2+X+1$ is irreducible over $\FF$,  see \cite{dlabringel1989,bennetttennenhauscrawleyboevey2022}, and so does the proof of \cref{thm:finite_free_complex_intro} even though its statement does not. For instance, if $X^2+X+1$ is irreducible, then the only one-dimensional representation is the trivial representation.

For $\FF=\FF_2$ we can read off from the homology of $C^*$ whether it is homotopy equivalent to a finite, free one: A perfect $\FF_2[A_4]$-cochain complex $C^*$ such that its homology is four-dimensional with basis elements in degrees $0\leq m\leq n\leq t$ is homotopy equivalent to a finite, free cochain complex if and only if $m$ or $n$ is odd and $H^*(C^*)$ is a trivial $C_3$-representation, see \cref{cor:finitefreecomplexesforF2}.

\cref{thm:classification_four_dim_perfect_intro} for $\FF=\FF_2$ allows us to count the perfect cochain complexes with four-dimensional homology in fixed degrees by computing $Q$-invariant parameter ideals with parameters in corresponding degrees. For instance, we will compute in future work that up to quasi-isomorphism, there exist $9831$ perfect cochain complexes over $\FF_2[A_4]$ with four-dimensional homology in degrees $0\leq 21\leq 35\leq 56$. Each of these cochain complexes is homotopy equivalent to a finite, free one. A priori, it might seem hard to find a finite, free cochain complex that can not be realized topologically as the cochains on a finite, free $A_4$-CW complex. However, it turns out that it is actually harder to find the ones that can be realized topologically. None of the $9831$ perfect cochain complexes above can be realized topologically, since there does not exist a $C_3$-invariant parameter ideal with parameters of degrees $22$ and $36$ that is closed under Steenrod operations by \cite[Corollary~6.13]{ruepingstephanyalcin2022}. 

Moreover, we know by \cite[Corollary~6.13]{ruepingstephanyalcin2022} that for any degrees $m+1$ and $n+1$, there is at most one Steenrod closed parameter ideal with parameters of degrees $m+1$, $n+1$ in $H^*(BA_4;\FF_2)=H^*(BP;\FF_2)^{C_3}$ for $P=\ZZ/2\times \ZZ/2$. In combination with the current work, this fact has the unexpected consequence that given only the degrees of the nonzero cohomology groups of a finite, free $A_4$-CW complex with four-dimensional cohomology, we can produce its cellular cochain complex up to homotopy equivalence.

\begin{theorem}[see~\cref{thm:rigidity}] If there exists a finite, free $A_4$-CW complex $X$ with four-dimensional total cohomology $H^*(X;\FF_2)$ with a basis in degrees $0\leq m\leq n\leq t$, then its cellular cochain complex is determined by $m$ and $n$ up to homotopy. 
\end{theorem}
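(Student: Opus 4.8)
The plan is to recognize the cellular cochain complex of $X$ inside the classification of \cref{thm:classification_four_dim_perfect_intro} and then to pin down the associated triple using the uniqueness statement \cite[Corollary~6.13]{ruepingstephanyalcin2022}. Write $P=\ZZ/2\times\ZZ/2$ for the normal subgroup of $A_4$, $Q=C_3=A_4/P$, and let $C^*=C^*(X;\FF_2)$ be the cellular cochain complex. Since $X$ is a finite, free $A_4$-CW complex, $C^*$ is a bounded complex of finitely generated free $\FF_2[A_4]$-modules, hence perfect, with homology $H^*(X;\FF_2)$ of dimension four and basis in degrees $0\le m\le n\le t$. A quasi-isomorphism of bounded complexes of projective modules is a chain homotopy equivalence, so it suffices to show that the quasi-isomorphism class of $C^*$ is determined by $m$ and $n$. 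By \cref{cor:classification_four_dim_perfect} this class corresponds to the triple $(l,L,J)$, where $l$ is the lowest degree with $H^l(C^*)\ne0$, $L=\Ext^l_{\FF_2[P]}(\FF_2,C^*)$ as a $Q$-representation, and $J=\Ann_{H^*(BP;\FF_2)}\bigl(\Ext^*_{\FF_2[P]}(\FF_2,C^*)\bigr)$, using $\Ext^*_{\FF_2[P]}(\FF_2,\FF_2)=H^*(BP;\FF_2)$.

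Next I would interpret $l$, $L$, and $J$ topologically. As $X$ is a nonempty CW complex, $l=0$; and $X$ is connected, since otherwise a short case analysis using $A_4^{\mathrm{ab}}=\ZZ/3$ together with $t=m+n$ (see \cref{lem:parameteridealFromPerfectComplex}) forces a free $A_4$-action on a finite set of size not divisible by $12$ or on a finite-dimensional space with the mod $2$ cohomology of a point or of a sphere, both impossible (the latter by Smith theory \cite{smith1944}, as $P\le A_4$ has neither vanishing nor periodic mod $2$ cohomology). Since $X$ is a free $P$-CW complex, there is a standard identification $\Ext^*_{\FF_2[P]}(\FF_2,C^*)\cong H^*_P(X;\FF_2)=H^*(X/P;\FF_2)$ of graded modules over $\Ext^*_{\FF_2[P]}(\FF_2,\FF_2)=H^*(BP;\FF_2)$, compatibly with the residual $Q$-action, where the module structure on the right is pullback $c^*$ along a classifying map $c\colon X/P\to BP$ of the free $P$-space $X$. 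Connectedness of $X/P$ gives $L=H^0(X/P;\FF_2)=\FF_2$, the trivial $Q$-representation. Since $H^*(X/P;\FF_2)$ is a unital $H^*(BP;\FF_2)$-algebra, its annihilator in $H^*(BP;\FF_2)$ equals the kernel of its structure map, so $J=\ker c^*$; in particular $J$ is closed under the Steenrod operations, being the kernel of a morphism of unstable algebras over the Steenrod algebra.

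It then remains to show $J$ is determined by $m$ and $n$. By \cref{lem:parameteridealFromPerfectComplex} we have $t=m+n$ and $J$ is a $Q$-invariant parameter ideal of $H^*(BP;\FF_2)$ with parameters of degrees $m+1$ and $n+1$. Because $|Q|=3$ is invertible in $\FF_2$, the ideal $J$ corresponds to $J^Q=J\cap H^*(BA_4;\FF_2)$ of $H^*(BA_4;\FF_2)=H^*(BP;\FF_2)^Q$ via $J=J^Q\cdot H^*(BP;\FF_2)$, and $J^Q$ is again a parameter ideal with parameters of degrees $m+1$, $n+1$ that is closed under the Steenrod operations. By \cite[Corollary~6.13]{ruepingstephanyalcin2022} there is at most one Steenrod closed parameter ideal of $H^*(BA_4;\FF_2)$ with parameters of degrees $m+1$, $n+1$, so $J^Q$, and hence $J$, is determined by $m$ and $n$. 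Thus the triple $(0,\FF_2,J)$ is determined by $m$ and $n$, and with it the homotopy type of $C^*$.

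The step I expect to require the most care is the topological interpretation in the second paragraph: one must check that the algebraically defined module $\Ext^*_{\FF_2[P]}(\FF_2,C^*)$ agrees with $H^*(X/P;\FF_2)$ not merely additively but together with its $H^*(BP;\FF_2)$-module structure and its $Q$-action, so that the algebraic annihilator ideal $J$ really is the geometric kernel $\ker c^*$; and, relatedly, that the dictionary between $Q$-invariant parameter ideals of $H^*(BP;\FF_2)$ and parameter ideals of the invariant ring $H^*(BA_4;\FF_2)$ preserves both the parameter degrees and Steenrod-closedness, so that \cite[Corollary~6.13]{ruepingstephanyalcin2022} applies directly. The remaining ingredients — Steenrod-closedness of $\ker c^*$, the parameter degrees, and uniqueness — are then immediate from the cited results.
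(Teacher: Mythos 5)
Your overall strategy matches the paper's: reduce to the classification of \cref{cor:classification_four_dim_perfect}, identify the parameter ideal $J$ as the Steenrod-closed kernel of the classifying map $H^*(BP;\FF_2)\to H^*(X/P;\FF_2)$ (this is exactly \cref{lem:idealissteenrodclosed}), and then invoke \cite[Corollary~6.13]{ruepingstephanyalcin2022}. The identifications of $l=0$ and $L=\FF_2$ are also fine (and actually follow for free from the classification: over $\FF_2$ the only one-dimensional $C_3$-representation is trivial, and \cref{rem:whereAreTheSurvivingClasses} gives $\dim L = 1$ without any separate connectedness argument).

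The genuine gap is the passage between $J\subset H^*(BP;\FF_2)$ and $J^Q\subset H^*(BA_4;\FF_2)$. You assert ``because $|Q|=3$ is invertible in $\FF_2$, the ideal $J$ corresponds to $J^Q=J\cap H^*(BA_4;\FF_2)$ via $J=J^Q\cdot H^*(BP;\FF_2)$, and $J^Q$ is again a parameter ideal with parameters of degrees $m+1,n+1$.'' Neither of these is a consequence of invertibility of $|Q|$ alone: by the graded Nakayama lemma a parameter ideal $J$ is generated by $C_3$-invariant elements if and only if the two-dimensional $C_3$-representation $J/(x_1,x_2)J$ of minimal generators is trivial, and if $m=n$ this representation could a priori be the irreducible two-dimensional representation $V$, which has no nonzero invariants. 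In that case $J$ has no $C_3$-invariant parameters at all, $J^Q\cdot H^*(BP;\FF_2)\subsetneq J$, and $J^Q$ does not have parameters in degrees $m+1,n+1$. (A concrete illustration of the failure of $J=J^Q\cdot H^*(BP)$ for $Q$-invariant parameter ideals is $J=(x_1,x_2)$ itself.) This is precisely where the paper uses \cref{thm:finitefreecomplexes}: since $C^*(X;\FF_2)$ is not merely perfect but a \emph{finite, free} $\FF_2[A_4]$-complex, the finiteness obstruction vanishes, so by \cref{thm:finitefreecomplexes} the ideal $J$ has a $C_3$-invariant parameter; it follows that $J/(x_1,x_2)J$ has a nonzero invariant and hence, being semisimple of dimension two over $\FF_2$, is trivial, and the Reynolds operator produces a second invariant parameter. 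Only then do \cite[Lemma~2.3]{ruepingstephanyalcin2022} and \cite[Lemma~2.8]{ruepingstephanyalcin2022} give that the ideal $J'$ in $H^*(BA_4;\FF_2)$ generated by these two invariant parameters is a Steenrod-closed parameter ideal with the correct degrees whose extension back to $H^*(BP;\FF_2)$ is $J$. You record that $C^*$ is finite and free at the start, but never extract this consequence, and your closing paragraph only flags the dictionary $J\leftrightarrow J^Q$ as ``requiring care'' without supplying the argument. Inserting the appeal to \cref{thm:finitefreecomplexes} closes the gap and makes the proof coincide with the paper's.
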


\subsection*{Acknowledgments}
It is our pleasure to thank Dave Benson and  Bill Crawley-Boevey for helpful discussions. The research of Stephan was partially funded by the Deutsche Forschungsgemeinschaft (DFG, German Research Foundation) – Project-ID 491392403 – TRR 358.

\section{Augmented crossed product algebras}\label{sec:crossedproductalgebras}
Crossed product algebras are a common generalization of skew group algebras and group algebras of group extensions. A fitting reference for their representation theory is \cite{reitenriedtmann1985}. 
We extend the notion of crossed product algebras to augmented crossed product algebras  $A\ast_{\Psi,\gamma} Q$ for augmented algebras $A$ over a field $k$ and finite groups $Q$. For any cochain complex $C^*$ of right $A\ast_{\Psi,\gamma} Q$-modules, we establish a $Q$-action on $\Ext^*_A(k,C^*)$ and show that the action of $\Ext^*_A(k,k)$ on $\Ext^*_A(k,C^*)$ is $Q$-equivariant.
Similar results for skew group algebras are in \cite{martinez2001} and generalizations thereof to smash product algebras for Hopf algebra actions are in \cite{hevanoystaeyenzhang2011}.
\begin{definition}\label{def:crossedAxioms}
Let $k$ be a field and $(A,\aug\colon A\to k)$ an augmented $k$-algebra. Let $Q$ be a finite group together with a function $\Psi\colon Q\to \Aut(A)$ that assigns to each $q\in Q$ an automorphism $\Psi(q)\colon A\to A$ of augmented algebras. 
Let $\gamma\colon Q\times Q\to U(A)$ be a map to the units of $A$ such that
\begin{enumerate}
    \item\label{crossedAxiomsi} $\gamma(q,q')\gamma(qq',q'')= \Psi(q)(\gamma(q',q''))\gamma(q,q'q'')$ for any $q,q',q''\in Q$, \item\label{crossedAxiomsii}  $\gamma(e, q) = 1 = \gamma(q, e)$ for any $q \in Q$ and $e$ the neutral element of $Q$,
    \item\label{crossedAxiomsiii} $\gamma(q, q') \Psi(qq')(a) = \Psi(q)(\Psi(q')(a)) \gamma(q, q')$ for any $q,q'\in Q$ and $a\in A$,
    \item \label{crossedAxiomsiv} $\aug(\gamma(q,q'))=1$ for all $q,q'\in Q$.
\end{enumerate}
The crossed product algebra $A\ast_{\Psi,\gamma} Q$ (or short $A\ast_{\gamma} Q$) is the $k$-algebra given as a vector space by the free $A$-module on the basis $(\overline{q})_{q\in Q}$ with multiplication
\[a\overline{q}a'\overline{q'} =a\Psi(q)(a')\gamma(q,q')\overline{qq'}.\]
It is augmented by $a\overline{q}\mapsto \aug(a)$.
\end{definition}
We do not require $\Psi$ to be a group homomorphism.
\begin{example}
    If $\Psi$ is a group homomorphism and $\gamma$ constant with value $1$, then $A\ast_{\Psi,\gamma} Q$ is the ordinary skew group algebra $A\ast Q$ of the $Q$-action on $A$.
\end{example}

\begin{example}\label{ex:sesisCrossedProduct}
    Let $N\to G\to Q$ be a short exact sequence of groups and let $s\colon Q\to G$ be a section of sets that preserves the neutral element, i.e., such that $s(e)=e$. For the group ring $A=k[N]$ and the choice of $s$, define
    $$\gamma(q,q')=s(q)s(q')s(qq')^{-1}\quad \text{and}\quad \Psi(q)(a) = s(q)as(q)^{-1}\,.$$
    It is elementary to show that $\gamma$ and $\Psi$ satisfy the conditions of \cref{def:crossedAxioms}. Moreover, we obtain an isomorphism $k[N]\ast_{\Psi,\gamma}Q \to k[G]$ by extending the identity on $k[N]$ via $\overline{q}\mapsto s(q)$.

    If $N\to G\to Q$ splits and $s$ is a section of groups, then this isomorphism specializes to $k[N]\ast Q\cong k[N\rtimes Q]$ writing the group algebra of a semidirect product as a skew-group algebra.
\end{example}

We recall basic identities and properties.
\begin{lemma}\label{lem:crossedsimpleidentities}
   Let $A\ast_{\Psi,\gamma} Q$ be a crossed product algebra. Then
    \begin{enumerate}
        \item \label{crossedsimpleidentitiesi}$\Psi(e) = \id$, 
        \item \label{crossedsimpleidentitiesii}$\overline{e}$ is the neutral element in $A\ast_\gamma Q$,
        \item \label{crossedsimpleidentitiesiii} and each $\overline{q}$ is a unit in $A\ast_\gamma Q$ 
        with inverse $\gamma(q^{-1},q)^{-1}\overline{q^{-1}}=\overline{q^{-1}}\gamma(q,q^{-1})^{-1}$.
    \end{enumerate}
\end{lemma}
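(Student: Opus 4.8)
The plan is to prove the three assertions in turn, each by a short direct computation from the multiplication rule $a\overline{q}\,a'\overline{q'}=a\,\Psi(q)(a')\,\gamma(q,q')\,\overline{qq'}$, feeding in the axioms on $\gamma$ and $\Psi$ from \cref{def:crossedAxioms}.

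For the first claim, I would specialize the third axiom $\gamma(q,q')\Psi(qq')(a)=\Psi(q)(\Psi(q')(a))\gamma(q,q')$ to $q=q'=e$. Since $\gamma(e,e)=1$, it collapses to $\Psi(e)(a)=\Psi(e)(\Psi(e)(a))$ for all $a\in A$, that is, $\Psi(e)=\Psi(e)\circ\Psi(e)$; as $\Psi(e)\in\Aut(A)$ is invertible, cancelling yields $\Psi(e)=\id$. For the second claim, since every element of $A\ast_\gamma Q$ is an $A$-linear combination of the $\overline{q}$, it is enough to see that $\overline{e}$ acts as a unit on each $a\overline{q}$: the multiplication rule gives $a\overline{q}\cdot\overline{e}=a\,\Psi(q)(1)\,\gamma(q,e)\,\overline{q}=a\overline{q}$ because $\Psi(q)$ is a unital map and $\gamma(q,e)=1$, and $\overline{e}\cdot a\overline{q}=\Psi(e)(a)\,\gamma(e,q)\,\overline{q}=a\overline{q}$ by the first claim together with $\gamma(e,q)=1$.

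For the third claim, the single computational input is the identity
\[
\gamma(q,q^{-1})=\Psi(q)\bigl(\gamma(q^{-1},q)\bigr),
\]
which I would derive by setting $q'=q^{-1}$, $q''=q$ in the cocycle identity $\gamma(q,q')\gamma(qq',q'')=\Psi(q)(\gamma(q',q''))\gamma(q,q'q'')$ and invoking $\gamma(e,q)=1=\gamma(q,e)$. Granting it, the multiplication rule gives $\overline{q}\cdot\bigl(\gamma(q^{-1},q)^{-1}\overline{q^{-1}}\bigr)=\Psi(q)\bigl(\gamma(q^{-1},q)\bigr)^{-1}\gamma(q,q^{-1})\,\overline{e}=\overline{e}$ and $\bigl(\gamma(q^{-1},q)^{-1}\overline{q^{-1}}\bigr)\cdot\overline{q}=\gamma(q^{-1},q)^{-1}\gamma(q^{-1},q)\,\overline{e}=\overline{e}$, so $\overline{q}$ is a unit with inverse $\gamma(q^{-1},q)^{-1}\overline{q^{-1}}$. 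To see that this inverse also equals $\overline{q^{-1}}\gamma(q,q^{-1})^{-1}$, I would either rewrite $\overline{q^{-1}}\gamma(q,q^{-1})^{-1}=\Psi(q^{-1})\bigl(\gamma(q,q^{-1})^{-1}\bigr)\,\overline{q^{-1}}$ and apply the displayed identity with $q$ replaced by $q^{-1}$, or observe that $\overline{q^{-1}}\gamma(q,q^{-1})^{-1}$ is a right inverse of $\overline{q}$ (from $\overline{q}\cdot\overline{q^{-1}}=\gamma(q,q^{-1})\,\overline{e}$ and the first claim) and invoke uniqueness of two-sided inverses in a unital associative algebra.

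There is no real obstacle here; the only step needing a moment's thought is extracting $\gamma(q,q^{-1})=\Psi(q)(\gamma(q^{-1},q))$ from the cocycle condition, after which the third claim follows immediately.
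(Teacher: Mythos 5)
Your proofs of the first two parts coincide with the paper's. For part (iii) you take a slightly more computational route: you first extract the identity $\gamma(q,q^{-1})=\Psi(q)(\gamma(q^{-1},q))$ from the cocycle condition and then use it to verify directly that $\gamma(q^{-1},q)^{-1}\overline{q^{-1}}$ is a two-sided inverse of $\overline{q}$. The paper instead avoids the cocycle identity entirely for this part: it checks by a one-line computation that $\gamma(q^{-1},q)^{-1}\overline{q^{-1}}$ is a \emph{left} inverse and that $\overline{q^{-1}}\gamma(q,q^{-1})^{-1}$ is a \emph{right} inverse, and then appeals to the standard fact that in a unital associative algebra a left and a right inverse of the same element must agree — which simultaneously yields that $\overline{q}$ is a unit and that the two candidate expressions for the inverse coincide. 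Your second option at the end of your argument (uniqueness of two-sided inverses) is in fact the paper's whole strategy; adopting it from the start would let you drop the derivation of $\gamma(q,q^{-1})=\Psi(q)(\gamma(q^{-1},q))$ altogether. Both arguments are correct; the paper's is just leaner.
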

\begin{proof}
We have  $\Psi(e)(a)= \Psi(e)(\Psi(e)(a))$ for any $a\in A$ by \cref{def:crossedAxioms}\eqref{crossedAxiomsiii} and \cref{def:crossedAxioms}\eqref{crossedAxiomsii}. Since $\Psi(e)$ is an isomorphism, it follows that $a = \Psi(e)(a)$, showing \eqref{crossedsimpleidentitiesi}.

We have $(a\overline{q})\overline{e}=a\gamma(q,e)\overline{qe}=a\overline{q}$ and 
$\overline{e}a\overline{q} = \Psi(e)(a)\gamma(e,q)\overline{eq} = a\overline{q}$ showing \eqref{crossedsimpleidentitiesii}.

The left inverse of $\overline{q}$ is $\gamma(q^{-1},q)^{-1}\overline{q^{-1}}$ since
$$\gamma(q^{-1},q)^{-1}\overline{q^{-1}}\overline{q}=\gamma(q^{-1},q)^{-1}\gamma(q^{-1},q)\overline{e}=\overline{e}.$$ The right inverse is $\overline{q^{-1}}\gamma(q,q^{-1})^{-1}$ as
$$\overline{q}\overline{q^{-1}}\gamma(q,q^{-1})^{-1}=\gamma(q,q^{-1})\overline{e}\gamma(q,q^{-1})^{-1}=\overline{e}.$$ Hence the two are equal and the inverse of $\overline{q}$. This shows \eqref{crossedsimpleidentitiesiii}.
\end{proof}

The following two lemmas are well-known for skew group algebras. We consider elements in degree $n$ of a hom complex as homomorphisms that are homogeneous of degree $n$.

\begin{lemma}\label{lem:homcomplexQmodule}
    Let $C^*,D^*,E^*$ be right $A\ast_\gamma Q$-cochain complexes. The hom complex $\Hom_A(C^*,D^*)$ is a cochain complex of right $k[Q]$-modules via
    $$(fq)(x)\coloneqq f(x\overline{q}^{-1})\overline{q}\,.$$
   Moreover, composition $\Hom_A(D^*,E^*) \otimes_k \Hom_A(C^*,D^*)  \to \Hom_A(C^*,E^*)$ is $Q$-linear with respect to the diagonal action on the tensor product.
\end{lemma}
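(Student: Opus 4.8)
The statement has two parts: first, that the formula $(fq)(x) := f(x\overline{q}^{-1})\overline{q}$ makes $\Hom_A(C^*, D^*)$ into a cochain complex of right $k[Q]$-modules; second, that composition is $Q$-linear for the diagonal action.

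My plan is to verify each required property directly from the axioms in Definition \ref{def:crossedAxioms} and the identities of Lemma \ref{lem:crossedsimpleidentities}. The steps are as follows.

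\textbf{Step 1: $fq$ is an $A$-linear map of the correct degree.} Given $f \in \Hom_A(C^*, D^*)$ homogeneous of degree $n$, I need $fq$ to be homogeneous of degree $n$ (clear, since neither $x \mapsto x\overline{q}^{-1}$ nor right multiplication by $\overline{q}$ changes the cochain degree, as the $Q$-action preserves the complex grading) and right $A$-linear. For $A$-linearity, compute $(fq)(xa) = f(xa\overline{q}^{-1})\overline{q}$. Using $a\overline{q}^{-1} = \overline{q}^{-1}\Psi(q)(a')$ — more precisely, I rewrite $xa\overline{q}^{-1} = x\overline{q}^{-1}\big(\overline{q}a\overline{q}^{-1}\big)$ and note $\overline{q}a\overline{q}^{-1} = \Psi(q)(a)\cdot(\text{a central-ish unit})$; after applying $f$ (which is $A$-linear) and right-multiplying by $\overline{q}$, the extra unit factors recombine to give $f(x\overline{q}^{-1})\overline{q}a = (fq)(x)a$. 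This is the routine bookkeeping I will not fully grind through, but it uses only Definition \ref{def:crossedAxioms}\eqref{crossedAxiomsiii} and Lemma \ref{lem:crossedsimpleidentities}\eqref{crossedsimpleidentitiesiii}.

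\textbf{Step 2: the $k[Q]$-module axioms.} That $f e = f$ is immediate from Lemma \ref{lem:crossedsimpleidentities}\eqref{crossedsimpleidentitiesii}. For associativity $((fq)q')(x) = (f(qq'))(x)$: unwinding, $((fq)q')(x) = (fq)(x\overline{q'}^{-1})\overline{q'} = f(x\overline{q'}^{-1}\overline{q}^{-1})\overline{q}\,\overline{q'}$, and I must match this with $f(x\overline{qq'}^{-1})\overline{qq'}$. The point is that $\overline{q'}^{-1}\overline{q}^{-1}$ and $\overline{qq'}^{-1}$ differ by a unit $u \in U(A)$ with $\aug(u)=1$ (coming from $\gamma$), and correspondingly $\overline{q}\,\overline{q'} = \gamma(q,q')\overline{qq'}$; since $f$ is $A$-linear, the factor $u$ passes through $f$ and cancels against $\gamma(q,q')$ after one more application of \eqref{crossedsimpleidentitiesiii} and axiom \eqref{crossedAxiomsi}. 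This cocycle-cancellation is the only place any real care is needed.

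\textbf{Step 3: compatibility with the differential.} The differential on $\Hom_A(C^*,D^*)$ is $\partial f = d_D \circ f - (-1)^{|f|} f \circ d_C$; since $d_C$ and $d_D$ are maps of $A\ast_\gamma Q$-modules, in particular right-$A$-linear and commuting with the operators $x \mapsto x\overline{q}^{-1}$ and $y \mapsto y\overline{q}$, one checks $\partial(fq) = (\partial f)q$ by substitution. This makes $\Hom_A(C^*,D^*)$ a cochain complex of $k[Q]$-modules.

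\textbf{Step 4: $Q$-linearity of composition.} For $g \in \Hom_A(D^*,E^*)$, $f \in \Hom_A(C^*,D^*)$, I must show $(g \circ f)q = (gq)\circ(fq)$. Compute $((gq)\circ(fq))(x) = (gq)\big(f(x\overline{q}^{-1})\overline{q}\big)$. Now $(gq)$ applied to $y\overline{q}$ is, by definition, $g(y\overline{q}\,\overline{q}^{-1})\overline{q} = g(y)\overline{q}$ — here I use that $gq$ is $A$-linear in the sense just established, or more directly expand the definition — so this equals $g(f(x\overline{q}^{-1}))\overline{q} = ((g\circ f)q)(x)$. The key input is that inserting $\overline{q}^{-1}$ then $\overline{q}$ in the middle cancels exactly, which is Lemma \ref{lem:crossedsimpleidentities}\eqref{crossedsimpleidentitiesiii}.

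\textbf{Main obstacle.} There is no deep obstacle; the lemma is ``well-known for skew group algebras'' and the content here is purely that the $2$-cocycle $\gamma$ does not disturb anything because its values are units that are $A$-linearly absorbed and have augmentation $1$. The one spot demanding genuine attention is Step 2 (and the analogous cancellations in Steps 1 and 4): one must track the $\gamma$-factors carefully and invoke the cocycle identity \eqref{crossedAxiomsi} together with the explicit inverse formula in Lemma \ref{lem:crossedsimpleidentities}\eqref{crossedsimpleidentitiesiii} so that left- and right-hand sides match on the nose rather than merely up to a unit.
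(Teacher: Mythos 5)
Your proposal follows essentially the same route as the paper's proof: direct verification of $A$-linearity by inserting $\overline{q}^{-1}\overline{q}$, the $Q$-action axiom by pulling the unit $\gamma(q,q')$ through $f$ via $A$-linearity, compatibility with the differential, and the composition identity by cancelling $\overline{q}\,\overline{q}^{-1}$ in the middle. One small correction: the computation in your Step~1 needs no ``central-ish unit'' correction at all, since $\overline{q}a\overline{q}^{-1}=\Psi(q)(a)$ exactly, and neither the cocycle axiom \eqref{crossedAxiomsi} nor the augmentation condition \eqref{crossedAxiomsiv} is actually invoked anywhere in this lemma — only $A$-linearity of $f$, the multiplication rule, and the unit property of the $\overline{q}$ are needed.
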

\begin{proof}
The map $fq$ is right $A$-linear since
\begin{align*}
(fq)(xa)=&f(x\overline{q}^{-1}\overline{q}a\overline{q}^{-1})\overline{q}
=f(x\overline{q}^{-1}\Psi(q)(a))\overline{q}\\
=&f(x\overline{q}^{-1})\Psi(q)(a)\overline{q}
=f(x\overline{q}^{-1})\overline{q}a
=(fq)(x)a\,.
\end{align*}
The formula defines a $Q$-action as
\begin{align*}
    (f(qq'))(x) =& f(x\overline{qq'}^{-1})\overline{qq'}
     = f(x(\gamma(q,q')^{-1}\overline{q}\overline{q'})^{-1})\gamma(q,q')^{-1}\overline{q}\overline{q'}\\
     =& f(x\overline{q'}^{-1}\overline{q}^{-1}\gamma(q,q'))\gamma(q,q')^{-1}\overline{q}\overline{q'}
     = f(x\overline{q'}^{-1}\overline{q}^{-1})\overline{q}\overline{q'}
     =((fq)q')(x)\,.
\end{align*}
We verify that the differential $d_{\Hom}$ on $\Hom_A(C^*,D^*)$ is $Q$-linear:
\begin{align*}
    &(d_{\Hom}(f)q)(x)\\
    =&((d_D \circ f)-(-1)^{|f|}f\circ d_C)q)(x)
    =d_D(f(x\overline{q}^{-1}))\overline{q}    
    -(-1)^{|f|}f(d_C(x\overline{q}^{-1}))\overline{q} \\
    = &d_D(f(x\overline{q}^{-1})\overline{q}) -(-1)^{|f|}f(d_C(x)\overline{q}^{-1})\overline{q}
    =d_D((fq)(x))-(-1)^{|f|}(fq)(d_C(x))\\
    =&d_{\Hom}(fq)(x)\,.
\end{align*}
Finally, we get for the composition of two composable morphisms $f,f'$:
$$
(fq\circ f'q)(x) = f(f'(x\overline{q}^{-1})\overline{q}\;\overline{q}^{-1})\overline{q}
=f(f'(x\overline{q}^{-1}))\overline{q}
=((f'\circ f)q)(x)\qedhere
$$
\end{proof}

\begin{lemma}\label{lem:tensoroverAisQmodule}
    Let $C^*$ be a right $A\ast_\gamma Q$ cochain complex and $D^*$ be a left $A\ast_\gamma Q$ cochain complex. Then $C^*\otimes_A D^*$ is a cochain complex of right $k[Q]$-modules with $Q$-action given by
    $(x\otimes y)q = x\overline{q}\otimes \overline{q}^{-1}y$.
\end{lemma}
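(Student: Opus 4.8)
The plan is to verify directly that the formula $(x\otimes y)q = x\overline{q}\otimes \overline{q}^{-1}y$ is well-defined on the tensor product $C^*\otimes_A D^*$, that it defines a right $Q$-action, and that it is compatible with the differential. All three points are short calculations that parallel the proof of \cref{lem:homcomplexQmodule}, using the identities from \cref{lem:crossedsimpleidentities} for manipulating $\overline{q}$.

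First I would check well-definedness: the tensor product over $A$ is the quotient of $C^*\otimes_k D^*$ by the relation $xa\otimes y = x\otimes ay$ for $a\in A$. Applying the proposed formula to $xa\otimes y$ gives $xa\overline{q}\otimes\overline{q}^{-1}y$, and since $a\overline{q}=\overline{q}\,\Psi(q)^{-1}(a)$ inside $A\ast_\gamma Q$ (equivalently $\overline{q}^{-1}a\overline{q}=\Psi(q)^{-1}(a)$, which follows from \cref{def:crossedAxioms}\eqref{crossedAxiomsiii} after moving $\gamma$'s around, or simply from $\overline{q}b = \Psi(q)(b)\overline{q}$), this equals $x\overline{q}\,\Psi(q)^{-1}(a)\otimes\overline{q}^{-1}y = x\overline{q}\otimes\Psi(q)^{-1}(a)\overline{q}^{-1}y = x\overline{q}\otimes\overline{q}^{-1}ay$, which is the image of $x\otimes ay$. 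So the map descends to $C^*\otimes_A D^*$; the main bookkeeping obstacle is just getting the conjugation formula $\overline{q}^{-1}a\overline{q}$ correct, which I expect to be $\Psi(q^{-1})(\gamma(q^{-1},q)^{-1}a\gamma(q^{-1},q))$ or similar — one has to be a little careful since $\Psi$ is not assumed multiplicative, but any such formula suffices because it lands back in $A$ and the same manipulation pushes it across the tensor.

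Next I would check the $Q$-action axioms. The unit $\overline{e}$ acts as the identity since $\overline{e}=1$ by \cref{lem:crossedsimpleidentities}\eqref{crossedsimpleidentitiesii}. For composition, $((x\otimes y)q)q' = (x\overline{q}\otimes\overline{q}^{-1}y)q' = x\overline{q}\,\overline{q'}\otimes\overline{q'}^{-1}\overline{q}^{-1}y$, and since $\overline{q}\,\overline{q'} = \gamma(q,q')\overline{qq'}$ with inverse $\overline{qq'}^{-1}\gamma(q,q')^{-1} = \overline{q'}^{-1}\overline{q}^{-1}$, we can absorb the unit $\gamma(q,q')\in A$ across the tensor: $x\gamma(q,q')\overline{qq'}\otimes\overline{qq'}^{-1}\gamma(q,q')^{-1}y$ — wait, more directly $x\overline{q}\,\overline{q'} = x\gamma(q,q')\overline{qq'}$ and $\overline{q'}^{-1}\overline{q}^{-1} = \overline{qq'}^{-1}\gamma(q,q')^{-1}$, so the expression is $x\gamma(q,q')\overline{qq'}\otimes\overline{qq'}^{-1}\gamma(q,q')^{-1}y = x\overline{qq'}\otimes\gamma(q,q')^{-1}\gamma(q,q')\,\overline{qq'}^{-1}y$? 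That is not quite balanced; the honest computation moves $\gamma(q,q')$ from the right factor of the left slot across to the left factor of the right slot, giving $x\overline{qq'}\otimes \overline{qq'}^{-1}y = (x\otimes y)(qq')$ after the $\gamma$'s cancel. (This is exactly the analogue of the corresponding step for $\Hom_A$ in \cref{lem:homcomplexQmodule}.)

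Finally, $Q$-linearity of the differential $d_{C\otimes D}(x\otimes y) = d_C(x)\otimes y + (-1)^{|x|} x\otimes d_D(y)$ is immediate because right multiplication by $\overline{q}$ on $C^*$ and left multiplication by $\overline{q}^{-1}$ on $D^*$ are chain maps (being module actions of a cochain complex of modules), so they commute with the differentials degreewise and the Leibniz rule is preserved; no signs are disturbed since $\overline{q}$ is homogeneous of degree $0$. Assembling these three verifications gives the lemma. The only genuine subtlety throughout is the non-multiplicativity of $\Psi$, but it never actually interferes: every place where one would want $\Psi(qq') = \Psi(q)\Psi(q')$, the cocycle identity \cref{def:crossedAxioms}\eqref{crossedAxiomsi} and \eqref{crossedAxiomsiii} supply exactly the correcting factor of $\gamma$, and that factor is a unit of $A$ that can be slid across $\otimes_A$ harmlessly.
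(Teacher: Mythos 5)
Your proposal is correct and follows essentially the same route as the paper: a direct verification of well-definedness across $\otimes_A$ using $a\overline{q}=\overline{q}\,\Psi(q)^{-1}(a)$, of the group action law by expressing $\overline{q}\,\overline{q'}$ via the cocycle $\gamma(q,q')$ and sliding a unit of $A$ across the tensor, and of compatibility with the differential by noting that $\overline{q}$ acts by degree-zero chain maps. The brief hesitation in your action-law computation resolves exactly as the paper does (move $\gamma(q,q')$ past $\overline{qq'}$ using $\Psi(qq')^{-1}$, slide across $\otimes_A$, then cancel), so no gap results.
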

\begin{proof}
The formula for the action of $Q$ is well-defined as 
\begin{align*}    
    xa\overline{q}\otimes_A \overline{q}^{-1}y=&
    x\overline{q}\Psi(q)^{-1}(a)\otimes_A \overline{q}^{-1}y   =x\overline{q}\otimes_A \Psi(q)^{-1}(a)\overline{q}^{-1}y\\
    =&x\overline{q}\otimes_A \overline{q}^{-1}\; \overline{q}\Psi(q)^{-1}(a)\overline{q}^{-1}y
    =x\overline{q}\otimes_A \overline{q}^{-1}ay\,.
\end{align*}
It is indeed a group action since
\begin{align*}
    ((x\otimes_A y)q)q'
    =&x\overline{q} \cdot \overline{q'}\otimes_A \overline{q'}^{-1} \cdot \overline{q}^{-1}y
    =x\gamma(q,q')\overline{qq'}\otimes_A \overline{qq'}^{-1} \gamma(q,q')^{-1}y\\
    =&x\overline{qq'}\Psi(qq')^{-1}(\gamma(q,q'))\otimes_A \Psi(qq')^{-1}(\gamma(q,q')^{-1})\overline{qq'}^{-1} y\\
    =&x\overline{qq'}\otimes_A \overline{qq'}^{-1} y = (x \otimes_A y)(qq')\,.
\end{align*}
We verify that the differential $d_{\otimes}$ on $C^*\otimes_A D^*$ is $Q$-linear:
\begin{align*}
(d_{\otimes}(x\otimes_A y))q
=&(d_C(x)\otimes_A y+(-1)^{|x|}x\otimes_A d_D(y))q\\
=&d_C(x)\overline{q}\otimes_A \overline{q}^{-1}y+(-1)^{|x|}x\overline{q}\otimes_A \overline{q}^{-1}d_D(y)\\
=&d_C(x\overline{q})\otimes_A \overline{q}^{-1}y+(-1)^{|x|}x\overline{q}\otimes_A d_D(\overline{q}^{-1}y)\\
=&d_\otimes(x\overline{q}\otimes_A \overline{q}^{-1}y)
=d_\otimes((x\otimes_A y)q)\qedhere
\end{align*}
\end{proof}

The augmentation on $A\ast_{\gamma}Q$ equips $k$ with an $A\ast_{\gamma}Q$-module structure.

\begin{proposition}\label{lem:equivariantactionOfExt}
For any cochain complex $C^*$ of right $A\ast_\gamma Q$-modules, the graded vector space
$\Ext_A^*(k,C^*)$ has an induced right $Q$-action.
With this action, the map 
\[\Ext_A^*(k,C^*)\otimes_k \Ext_A^*(k,k)\to \Ext_A^*(k,C^*) \]
is a map of graded right $Q$-modules.
\end{proposition}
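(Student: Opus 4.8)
## Proof proposal for Proposition~\ref{lem:equivariantactionOfExt}

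The plan is to compute $\Ext^*_A(k,-)$ using a fixed projective resolution of $k$ as a right $A$-module and to transport the $Q$-action from the hom complex established in \cref{lem:homcomplexQmodule}. Concretely, choose a projective resolution $P_\bullet \to k$ of the trivial right $A$-module $k$; then $\Ext^*_A(k,C^*)$ is the cohomology of the total complex of $\Hom_A(P_\bullet, C^*)$. By \cref{lem:homcomplexQmodule} with $D^*=C^*$ and the resolution $P_\bullet$ (regarded as a complex of right $A$-modules with zero $Q$-action, which is fine since $\overline{q}$ only enters through the target), this hom complex is a cochain complex of right $k[Q]$-modules, so its cohomology inherits a right $Q$-action. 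The first point to check is that this action is \emph{independent of the chosen resolution}: any two projective resolutions are chain homotopy equivalent over $A$, and one must verify that such a comparison equivalence induces a $Q$-equivariant isomorphism on cohomology up to the $Q$-action. This follows because the comparison maps and homotopies can be chosen $A$-linearly and the $Q$-action on $\Hom_A(-,C^*)$ is functorial in the first variable through $A$-linear maps; alternatively, one invokes that $\Ext^*_A(k,C^*)$ is computed by $\mathbf{R}\Hom_A(k,C^*)$ in the derived category and the $Q$-action is the one coming from $k$ and $C^*$ being objects in the category of $A\ast_\gamma Q$-modules. I would phrase this cleanly by noting that $k$ itself is a right $A\ast_\gamma Q$-module via the augmentation, so $\Hom_A(k, C^*)$ already carries the $Q$-action of \cref{lem:homcomplexQmodule}, and one resolves $k$ by projectives \emph{as an $A$-module} equipped with the induced $Q$-action on the hom complex.

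For the multiplicativity statement, recall that the product $\Ext^*_A(k,C^*)\otimes_k \Ext^*_A(k,k)\to \Ext^*_A(k,C^*)$ is induced by composition: given a projective resolution $P_\bullet\to k$, one lifts a cocycle $P_\bullet \to \Sigma^j k$ to a chain map $P_\bullet \to \Sigma^j P_\bullet$ and composes with a cocycle $P_\bullet \to \Sigma^i C^*$. The key is that all of \cref{lem:homcomplexQmodule} is available: it states precisely that composition $\Hom_A(D^*,E^*)\otimes_k \Hom_A(C^*,D^*)\to \Hom_A(C^*,E^*)$ is $Q$-linear for the diagonal action. Applying this with $C^*=D^*=P_\bullet$ (or its shifts) and $E^*=C^*$ gives that the chain-level composition pairing is $Q$-equivariant, hence so is the induced pairing on cohomology. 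The one subtlety is that lifting a cocycle $P_\bullet\to \Sigma^j k$ through the augmentation $P_\bullet \to k$ to a self-map of $P_\bullet$ is only well-defined up to homotopy and need not be $Q$-equivariant on the nose; but since we only need the \emph{induced} map on $\Ext$, and the $Q$-action on $\Ext^*_A(k,k)$ is defined via the same hom complex, the ambiguity is absorbed — the composition pairing on cohomology is well-defined independent of the choice of lift, and $Q$-equivariant because the underlying chain-level pairing is.

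The main obstacle I anticipate is purely bookkeeping: making precise the sense in which $P_\bullet$ ``carries a $Q$-action'' so that \cref{lem:homcomplexQmodule} literally applies. The cleanest fix is to not put a $Q$-action on $P_\bullet$ at all, but instead to observe that $\Hom_A(P_\bullet, C^*)$ is a quotient/sub of $\Hom_A(P_\bullet, C^*)$ with the $Q$-action $(fq)(x) = f(x\overline q^{-1})\overline q$ valid \emph{verbatim} — the derivation in \cref{lem:homcomplexQmodule} that $fq$ is $A$-linear uses only that $C^*$ is a right $A\ast_\gamma Q$-module and that the source is a right $A$-module, never that the source is a $Q$-module. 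So the formula $(fq)(x)=f(x\overline q^{-1})\overline q$ makes sense for $f\in \Hom_A(P_n, C^*)$ with $P_n$ merely an $A$-module, and the same computations show it is a $Q$-action commuting with $d_{\Hom}$. Passing to cohomology gives the $Q$-action on $\Ext^*_A(k,C^*)$, resolution-independence follows from functoriality in $P_\bullet$ through $A$-chain maps, and the multiplicative compatibility is then exactly the composition statement of \cref{lem:homcomplexQmodule}. I would also remark that taking $C^*=k$ recovers a right $Q$-action on $\Ext^*_A(k,k)$ making it a graded ring with $Q$ acting by (graded) ring automorphisms, which is what gives meaning to ``map of graded right $Q$-modules'' in the statement.
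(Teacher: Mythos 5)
Your proof has a genuine gap at the central step. You claim that the formula $(fq)(x)=f(x\overline{q}^{-1})\overline{q}$ ``makes sense for $f\in \Hom_A(P_n, C^*)$ with $P_n$ merely an $A$-module,'' and that the derivation of $A$-linearity ``never [uses] that the source is a $Q$-module.'' Both assertions are false: the element $\overline{q}^{-1}$ lies in $A\ast_\gamma Q$, not in $A$, so the expression $x\overline{q}^{-1}$ is meaningless unless $P_n$ carries a right $A\ast_\gamma Q$-module structure. Moreover the $A$-linearity check in \cref{lem:homcomplexQmodule} explicitly rewrites $f(xa\overline{q}^{-1}) = f(x\overline{q}^{-1}\,\overline{q}a\overline{q}^{-1})$, which moves $\overline{q}$'s past elements of the source and requires the source to be a module over the larger ring. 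Your earlier parenthetical ``regarded as a complex of right $A$-modules with zero $Q$-action, which is fine since $\overline{q}$ only enters through the target'' is similarly wrong: $\overline{q}^{-1}$ acts on the source argument.

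The correct fix, and what the paper does, is to resolve $k$ by projective \emph{right $A\ast_\gamma Q$-modules}. This kills both birds at once: $A\ast_\gamma Q$ is free as a right (and left) $A$-module with basis $\{\overline{q}\}_{q\in Q}$, so restriction of scalars sends $A\ast_\gamma Q$-projectives to $A$-projectives, and a projective resolution $\varepsilon^*\to k$ over $A\ast_\gamma Q$ is therefore also a projective resolution over $A$. Now $\Hom_A(\varepsilon^*, C^*)$ is honestly covered by \cref{lem:homcomplexQmodule} (both source and target are $A\ast_\gamma Q$-complexes), giving the $Q$-action on cohomology, and the composition pairing in that lemma applied to $\Hom_A(\varepsilon^*,C^*)\otimes_k\Hom_A(\varepsilon^*,\varepsilon^*)\to\Hom_A(\varepsilon^*,C^*)$ together with the $Q$-equivariant quasi-isomorphism $\Hom_A(\varepsilon^*,\varepsilon^*)\to\Hom_A(\varepsilon^*,k)$ yields the module structure over $\Ext^*_A(k,k)$ on the nose, without the lifting/well-definedness worries you spend your third paragraph on. Your derived-category aside and your treatment of the multiplication are sound in spirit, but the argument you actually spell out does not produce the $Q$-action until the resolution is taken over $A\ast_\gamma Q$.
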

\begin{proof}
We take a projective resolution of $k$ as a right $A\ast_\gamma Q$-module. Then the claims follow from \cref{lem:homcomplexQmodule} and taking homology.
\end{proof}

\begin{example}
    For a short exact sequence of groups \[N\to G\stackrel{pr}{\to} Q\] as in \cref{ex:sesisCrossedProduct} and $C^*,D^*$ two cochain complexes of right $k[G]$-complexes the $Q$-action on $\Hom_{k[N]}(C^*,D^*)$ can be calculated by \[(fq)(x)=f(xg^{-1})g\,,\] where $g\in G$ is any element such that $pr(g)=q$. 
    In particular, this is independent of the choice of representative for $q$ in $G$.
\end{example}

\section{A spectral sequence for extensions by \texorpdfstring{$p$}{p}-groups}\label{sec:spectralsequence_extensions}
Let $\FF$ be a field of characteristic $p>0$. We generalize the spectral sequence from \cite{ruepingstephan2022} to finite extensions by $p$-groups. Fix a short exact sequence
\[1\rightarrow P \rightarrow G \stackrel{pr}{\rightarrow} Q\rightarrow 1\]
of finite groups such that $P$ is a finite $p$-group. The spectral sequence in \cite{ruepingstephan2022} is obtained from the coradical filtration of a cochain complex $C^*$ of free $\FF [P]$-modules. Here we will show that if $C^*$ is the restriction of a cochain complex of $\FF [G]$-modules, then the spectral sequence becomes a spectral sequence of $\FF [Q]$-modules.

We view $\FF[P]$ as a subring of $\FF[G]$. Let $C^*$ be a cochain complex of right $\FF[G]$-modules such that their restrictions to $\FF[P]$ are free. 

Since group algebras of finite groups over a field are Frobenius algebras, their classes of injective and projective modules agree; see \cite[(15.9)~Theorem]{lam1999}. Moreover, for a finite $p$-group $P$, the algebra $\FF[P]$ is local. Hence projective modules over $\FF[P]$ are free by Kaplansky's theorem on projective modules.

From the topological side, we are interested in cochain complexes of $G$-spaces whose isotropy groups intersect $P$ trivially.
\begin{example}
Let $C_*(X;\FF)$ be the singular chain complex with coefficients in $\FF$ of a $G$-space $X$ such that the restricted $P$-action is free. Then the singular cochain complex $C^*(X;\FF)$ consists of right $\FF[G]$-modules whose restrictions over $\FF[P]$ are free.
\end{example}

From the algebraic side we can dualize perfect chain complexes over $\FF[G]$ or consider perfect cochain complexes directly. 
\begin{example}
Let $C_*$ be a perfect chain complex over $\FF[G]$. Dualizing yields a bounded cochain complex $C^*$ of finitely generated, injective (and thus projective) right $\FF[G]$-modules. Hence $C^*$ is a perfect cochain complex over $\FF[G]$ and since $\FF[P]$ is local, it is free over $\FF[P]$.
\end{example}

Let $I\subset \FF[P]$ be the augmentation ideal. Since $P$ is a $p$-group, the augmentation ideal is nilpotent. We write $L$ for its nilpotency degree minus $1$, i.e., $L$ is the maximal number for which $I^L\neq 0$. We equip $C^*$ with the increasing filtration of $\FF[P]$-modules 
\[0=F^{-1}C^*\subset \ldots \subset F^{L}C^*=C^*,\]
given by 
\[F^iC^* \coloneqq \{ x\mid x\lambda=0\quad\text{for all } \lambda \in I^{i+1}\}.\]

\begin{lemma}\label{lem:filtrationequivariant}
The filtration on $C^*$ is a filtration of right $\FF[G]$-modules.
\end{lemma}
\begin{proof}
We show that right multiplication with any element in $\FF[G]$ preserves the filtration degree. The group $G$ acts on $\FF[P]$ by conjugation. The unique maximal ideal $I$ is invariant by the conjugation action. Thus all powers of $I$ are invariant as well. 
If $x\in F^iC^*$, then for any $g\in G$ and $\lambda\in I^{i+1}$, we obtain
\[xg\lambda = (xg\lambda g^{-1})g=0g=0\]
by the definition of the filtration and since $g\lambda g^{-1}\in I^{i+1}$. Hence $xg$ lies in $F^iC^*$.
\end{proof}

We examine the associated spectral sequence. Since we do not follow standard grading conventions, we briefly recall the pages.
Let $Z_r^{k,t}$ denote the module of $r$-almost cocycles in homological degree $t$ and in filtration degree $k$, i.e.,
\[Z_r^{k,t}\coloneqq \{x\in F^kC^t\mid dx \in F^{k-r}C^{t+1}\}.\]
With this notation, the pages of the spectral sequence are given by 
\[E_r^{k,t}\coloneqq \frac{Z_r^{k,t}}{Z_{r-1}^{k-1,t}+d(Z_{r-1}^{k+r-1,t-1})}.\]
The differential of the cochain complex induces differentials
\[d_r:E_r^{k,t}\rightarrow E_r^{k-r,t+1}\]
on each page. Finally, we have for the induced filtration on $H^*(C^*)$ that \[E_\infty^{k,t} = F^k(H^t(C^*))/F^{k-1}(H^t(C^*)).\] 

\begin{lemma}\label{lem:QactonSS} The $G$-action on $E_r^{k,t}$ descends to a $Q$-action with which the spectral sequence becomes a spectral sequence of $\FF[Q]$-modules.
\end{lemma}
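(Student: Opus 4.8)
We want to show that the $G$-action on each page $E_r^{k,t}$ factors through $Q = G/P$, and that the resulting $Q$-module structure is compatible with the differentials $d_r$. The strategy is to reduce everything to an action on the $E_0$-page (equivalently, on the associated graded $\gr C^* = \bigoplus_k F^kC^*/F^{k-1}C^*$) and check that $P$ acts trivially there; compatibility on all higher pages then follows formally, since every page is a subquotient of $E_0$ built out of the differential and the $G$-action, both of which we already control by \cref{lem:filtrationequivariant} and the fact that $d$ is $G$-linear.

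First I would record that, by \cref{lem:filtrationequivariant}, the filtration $F^\bullet C^*$ is a filtration of right $\FF[G]$-modules, so $G$ acts on the associated graded $\gr^k C^* = F^kC^*/F^{k-1}C^*$ and hence on each $Z_r^{k,t}$ and each $E_r^{k,t}$, and the differentials $d_r$ are $G$-equivariant because $d$ is. So the only real content is that $P$ acts trivially on $E_0^{k,t} = \gr^k C^t$. For this, fix $x \in F^kC^t$ and $g \in P$; I want $xg - x \in F^{k-1}C^t$, i.e.\ $(xg - x)\lambda = 0$ for all $\lambda \in I^k$. Write $g = 1 - (1-g)$; since $1 - g \in I$, we get $xg - x = -x(1-g)$, and then for $\lambda \in I^k$ we have $(xg-x)\lambda = -x(1-g)\lambda \in x I^{k+1} = 0$ because $x \in F^kC^t$ kills $I^{k+1}$. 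Hence $xg - x \in F^{k-1}C^t$, so $g$ acts as the identity on $\gr^k C^t = E_0^{k,t}$. Since $P$ is generated by such $g$ (indeed it suffices that the augmentation ideal $I$ is generated by the elements $1-g$, $g\in P$, which it is), $P$ acts trivially on $E_0$.

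From here the argument is bookkeeping. The $P$-action on $E_0^{k,t}$ being trivial and $E_1^{k,t}$ being a subquotient of $E_0$ defined using $d_0$ (which is $G$-linear), $P$ acts trivially on $E_1$; inductively, $E_{r+1}$ is the homology of $(E_r, d_r)$ with respect to a $G$-linear differential on a $G$-module on which $P$ acts trivially, so $P$ acts trivially on $E_{r+1}$. Thus the $G$-action on every $E_r^{k,t}$ descends to a $Q$-action. The differentials $d_r$ are $G$-equivariant and therefore $Q$-equivariant for the descended action, and likewise the isomorphisms $E_{r+1}^{k,t} \cong H(E_r^{\bullet,\bullet}, d_r)$ at the given spot are $Q$-linear. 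Finally, on $E_\infty^{k,t} = F^k H^t(C^*)/F^{k-1}H^t(C^*)$ the descended $Q$-action agrees with the one coming from the $G$-action on $H^t(C^*)$ (which itself factors through $Q$ by the same $1-g$ computation, applied to cocycles), so the whole spectral sequence is one of $\FF[Q]$-modules converging to $H^*(C^*)$ as a $Q$-module.

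**Main obstacle.** There is no deep obstacle; the one point to get exactly right is the bound $x(1-g)\lambda = 0$, i.e.\ confirming $I \cdot I^k = I^{k+1}$ and that $F^kC^t$ is by definition annihilated by $I^{k+1}$ — a shift-by-one in the filtration index that is easy to misread. A secondary point is to state cleanly why triviality of the $P$-action passes from $E_r$ to $E_{r+1}$; this is immediate once one observes that each page is a subquotient of the previous one built from $G$-submodules (kernels and images of $d_r$) and a $G$-stable quotient, so a group element acting trivially on $E_r$ acts trivially on $E_{r+1}$.
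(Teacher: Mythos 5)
Your core argument is correct and is a natural repackaging of the paper's. You reduce to showing $P$ acts trivially on $E_0^{k,t} = F^kC^t/F^{k-1}C^t$ via the computation $x(1-g)\in F^{k-1}C^t$ for $x\in F^kC^t$, $g\in P$, and then propagate triviality to all higher pages by the observation that each $E_{r+1}$ is a subquotient of $E_r$ along $G$-linear maps. The paper's proof instead works directly on $E_r$ for each $r$: for $p\in P$ and $g\in G$ it writes $[xpg]=[x(p-1)g]+[xg]$ and observes that $x(p-1)g\in Z_{r-1}^{k-1,t}$, hence represents zero in $E_r^{k,t}$. Both hinge on the same fact (right multiplication by $p-1\in I$ lowers filtration degree), and the choice between doing it once on $E_0$ and inducting, versus doing it uniformly on every $E_r$, is a matter of packaging.

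One tangential claim at the end is wrong and should be struck: the $G$-action on $H^t(C^*)$ does \emph{not} in general factor through $Q$. The $1-g$ computation applied to a cocycle $z\in F^kC^t$ shows only that $z(1-g)\in F^{k-1}C^t$, not that $z(1-g)$ is a boundary; so the class of $z(1-g)$ may be nonzero in $H^t(C^*)$. (Take $C^*=\FF[G]$ concentrated in a single degree for a counterexample.) What does carry a $Q$-action is the associated graded $\gr H^*(C^*)=\bigoplus_k E_\infty^{k,*}$, which is exactly what your main argument establishes and is all the lemma asserts; $H^*(C^*)$ itself remains only an $\FF[G]$-module. Since that remark is not used in the proof, the argument stands, but the erroneous aside should be removed.
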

\begin{proof}
If $x$ represents a class $[x]\in E_r^{k,t}$, then $g\in G$ acts on $[x]$ by $[xg]$. We show that this action is independent of the representative of the coset $Pg$ in $G/P\cong Q$. Indeed, for $p\in P$, we obtain
\[[xpg]=[x(p-1)g] + [xg].
\]
Since $(p-1)\in I$, the element $x(p-1)g$ is in $Z^{k-1,t}_{r-1}$ and thus represents zero in $E_r^{k,t}$. Hence the $G$-action descends to a $Q$-action.

Since the differential of $C^*$ is $\FF[G]$-linear, all differentials in the spectral sequence are $\FF[Q]$-linear. 
\end{proof}

In \cite[Proposition~3.6, Corollary~3.7]{ruepingstephan2022}, we calculated the $E_0$-page and $E_1$-page over $\FF[P]$ involving the associated graded $\gr(\FF[P])$ of $\FF[P]$. We will show that these isomorphisms are $Q$-equivariant.
\begin{lemma}\label{lem:qactsongrFP}
The graded ring 
\[\gr(\FF[P])\coloneqq \bigoplus_s I^s/I^{s+1}\]
has a $Q$-action induced by conjugation.
\end{lemma}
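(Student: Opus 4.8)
The strategy is to upgrade the conjugation action of $G$ on $\FF[P]$ to an action on the associated graded ring and then to check that the normal subgroup $P$ acts trivially, so that the action descends to $Q = G/P$.

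For the first step, I would recall from the proof of \cref{lem:filtrationequivariant} that $G$ acts on $\FF[P]$ by conjugation through automorphisms of augmented algebras and that the augmentation ideal $I$, being the unique maximal ideal of the local ring $\FF[P]$, is invariant under this action; hence every power $I^s$ is invariant as well. Thus conjugation by $g\in G$ induces an automorphism of each subquotient $I^s/I^{s+1}$, and since conjugation respects products, these assemble into a graded ring automorphism of $\gr(\FF[P])$. Compatibility with composition in $G$ is immediate, so this defines a $G$-action on $\gr(\FF[P])$ by graded ring automorphisms.

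The one point that requires an argument is that $P\subset G$ acts trivially. Given $p\in P$ and $x\in I^s$, one computes
\[pxp^{-1} - x = \bigl((p-1)x - x(p-1)\bigr)p^{-1},\]
using $(p-1)x = px - x$ and $x(p-1) = xp - x$. Since $p-1\in I$ and $x\in I^s$, both $(p-1)x$ and $x(p-1)$ lie in the two-sided ideal $I^{s+1}$, and right multiplication by the unit $p^{-1}$ preserves $I^{s+1}$; hence $pxp^{-1}\equiv x \pmod{I^{s+1}}$. So conjugation by $p$ acts as the identity on $I^s/I^{s+1}$ for every $s$, and therefore the $G$-action on $\gr(\FF[P])$ factors through $Q$, giving the claimed $Q$-action.

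I do not anticipate a genuine obstacle here: the computation is short, and the only subtlety is to avoid assuming that $\FF[P]$ is commutative, since $P$ need not be abelian; this is handled by working throughout with the fact that the powers of $I$ are two-sided ideals rather than appealing to commutativity.
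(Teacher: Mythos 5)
Your proof is correct and follows essentially the same approach as the paper: both rely on $I^s$ being $G$-invariant (from the proof of \cref{lem:filtrationequivariant}) and on the observation that $(p-1)\in I$ forces the discrepancy between $p\lambda p^{-1}$ and $\lambda$ to land in $I^{s+1}$. The paper phrases this as ``$g$ and $pg$ act the same on $[\lambda]$'' by expanding $g^{-1}p^{-1}\lambda p g$, while you show directly that $p$ acts trivially via $pxp^{-1}-x=\bigl((p-1)x-x(p-1)\bigr)p^{-1}$; these are equivalent, and your remark about working with two-sided ideals rather than assuming commutativity is a sound precaution since $P$ need not be abelian.
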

\begin{proof}
Each power $I^s$ is invariant under the conjugation action by $G$ by the proof of \cref{lem:filtrationequivariant}. Thus it suffices to show that the action descends to a $Q$-action on $I^s/I^{s+1}$. Let $g\in G$ and $p\in P$. We show that $g$ and $pg$ act the same way. For $\lambda\in I^s$ we obtain
\[[g^{-1}p^{-1}\lambda pg]= [g^{-1}\lambda g] + [g^{-1}(p^{-1}-1)\lambda g]+[g^{-1}\lambda(p-1) g]+[g^{-1}(p^{-1}-1)\lambda(p-1) g]\]
and the last three summands lie in $I^{s+1}$ since $(p-1),(p^{-1}-1)\in I$.
\end{proof}
\begin{lemma}\label{lem:equivisos}
The natural maps
\[E_0^{L,t}\otimes_\FF I^s/I^{s+1} \rightarrow E_0^{L-s,t}, \qquad  [c]\otimes[\lambda]\mapsto [c\lambda],\]
\[E_1^{L,t}\otimes_\FF I^s/I^{s+1} \rightarrow E_1^{L-s,t}, \qquad  [c]\otimes[\lambda]\mapsto [c\lambda],\]
are $Q$-equivariant isomorphisms.
\end{lemma}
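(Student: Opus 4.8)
The plan is to prove the two isomorphisms by combining the non-equivariant statements from \cite[Proposition~3.6, Corollary~3.7]{ruepingstephan2022} with a bookkeeping argument showing that all the maps in sight respect the relevant $Q$-actions. By those earlier results, the maps $[c]\otimes[\lambda]\mapsto[c\lambda]$ are already known to be isomorphisms of $\FF$-vector spaces (in each homological degree $t$), so the only new content is $Q$-equivariance. First I would recall that $E_0^{\bullet,t}$ and $E_1^{\bullet,t}$ carry the $Q$-action of \cref{lem:QactonSS}, that $I^s/I^{s+1}$ carries the conjugation $Q$-action of \cref{lem:qactsongrFP}, and that the source is then equipped with the diagonal $Q$-action on the tensor product over $\FF$. (One should note the action on $I^s/I^{s+1}$ is via $[\lambda]\mapsto[g^{-1}\lambda g]$, matching the right-module conventions used for $C^*$.)

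The core computation is to check compatibility of the formula $[c]\otimes[\lambda]\mapsto[c\lambda]$ with these actions on representatives, before passing to subquotients. Choose $g\in G$ lifting a given $q\in Q$. Then $([c]\cdot q)\otimes([\lambda]\cdot q)$ is represented by $cg\otimes g^{-1}\lambda g$, which maps to $[cg\cdot g^{-1}\lambda g]=[c\lambda g]$, and this is exactly $([c\lambda])\cdot q$, the image of $([c]\otimes[\lambda])$ acted on by $q$. So on the nose the diagram commutes once we know the map is well defined. Well-definedness on the quotients — that $c\in Z^{L,t}_{r-1}$ or $\lambda\in I^{s+1}$ forces $c\lambda$ into the appropriate submodule defining $E_r^{L-s,t}$ — is already contained in \cite{ruepingstephan2022}; I would simply cite it rather than redo it. Because the non-equivariant map is bijective and $Q$-linear, its inverse is automatically $Q$-linear, so both displayed maps are isomorphisms of $\FF[Q]$-modules.

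The only genuinely delicate point is the choice of lift: $g$ is only well defined up to left multiplication by $P$, so I must confirm that $[c\lambda g]$ in $E_r^{L-s,t}$ does not depend on the representative $g$. This is where \cref{lem:QactonSS} and \cref{lem:qactsongrFP} do the work: replacing $g$ by $pg$ changes $cg$ by $c(p-1)g\in Z^{L-1,t}_{r-1}$ (using $(p-1)\in I$ and that $c$ has top filtration degree $L$), which is zero in $E_r^{L,t}$, and simultaneously changes $g^{-1}\lambda g$ by elements of $I^{s+1}$, which are zero in $I^s/I^{s+1}$; since we have already shown the diagonal action is well defined on the source and the map descends to the target, consistency follows. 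I expect this coherence check to be the main obstacle, but it is essentially the same verification already carried out in the proofs of the two preceding lemmas, so it amounts to assembling those facts rather than new labor. The argument applies verbatim to both the $E_0$- and $E_1$-versions, the $E_1$ case only requiring in addition that the multiplication map is compatible with the differential $d_0$, which is part of the cited computation of the $E_1$-page.
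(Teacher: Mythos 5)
Your proposal is correct and takes essentially the same approach as the paper: cite the non-equivariant isomorphisms from \cite[Proposition~3.6, Corollary~3.7]{ruepingstephan2022}, then verify equivariance by the computation $[c]g\otimes[\lambda]g = [cg]\otimes[g^{-1}\lambda g]\mapsto[(c\lambda)g]=[c\lambda]g$. The paper condenses your worry about the lift $g$ of $q$ into a single observation — ``since the $Q$-action is descended from $G$, it suffices to establish $G$-equivariance'' — which makes your extra well-definedness paragraph unnecessary, as the independence of the choice of $g$ is already the content of \cref{lem:QactonSS} and \cref{lem:qactsongrFP} and is inherited automatically once the diagonal action on the source and the target action are known to be well defined.
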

\begin{proof}
These are natural isomorphisms by \cite[Proposition~3.6, Corollary~3.7]{ruepingstephan2022}. Since the $Q$ action is descended from $G$, it suffices to establish $G$-equivariance. For $g\in G$, we have
\[[c]g\otimes [\lambda]g = [cg]\otimes [g^{-1}\lambda g] \mapsto [(c\lambda)g]=[c\lambda ]g. \qedhere\]
\end{proof}

To complete the calculation of the $E_0$- and $E_1$-page, we provide $Q$-equivariant identifications for $E_0^{L,*}$ and $E_1^{L,*}$. By \cite[Proposition~3.6]{ruepingstephan2022}, we have \[E_0^{L,t} = C^t/C^tI \cong C^t\otimes_{\FF [P]}\FF\] with $Q$-action descended from right multiplication by $G$ on $C^t$.

\begin{lemma}\label{lem:QactonEL0}
We have a natural $Q$-equivariant isomorphism $E_0^{L,t} \cong C^t\otimes_{\FF [G]}\FF Q$
and thus
\begin{align*}
   E_0^{L-s,t}&\cong (C^t\otimes_{\FF [G]}\FF [Q])\otimes_\FF I^s/I^{s+1}, \\
   E_1^{L-s,t}&\cong H^t(C^*\otimes_{\FF [G]}\FF [Q])\otimes_\FF I^s/I^{s+1}
\end{align*}
as right $\FF[Q]$-modules.
\end{lemma}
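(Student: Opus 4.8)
The plan is to reduce the claimed $Q$-equivariant isomorphism $E_0^{L,t}\cong C^t\otimes_{\FF[G]}\FF[Q]$ to a purely module-theoretic statement and then feed it into the already-established isomorphisms. The starting point is the identification $E_0^{L,t}=C^t/C^tI\cong C^t\otimes_{\FF[P]}\FF$ from \cite[Proposition~3.6]{ruepingstephan2022}, where the $Q$-action is the one descended from right multiplication by $G$ on $C^t$ (this makes sense by \cref{lem:QactonSS}, since $E_0^{L,t}=E_0^{L,t}$ sits at the top of the filtration). So the heart of the matter is to produce, for any right $\FF[G]$-module $M$, a natural isomorphism of right $\FF[Q]$-modules $M\otimes_{\FF[P]}\FF\cong M\otimes_{\FF[G]}\FF[Q]$, where on the left $Q\cong G/P$ acts by $(\overline{m})\cdot q=\overline{m\tilde q}$ for any lift $\tilde q\in G$ of $q$ (well-defined since $m\tilde q-m\tilde q p=m\tilde q(1-p)\in MI$ translated appropriately), and on the right $Q$ acts on the second factor $\FF[Q]$ by right multiplication.

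First I would write down the map explicitly. Using the section $s\colon Q\to G$ from \cref{ex:sesisCrossedProduct} and the identification $\FF[G]\cong \FF[P]\ast_{\Psi,\gamma}Q$, every element of $\FF[G]$ is uniquely a sum $\sum_q a_q\overline{q}$ with $a_q\in \FF[P]$. Then $\FF[Q]$ is the $\FF[G]$-module $\FF[P]\ast_{\Psi,\gamma}Q\otimes_{\FF[P]}\FF$, i.e.\ $\FF[G]/I\FF[G]$ with its residual right $G$-action; concretely $\FF[Q]$ has $\FF$-basis $(\overline q)_{q\in Q}$ and $\overline{q}\cdot a\overline{q'}=\aug(a)\overline{qq'}$. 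Define
\[
\varphi\colon M\otimes_{\FF[G]}\FF[Q]\to M\otimes_{\FF[P]}\FF,\qquad m\otimes \overline q\mapsto m\,s(q)\otimes 1,
\]
and in the other direction $\psi\colon m\otimes 1\mapsto m\otimes\overline e$. I would check $\varphi$ is well-defined over $\FF[G]$ (the only thing to verify is $m a\overline{q'}\otimes\overline q\mapsto$ the same as $m\otimes \overline{q}\cdot(a\overline{q'})=\aug(a)\,m\otimes\overline{qq'}$, which holds because $m a\overline{q'}s(q)\otimes 1 = m\,s(q')\,\Psi(q')^{-1}(a)\cdots\otimes 1$ collapses to $\aug(a)\,m\,s(q'q?)\otimes1$ modulo $MI$ — here I would be careful with left/right conventions and the cocycle $\gamma$, but every correction term lands in $MI$ and dies). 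Then $\varphi\psi=\id$ and $\psi\varphi=\id$ are immediate on generators.

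Next, $Q$-equivariance: on the right-hand side $\FF[Q]$ we have $\overline q\cdot q''=\overline{qq''}$, so $\varphi(m\otimes \overline{q}\cdot q'')=m\,s(qq'')\otimes 1$, while the descended $G$-action on $M\otimes_{\FF[P]}\FF$ sends $m\,s(q)\otimes 1$ to $m\,s(q)\,s(q'')\otimes 1$; these agree because $s(q)s(q'')=\gamma(q,q'')s(qq'')$ with $\gamma(q,q'')\in U(\FF[P])$ mapping to $1$ under $\aug$ by \cref{def:crossedAxioms}\eqref{crossedAxiomsiv}, hence $m\,s(q)\,s(q'')\otimes 1 = m\,\gamma(q,q'')\,s(qq'')\otimes 1$, and writing $\gamma(q,q'')=1+(\gamma(q,q'')-1)$ with $\gamma(q,q'')-1\in I$ shows the difference lies in $MI$ and vanishes. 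So $\varphi$ intertwines the two $Q$-actions, proving $E_0^{L,t}\cong C^t\otimes_{\FF[G]}\FF[Q]$ as right $\FF[Q]$-modules. Combining this with \cref{lem:equivisos} gives the formula for $E_0^{L-s,t}$; for $E_1^{L-s,t}$ one takes homology, using that $E_1^{L-s,*}$ is the homology of $(E_0^{L-s,*},d_0)$, that $d_0$ corresponds under the isomorphisms to the differential on $C^*\otimes_{\FF[G]}\FF[Q]$ tensored with $\id_{I^s/I^{s+1}}$ (this is exactly what \cite[Corollary~3.7]{ruepingstephan2022} records, now upgraded to be $Q$-equivariant by \cref{lem:equivisos}), and that $I^s/I^{s+1}$ is a finite-dimensional $\FF$-vector space so homology commutes with $-\otimes_\FF I^s/I^{s+1}$.

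The main obstacle I anticipate is purely bookkeeping: getting the left/right-module conventions and the twisting cocycle $\gamma$ to cancel correctly when checking that $\varphi$ descends to $M\otimes_{\FF[G]}\FF[Q]$ and is $Q$-equivariant. All the nontrivial-looking terms are of the form (something)$\cdot(\gamma(-,-)-1)$ or (something)$\cdot(p-1)$ with $\gamma(-,-)-1,\,p-1\in I$, and $MI$ is precisely what we quotient out by, so they disappear — but one has to be disciplined about which side the augmentation ideal acts on. There is no conceptual difficulty beyond that.
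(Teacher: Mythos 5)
Your proposal is correct and takes essentially the same route as the paper: reduce to a natural $Q$-equivariant isomorphism $C^t\otimes_{\FF[P]}\FF\cong C^t\otimes_{\FF[G]}\FF[Q]$ and then feed it into \cref{lem:equivisos}. The only difference is in how that isomorphism is produced: where you build it explicitly via a section $s\colon Q\to G$ and check that the cocycle terms $\gamma(-,-)-1\in I$ and conjugates of $p-1\in I$ die in $MI$, the paper sidesteps all of this bookkeeping by inserting the tautological factor $\FF[G]$, writing $C^t\otimes_{\FF[P]}\FF\cong C^t\otimes_{\FF[G]}\FF[G]\otimes_{\FF[P]}\FF$, and invoking the standard $Q$-equivariant identification $\FF[G/P]\cong\FF[G]\otimes_{\FF[P]}\FF$.
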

\begin{proof}
We let $Q$ act on $\FF[G]\otimes_{\FF[P]}\FF$ as when considering $\FF[G]$ as a cochain complex concentrated in degree zero. Thus $Pg=gP$ acts on $c\otimes 1$ by $cg\otimes 1$. The isomorphism
$\FF[G/P]\cong \FF [G] \otimes_{\FF [P]} \FF $
is $Q$-equivariant and yields a natural isomorphism
\[C^t\otimes_{\FF [P]}\FF\cong C^t\otimes_{\FF [G]} \FF [G] \otimes_{\FF [P]} \FF \cong C^t\otimes_{\FF [G]} \FF [Q] \]
of right $\FF [Q]$-modules.

The identification of the $E_0$- and $E_1$-page follows now from \cref{lem:equivisos}.
\end{proof}

Let $\varepsilon^{*}(G)$ be a projective resolution of $\FF$ by finitely generated projective right $\FF[G]$-modules. We will use the following result for perfect cochain complexes $C^*$.

\begin{lemma}\label{lem:applyingHomOnSS} Suppose that $C^*$ is bounded below. Then
the natural map 
\[C^*\cong \Hom_{\FF}(\FF, C^*)\to \Hom_{\FF}(\varepsilon^*(G), C^*)\]
induces an isomorphism of spectral sequences for the pages $E_{r}$ with $r\geq 1$.
\end{lemma}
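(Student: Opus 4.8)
The plan is to realise the map as a morphism between the two spectral sequences of \cref{sec:spectralsequence_extensions}, to show that it is already an isomorphism on the $E_1$-page, and then to invoke the purely formal fact that a morphism of spectral sequences which is an isomorphism on one page is an isomorphism on all later pages, compatibly with the differentials and the identifications $E_{r+1}=H(E_r,d_r)$.

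First I would make $D^*\coloneqq\Hom_\FF(\varepsilon^*(G),C^*)$ into a cochain complex of right $\FF[G]$-modules, free over $\FF[P]$, of the kind to which \cref{sec:spectralsequence_extensions} applies, in such a way that the natural map
\[\Hom_\FF(\aug,C^*)\colon C^*=\Hom_\FF(\FF,C^*)\longrightarrow D^*\]
induced by the augmentation $\aug\colon\varepsilon^*(G)\to\FF$ is right $\FF[G]$-linear and that the coradical filtration of \cref{sec:spectralsequence_extensions} takes the transparent form $F^iD^*=\Hom_\FF(\varepsilon^*(G),F^iC^*)$. Since $\varepsilon^*(G)$ is concentrated in non-positive cohomological degrees, the hypothesis that $C^*$ is bounded below guarantees that each $D^n=\bigoplus_i\Hom_\FF(\varepsilon^i(G),C^{i+n})$ is a finite sum of $\FF[P]$-free modules, so $D^*$ is again a bounded-below complex of $\FF[P]$-free modules, and the filtration is finite because that of $C^*$ is. The displayed map is then a map of filtered right $\FF[G]$-module complexes, hence induces a morphism of the associated $\FF[Q]$-module spectral sequences.

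For the $E_1$-statement: because $\FF$ is a field, $\Hom_\FF(\varepsilon^*(G),-)$ is exact, so it commutes with passing to the associated graded, and
\[E_0^{k,\bullet}(D^*)\;\cong\;\Hom_\FF\!\bigl(\varepsilon^*(G),\,E_0^{k,\bullet}(C^*)\bigr),\]
a $\Hom$-double complex with $d_0$ its total differential, under which the comparison map becomes $\Hom_\FF\bigl(\aug,E_0^{k,\bullet}(C^*)\bigr)$. Now the key point: over the field $\FF$, the quasi-isomorphism $\aug\colon\varepsilon^*(G)\to\FF$ of complexes of $\FF$-vector spaces is a chain homotopy equivalence, since its mapping cone is acyclic, hence split, hence contractible. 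Consequently $\Hom_\FF(\aug,M^\bullet)$ is a chain homotopy equivalence, in particular a quasi-isomorphism, for every complex $M^\bullet$ of $\FF$-vector spaces; taking $M^\bullet=E_0^{k,\bullet}(C^*)$ and passing to $E_1=H(E_0,d_0)$ shows that $E_1^{k,\bullet}(C^*)\to E_1^{k,\bullet}(D^*)$ is an isomorphism for each $k$. The formal comparison of spectral sequences then yields the isomorphism on $E_r$ for all $r\ge1$.

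I expect the main obstacle to lie in the first step rather than in this homological core: one has to fix the right $\FF[G]$-module structure on $\Hom_\FF(\varepsilon^*(G),C^*)$ (and hence the induced $Q$-action) so that the spectral sequence of \cref{sec:spectralsequence_extensions} is literally the one in play, check that its coradical filtration is the expected $\Hom_\FF(\varepsilon^*(G),F^\bullet C^*)$, and verify that $\Hom_\FF(\aug,C^*)$ is filtered and $Q$-equivariant. Once $E_0$ of $D^*$ is identified with $\Hom_\FF(\varepsilon^*(G),-)$ applied to $E_0$ of $C^*$, everything else is the soft fact that $\Hom_\FF$ over a field preserves chain homotopy equivalences, together with the spectral-sequence comparison theorem.
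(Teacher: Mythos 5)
There is a genuine gap, and you actually flagged the right place to worry about it. The identification on which your whole $E_1$-computation rests,
\[
F^{i}\Hom_{\FF}(\varepsilon^{*}(G),C^{*}) \;=\; \Hom_{\FF}\bigl(\varepsilon^{*}(G),F^{i}C^{*}\bigr),
\]
is false for the $\FF[G]$-module structure on $\Hom_{\FF}(\varepsilon^{*}(G),C^{*})$ that the spectral sequence of \cref{sec:spectralsequence_extensions} actually uses. That module structure must be the diagonal one, $(fg)(x)=f(xg^{-1})g$: this is what makes \cref{rem:descritionE1page_as_ext} true, namely $E_{0}^{0,*}(\Hom_{\FF}(\varepsilon^{*}(G),D^{*}))=\Hom_{\FF[P]}(\varepsilon^{*}(G),D^{*})$, and what makes composition in \cref{lem:spectralsequence_is_module} filtration-compatible. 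Under the diagonal action, $F^{0}D^{*}$ is the set of \emph{$P$-equivariant} maps $\varepsilon^{*}(G)\to C^{*}$, whereas $\Hom_{\FF}(\varepsilon^{*}(G),F^{0}C^{*})$ is the set of maps whose \emph{image} lies in the $P$-invariants $(C^{*})^{P}$. These are genuinely different subspaces (already for $\varepsilon=\FF[P]=C^{*}$ the intersection is only the one-dimensional span of multiplication by the norm element, while each side is four-dimensional). Your description of $F^{\bullet}D^{*}$ would be correct for the plain postcomposition $\FF[G]$-action $(fa)(x)=f(x)a$, but that is a different module structure giving a different coradical filtration and a different spectral sequence — not the one the rest of Section 3 is built around. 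Consequently the asserted isomorphism $E_{0}^{k,\bullet}(D^{*})\cong\Hom_{\FF}(\varepsilon^{*}(G),E_{0}^{k,\bullet}(C^{*}))$, and with it the reduction of the comparison map to $\Hom_{\FF}(\aug,E_{0}^{k,\bullet}(C^{*}))$, does not hold, and your $E_1$-isomorphism is not established.

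The soft part of your argument — a morphism of spectral sequences which is an isomorphism on $E_1$ is an isomorphism on $E_{r\ge 1}$, and $\Hom_{\FF}(-,M)$ preserves chain homotopy equivalences over a field — is fine, and the observation that $\aug\colon\varepsilon^{*}(G)\to\FF$ is an $\FF$-linear homotopy equivalence is correct; what is missing is a valid bridge from there to the $E_1$-page of the \emph{coradical} filtration. The paper gets this bridge differently: it upgrades the quasi-isomorphism $C^{*}\to\Hom_{\FF}(\varepsilon^{*}(G),C^{*})$ to a chain homotopy equivalence \emph{over $\FF[P]$} (both are bounded below complexes of injective, hence free, $\FF[P]$-modules), and then invokes \cite[Corollary~3.7]{ruepingstephan2022}, which says precisely that an $\FF[P]$-chain homotopy equivalence of free complexes induces an isomorphism on $E_{r\ge 1}$. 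If you want to stay closer in spirit to your route, the salvageable version is: the map is a quasi-isomorphism of bounded-below complexes of $\FF[P]$-free modules, so it induces a quasi-isomorphism on $(-)\otimes_{\FF[P]}\FF$, and by \cref{lem:equivisos} the whole $E_1$-page is $H^{*}((-)\otimes_{\FF[P]}\FF)\otimes I^{s}/I^{s+1}$ in each column; this gives the $E_1$-isomorphism without any incorrect description of $E_0$.
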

\begin{proof}
Since the restriction of $C^*$ to $\FF[P]$ is free, the cochain complex $C^*$ is a bounded below complex of injective $\FF[P]$-modules. The complex $\Hom_{\FF}(\varepsilon^*(G), C^*)$ is bounded below as well and consists of injective (and thus free) $\FF[P]$-modules. Thus the natural map is a quasi-isomorphism between bounded below complexes of injectives and hence a homotopy equivalence. It follows from \cite[Corollary~3.7]{ruepingstephan2022} that the induced map on spectral sequences is an isomorphism.
\end{proof}

We say that a spectral sequence $E$ is a \emph{right module} over a multiplicative spectral sequence $R$ if each page $E_r$ is a bigraded right module over $R_r$ and the Leibniz rule holds for the action $E_r\otimes R_r\to E_r$. Furthermore, we require that the induced multiplication 
$H^*(E_r)\otimes H^*(R_r)\to H^*(E_r) $
agrees with the multiplication on the $(r+1)$-page.
\begin{lemma}\label{lem:spectralsequence_is_module} The spectral sequence $E_{r>0}^{*,*}(\Hom_\FF(\varepsilon^*(G),\varepsilon^*(G)))$ is multiplicative and the spectral sequence $E_{r>0}^{*,*}(\Hom_\FF(\varepsilon^*(G),D^*))$ is a module over it for any cochain complex $D^*$ of right $\FF[G]$-modules.
\end{lemma}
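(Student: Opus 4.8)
The plan is to exhibit all these spectral sequences as arising from the coradical (or rather the $I$-adic) filtration applied to hom complexes, and then to transport the evident DG-algebra/DG-module structure through the filtration. Concretely, write $R^* = \Hom_\FF(\varepsilon^*(G),\varepsilon^*(G))$ and $M^* = \Hom_\FF(\varepsilon^*(G),D^*)$. Both are cochain complexes of right $\FF[P]$-modules (indeed of free $\FF[P]$-modules, since $\varepsilon^*(G)$ is finitely generated free over $\FF[P]$), so the filtration $F^iC^* = \{x\mid xI^{i+1}=0\}$ from \cref{sec:spectralsequence_extensions} applies to both and produces the spectral sequences in question. The first step is therefore to record that composition $R^*\otimes_\FF R^*\to R^*$ and the action $R^*\otimes_\FF M^*\to M^*$ are morphisms of filtered complexes: if $f$ kills $I^{i+1}$ on the right and $g$ kills $I^{j+1}$, one checks using right $A$-linearity that $f\circ g$ kills $I^{i+j+1}$, so the filtration is multiplicative in the sense that $F^i R^*\cdot F^j R^*\subset F^{i+j}R^*$ and $F^i R^*\cdot F^j M^*\subset F^{i+j}M^*$. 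Here the grading convention is the one of \cref{sec:spectralsequence_extensions}, where $F^{L}$ is the top of the filtration, so one should phrase this as: the product of an element of filtration $\le k$ and one of filtration $\le k'$ has filtration $\le k+k'-L$, matching the indexing $E_r^{k,t}$ there.

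Second, I would invoke the standard fact that a filtered DG algebra gives rise to a multiplicative spectral sequence and a filtered DG module over it gives rise to a module spectral sequence, with the Leibniz rule on each page and with the multiplication induced on homology of $E_r$ agreeing with the multiplication on $E_{r+1}$. This is entirely formal once the pairing of filtered complexes is in hand; the bigraded product on $E_r$ is induced by $Z_r^{k,t}\otimes Z_r^{k',t'}\to Z_r^{k+k'-L,t+t'}$, which is well-defined because the product lands in the $r$-almost cocycles of the appropriate filtration degree, and it descends modulo the subgroups $Z_{r-1}^{\bullet-1,\bullet}+d(Z_{r-1}^{\bullet+r-1,\bullet-1})$ exactly because those are ideals/submodules for the product — again a consequence of the filtration being multiplicative together with the Leibniz rule for $d$. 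For $r=1$ one may also check this concretely through the identifications of \cref{lem:QactonEL0,lem:equivisos}, where the product on $E_1$ becomes the evident one on $H^*(\varepsilon^*(G)\otimes_{\FF[G]}\FF[Q]$-hom complexes$)$ tensored with the product on $\gr(\FF[P])$, but I would prefer to keep the argument at the level of filtered complexes and only invoke the low-page identification as a sanity check.

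Third, the compatibility requirement in the definition I introduced — that $H^*(E_r)\otimes H^*(R_r)\to H^*(E_r)$ equals the multiplication on $E_{r+1}$ — is precisely the content of the statement that the exact couple of a filtered DG algebra is multiplicative, so I would cite that (e.g. via McCleary or a direct appeal to the construction of the product on the pages of a filtered complex) rather than reprove it. Putting these together: $R^*$ with its multiplicative $I$-adic filtration gives the multiplicative spectral sequence $E_{r>0}^{*,*}(R^*)$, and $M^*$ as a filtered DG $R^*$-module gives $E_{r>0}^{*,*}(M^*)$ the structure of a module over it. The main obstacle is bookkeeping rather than mathematics: one must be careful that the nonstandard grading of \cref{sec:spectralsequence_extensions} (with the filtration increasing and topping out at $L$, so that $d_r$ lowers filtration and the product adds a shift by $-L$) is handled consistently in the signs and the bidegrees, and one must verify that the filtration really is multiplicative for the hom and tensor pairings — which is the one genuine (if short) computation, using that the elements $\overline q$ and the powers $I^s$ interact as in \cref{lem:crossedsimpleidentities} and the proof of \cref{lem:filtrationequivariant}.

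\begin{proof}
Write $R^* \coloneqq \Hom_\FF(\varepsilon^*(G),\varepsilon^*(G))$ and, for a cochain complex $D^*$ of right $\FF[G]$-modules, $M^* \coloneqq \Hom_\FF(\varepsilon^*(G),D^*)$. Since $\varepsilon^*(G)$ consists of finitely generated free right $\FF[G]$-modules, both $R^*$ and $M^*$ are cochain complexes of right $\FF[G]$-modules whose restrictions to $\FF[P]$ are free. Hence the increasing filtration $F^iC^* = \{x\mid x\lambda = 0 \text{ for all }\lambda\in I^{i+1}\}$ of \cref{sec:spectralsequence_extensions} applies to both, and the associated spectral sequences are $E_r^{*,*}(R^*)$ and $E_r^{*,*}(M^*)$.

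Composition makes $R^*$ a DG algebra and $M^*$ a DG right $R^*$-module (with the sign conventions of the hom complex). We claim these structures are compatible with the filtration in the sense that $F^kR^*\cdot F^{k'}R^* \subset F^{k+k'-L}R^*$ and $F^kM^*\cdot F^{k'}R^* \subset F^{k+k'-L}M^*$. Indeed, suppose $g\in F^{k}R^*$ and $f\in F^{k'}R^*$, so $g$ annihilates $I^{L-k+1}$ and $f$ annihilates $I^{L-k'+1}$ on the right. For $\lambda\in I^{L-(k+k'-L)+1} = I^{2L-k-k'+1}$ we have $I^{2L-k-k'+1} \subset I^{L-k+1}\cdot I^{L-k'+1}$ (as $2L-k-k'+1 \geq (L-k+1)+(L-k'+1)-1$, and powers of $I$ are decreasing), so $\lambda$ is a sum of products $\mu\nu$ with $\mu\in I^{L-k+1}$, $\nu\in I^{L-k'+1}$; then $(f\circ g)(x\mu\nu) = f(g(x\mu)\nu) = f(g(x\mu)\cdot\nu)$, and since $g(x\mu) = g(x)\cdot\mu \cdot(\text{stuff})$ — more precisely, by right $A$-linearity $g(x\mu) = g(x)\mu$ — this equals $f(g(x)\mu\nu)$, which vanishes as $\mu\nu\in I^{L-k+1}\subset I^{L-k'+1}$ is annihilated by $f$. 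The argument for $M^*$ is identical. Thus $F^\bullet$ is a multiplicative filtration on the DG algebra $R^*$ and a compatible filtration on the DG module $M^*$.

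Consequently, by the standard theory of the spectral sequence of a filtered differential graded algebra (see for instance McCleary, \emph{A User's Guide to Spectral Sequences}), the pairing of filtered complexes induces on the pages $r\geq 1$ a product $E_r^{k,t}(R^*)\otimes E_r^{k',t'}(R^*)\to E_r^{k+k'-L,t+t'}(R^*)$ satisfying the Leibniz rule for $d_r$, making $E_{r>0}^{*,*}(R^*)$ a multiplicative spectral sequence, and a compatible action $E_r^{k,t}(M^*)\otimes E_r^{k',t'}(R^*)\to E_r^{k+k'-L,t+t'}(M^*)$ for which the Leibniz rule holds, making $E_{r>0}^{*,*}(M^*)$ a right module over it. The well-definedness of these pairings on the subquotients $E_r^{k,t} = Z_r^{k,t}/(Z_{r-1}^{k-1,t}+d(Z_{r-1}^{k+r-1,t-1}))$ follows from the multiplicativity of the filtration together with the Leibniz rule, exactly as in the classical case. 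Finally, passing to homology, the product induced by $H^*(E_r(R^*))\otimes H^*(E_r(R^*))\to H^*(E_r(R^*))$ and the action $H^*(E_r(M^*))\otimes H^*(E_r(R^*))\to H^*(E_r(M^*))$ coincide with the product and action on the $(r+1)$-page, since the product on $E_{r+1}$ is by construction the one induced by the exact couple of the filtered DG algebra. This establishes all the required properties.
\end{proof}
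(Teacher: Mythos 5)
Your high-level strategy — identify the composition pairing, check it respects the filtration, and then invoke the standard fact (McCleary) that a filtered DG algebra and a filtered DG module yield a multiplicative spectral sequence and a module spectral sequence over it — is exactly the paper's route; the paper delegates the filtration-compatibility check to \cite[Proposition~4.4]{ruepingstephan2022}. However, your execution of that one genuine computation is wrong, and the error is not cosmetic.

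First, the filtration-degree formula you ultimately assert, $F^kR^*\cdot F^{k'}R^*\subset F^{k+k'-L}R^*$, is false, and your own pre-proof discussion had it right before you talked yourself out of it. The filtration is increasing with $F^{-1}=0\subset F^0\subset\cdots\subset F^L$, and the correct statement is $F^kR^*\cdot F^{k'}R^*\subset F^{k+k'}R^*$ (which is what \cref{rem:descritionE1page_as_ext} is using when it writes the pairing $E_r^{k,t}\otimes H^r(BP)\to E_r^{k,t+r}$ coming from $E_1^{0,*}$). Your $-L$ shift would force $F^0\cdot F^{k'}\subset F^{k'-L}=0$ for $k'<L$, but $\id\in F^0$ and $\id\circ f=f$, a contradiction. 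Second, inside the computation you misread the definition $F^iC^*=\{x\mid x\lambda=0\text{ for all }\lambda\in I^{i+1}\}$: $g\in F^k$ means $g$ is annihilated by $I^{k+1}$, not by $I^{L-k+1}$. Third, and most seriously, the step ``by right $A$-linearity $g(x\mu)=g(x)\mu$'' is invalid: elements of $\Hom_\FF(\varepsilon^*(G),\varepsilon^*(G))$ are only $\FF$-linear, and only the $F^0$-part consists of $\FF[P]$-linear maps. The right $\FF[P]$-module structure on $\Hom_\FF(-,-)$ that makes this filtration work is the conjugation one from \cref{lem:homcomplexQmodule} (taking $A=\FF$, $Q=G$ there), $(f\cdot g)(x)=f(xg^{-1})g$, and the correct way to see $F^k\cdot F^{k'}\subset F^{k+k'}$ is Hopf-algebraic: composition is $\FF[G]$-linear for the diagonal action on the tensor product, and the augmentation-ideal filtration of the group algebra is a Hopf filtration, $\Delta(I^n)\subset\sum_{i+j=n}I^i\otimes I^j$, so for $\lambda\in I^{k+k'+1}$ every term $\lambda_{(1)}\otimes\lambda_{(2)}$ has $\lambda_{(1)}\in I^{k'+1}$ or $\lambda_{(2)}\in I^{k+1}$ and hence $(f\circ g)\cdot\lambda=\sum(f\cdot\lambda_{(1)})\circ(g\cdot\lambda_{(2)})=0$. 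With that correction, the rest of your argument (well-definedness on $Z_r/(\cdots)$, the Leibniz rule, and the McCleary citation) goes through as in the paper.
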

\begin{proof}
The composition 
\[\Hom_{\FF}(\varepsilon^*(G),\varepsilon^*(G))\otimes_\FF \Hom_{\FF}(\varepsilon^*(G),D^*)\to \Hom_{\FF}(\varepsilon^*(G),D^*)\]
is compatible with the filtrations as in the proof of \cite[Proposition~4.4]{ruepingstephan2022}. For $
D^*=\varepsilon^*(G)$, the composition is the multiplication of a filtered differential graded algebra that is compatible with the filtration and thus induces a multiplicative spectral sequence; see \cite{mccleary2001user}[Theorem 2.14]. For arbitrary $C^*$, we obtain analogously an induced module structure of spectral sequences.
\end{proof}

We will use the following description of the $E_1$-page.
\begin{remark}\label{rem:descritionE1page_as_ext} 
By definition of the $E_0$-page we have
\[E_0^{0,*}(\Hom_\FF(\varepsilon^*(G),D^*)) = \Hom_{\FF[P]}(\varepsilon^*(G),D^*)\]
and thus 
\[ E_1^{0,*}(\Hom_\FF(\varepsilon^*(G),D^*))\cong \Ext_{\FF[P]}^*(\FF, D^*). \] For $D^*=\varepsilon^*(G)$, we obtain the group cohomology ring
\[E_1^{0,*}(\Hom_\FF(\varepsilon^*(G),\varepsilon^*(G)))\cong \Ext_{\FF[P]}^*(\FF, \FF) = H^*(BP).\]
All higher pages $E_{r\ge 1}(\Hom_\FF(\varepsilon^*(G),D^*))$ inherit an $H^*(BP)$-module structure 
\[E_{r\ge 1}^{k,t}(\Hom_\FF(\varepsilon^*(G),D^*))\otimes_\FF H^r(BP)\to E_{r\ge 1}^{k,r+t}(\Hom_\FF(\varepsilon^*(G),D^*))\] using the quotient map \[H^*(BP)= E_{1}^{0,*}(\Hom_\FF(\varepsilon^*(G),\varepsilon^*(G))) \to E_{r}^{0,*}(\Hom_\FF(\varepsilon^*(G),\varepsilon^*(G))),\]
and the differentials are $H^*(BP)$-linear.

We will use this structure for $D^*=C^*$ bounded below such that the restriction to $\FF[P]$ is free as in \cref{lem:applyingHomOnSS}. Then we can replace $\Hom_\FF(\varepsilon^*(G),C^*)$ by $C^*$ in the spectral sequence for $E_{r\geq 1}$. The map
\[\Hom_\FF(\varepsilon^*(G),\varepsilon^*(G))\to \Hom_\FF(\varepsilon^*(G),\FF)\]
induces an isomorphism on the $E_1$-page and thus all higher pages as well by \cite[Corollary~3.7]{ruepingstephan2022}. Hence $ E_{r\geq 1}(\Hom_\FF(\varepsilon^*(G),\varepsilon^*(G)))$ can be replaced by the spectral sequence $ E_{r\geq 1}(\Hom_\FF(\varepsilon^*(G),\FF))$.
\end{remark}
\section{The spectral sequence for extensions by \texorpdfstring{$\ZZ/2\times \ZZ/2$}{Z2 x Z2}}\label{sec:spectralsequence_elementaryab}
In this section, we consider the spectral sequence from \cref{sec:spectralsequence_extensions} for an elementary abelian $2$-group $P\cong\ZZ/2\times \ZZ/2$  of rank $2$ and $\FF$ a field of characteristic $2$. If $f_1,f_2$ are generators for $P$, then $\FF[P]$ is an exterior algebra generated by $\lambda_1\coloneqq f_1-1$ and $\lambda_2\coloneqq f_2-1$. The augmentation ideal $I\subset \FF[P]$ is $( \lambda_1,\lambda_2)$ and
\[I^2/I^3\cong \FF\lambda_1\lambda_2,\quad I/I^2\cong \FF\lambda_1\oplus\FF\lambda_2,\quad \FF[P]/I\cong \FF.
\]
Thus only three columns in the spectral sequence can be nontrivial. 
\begin{lemma}\label{lem:spectralsequence_collapse}
Let $C^*$ be cochain complex over $\FF[G]$ such that its restriction to $\FF[P]$ is a perfect complex. If $H^*(C^*)\neq 0$, then $\dim_\FF H^*(C^*)\geq 4$ with
\begin{equation*}
    \dim_\FF E_\infty^{0,*}\geq 1,\quad \dim_\FF E_\infty^{1,*}\geq 2,\quad \dim_\FF E_\infty^{2,*}\geq 1.
\end{equation*}
If $\dim_\FF H^*(C^*)=4$, then the spectral sequence collapses on the $E_2$-page.
\end{lemma}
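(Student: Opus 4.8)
The structure to exploit is that the spectral sequence has only three nonzero columns, $k=0,1,2$, and that by Lemmas~\ref{lem:equivisos}--\ref{lem:QactonEL0} the columns $k=1$ and $k=2$ are obtained from the column $k=0$ by tensoring over $\FF$ with $I/I^2$ (two-dimensional) and $I^2/I^3$ (one-dimensional) respectively. So if I write $D^t \coloneqq H^t(C^* \otimes_{\FF[P]} \FF)$ for the $E_1^{0,*}$-entries (which is $\Ext^*_{\FF[P]}(\FF, C^*)$ after re-indexing, cf. \cref{rem:descritionE1page_as_ext}), then the whole $E_1$-page is $D^*$, $D^*\otimes(I/I^2)$, $D^*\otimes(I^2/I^3)$. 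My first step is to record the inequalities $\dim E_\infty^{0,*}\ge 1$, etc.: since $C^*|_{\FF[P]}$ is perfect with $H^*(C^*)\neq 0$, the $E_\infty$-page is the associated graded of $H^*(C^*)$, so it is nonzero; and the three columns are linked through the module structure over $H^*(BP) = \FF[x,y]$ with $|x|=|y|=1$ coming from \cref{rem:descritionE1page_as_ext} and \cref{lem:equivisos} — multiplication by a class in $I/I^2$-degree is an iso on $E_1$, so the three columns are "the same size" on $E_1$, and I must argue the surviving parts stay balanced. The cleanest route is: the $d_1$ differentials go $E_1^{k,t}\to E_1^{k-1,t+1}$, i.e. (after the identification above) they are built from a single differential on $D^*$ via the $I^\bullet/I^{\bullet+1}$-linearity, and a minimal model / Koszul-type argument (exactly as in \cite{ruepingstephan2022}) shows that if $E_\infty$ is nonzero then each of the three columns contributes, with total dimension at least $1+2+1=4$. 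This gives the first assertion.

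For the collapse statement, assume $\dim_\FF H^*(C^*)=4$. Combined with the inequalities just proved, equality must hold in each: $\dim E_\infty^{0,*}=1$, $\dim E_\infty^{1,*}=2$, $\dim E_\infty^{2,*}=1$, and moreover the $E_2$-page already has total dimension $4$ and equals $E_\infty$. The plan is to show $E_2 = E_\infty$, i.e. that $d_r=0$ for all $r\ge 2$, by a degree/column-count obstruction. A differential $d_r\colon E_r^{k,t}\to E_r^{k-r,t+1}$ with $r\ge 2$ must go from column $k=2$ to column $k=0$ (the only pair of nonzero columns at distance $\ge 2$). So I only need to rule out a nonzero $d_2\colon E_2^{2,t}\to E_2^{0,t+1}$. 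Here I use that $\dim_\FF E_2^{2,*}$ is already forced: from $\dim E_\infty^{2,*}\ge 1$ and the fact that nothing maps into column $2$ on pages $\ge 2$, we get $\dim E_2^{2,*}\ge 1$; likewise $\dim E_2^{0,*}\ge 1$ since nothing maps out of column $0$. A nonzero $d_2$ out of column $2$ would strictly decrease $\dim E_3^{2,*}$ and $\dim E_3^{0,*}$ below these bounds, contradicting $\dim E_\infty^{2,*}\ge 1$, $\dim E_\infty^{0,*}\ge 1$. Hence $d_2=0$, and since $d_{\ge 3}$ has no room at all, $E_2=E_\infty$.

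The main obstacle is the \emph{first} part — pinning down why $\dim E_\infty^{0,*}\ge 1$, $\dim E_\infty^{1,*}\ge 2$, $\dim E_\infty^{2,*}\ge 1$ separately, rather than just $\dim H^*(C^*)\ge 4$ in bulk. The inequality for the middle column (the "$2$") is the delicate one: a priori the $d_1$-differential could kill all of $D^*$ in one column while sparing another. The way around this is the $H^*(BP)$-module structure from \cref{rem:descritionE1page_as_ext}: since multiplication by the degree-one generators $x,y$ of $H^*(BP)$ on the $E_1$-page realizes exactly the identifications $E_1^{L,t}\otimes I^s/I^{s+1}\xrightarrow{\sim} E_1^{L-s,t}$ of \cref{lem:equivisos}, and the $d_1$ differential is $H^*(BP)$-linear, the homology $E_2^{*,*}$ is again a module over $H^*(BP)$ of the shape (column $0$) $\oplus$ (column $0$)$\otimes\langle x,y\rangle$ $\oplus$ (column $0$)$\otimes\langle xy\rangle$ — i.e. $E_2^{*,*}\cong E_2^{0,*}\otimes \Lambda(\FF x\oplus \FF y)$ as a bigraded vector space, where $\Lambda(\FF x \oplus \FF y)$ has total dimension $1+2+1=4$. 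Since $E_\infty\neq 0$ forces $E_2^{0,*}\neq 0$, hence $\dim_\FF E_2^{0,*}\ge 1$, we get $\dim_\FF E_2^{*,*}\ge 4$ with the correct column distribution, and then the counting argument of the previous paragraph upgrades this to the statements about $E_\infty$ and forces collapse. Verifying that $E_2^{*,*}$ really has this tensor-product shape over $\FF x\oplus \FF y$ — i.e. that the $d_1$-differential is "constant along the $I^\bullet/I^{\bullet+1}$-direction" — is the computational heart, and it is exactly the content of \cref{lem:equivisos} together with $H^*(BP)$-linearity of $d_1$.
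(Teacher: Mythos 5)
Your proposal has a genuine gap in the first part, and the paper's proof takes a fundamentally different and more elementary route.

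The key claim in your third paragraph — that $E_2^{*,*}\cong E_2^{0,*}\otimes \Lambda(\FF x\oplus \FF y)$ as a bigraded vector space, ``exactly the content of \cref{lem:equivisos} together with $H^*(BP)$-linearity of $d_1$'' — is not true for general Koszul homology, and it would be circular to assume it here. The $E_1$-page is (after a shift) the Koszul complex of the graded $H^*(BP)$-module $D^*=E_1^{0,*}$ with respect to $x_1,x_2$, and the homology of a Koszul complex factors as $H_0\otimes\Lambda$ only when $D^*$ is a quotient of $H^*(BP)$ by an ideal generated by a regular sequence, i.e.\ a complete intersection quotient. That fact is what \cref{lem:parameteridealFromPerfectComplex} proves, and its proof relies on \cref{lem:spectralsequence_collapse}; you cannot invoke it to prove \cref{lem:spectralsequence_collapse} itself. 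Concretely, for $D^*=\FF[x,y]/(x^2,xy,y^2)$ the Koszul homology has dimensions $1,3,2$ (total $6$), not the shape $1,2,1$ times $\dim E_2^{0,*}$, and the paper's Remark after \cref{cor:classification_four_dim_perfect} points to \cite[Example~3.8]{ruepingstephan2022} precisely to show that six-dimensional total homology does occur. So your ``balanced columns'' step needs a different argument; \cref{lem:equivisos} equates the columns on $E_1$, but passing to $d_1$-homology breaks that symmetry.

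The paper avoids this entirely. It sets $d=\dim_\FF H^*(C^*/C^*I)$ and observes that, for degree reasons, the lowest nonzero entry of $E_1^{0,*}$ and the highest nonzero entry of $E_1^{2,*}$ must survive to $E_\infty$. Hence the rank of $d_1\colon E_1^{1,*}\to E_1^{0,*-1}$ is at most $d-1$, and so is the rank of $d_1\colon E_1^{2,*}\to E_1^{1,*-1}$. Therefore $\dim E_2^{1,*}\ge 2d-2(d-1)=2$, and since column $1$ can support no further differentials, this gives $\dim E_\infty^{1,*}\ge 2$; together with the surviving corners one gets all three bounds and $\dim H^*(C^*)\ge 4$. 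When $\dim H^*(C^*)=4$ these inequalities become equalities, forcing both $d_1$-ranks to be exactly $d-1$, so $E_2^{0,*}$ and $E_2^{2,*}$ are each one-dimensional (just the surviving corners), and $d_2$ must vanish.

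Your collapse argument has a secondary gap of the same flavor: you say a nonzero $d_2$ would drop $\dim E_3^{2,*}$ below the bound $\ge 1$, but that only follows once you know $\dim E_2^{2,*}=1$, which you have not established. The rank count above supplies exactly this. The outline of the collapse argument (only $d_2$ from column $2$ to column $0$ is possible, and it would kill the surviving corners) is correct once that input is in place.
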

\begin{proof}
The assumption $H^*(C^*)\neq 0$ implies $H^*(C^*/C^*I)\neq 0$. Indeed, as $\FF[P]$ is a commutative, noetherian, local ring, this can be proved for instance by considering the minimal free resolution of the perfect cochain complex $C^*$ over $\FF [P]$; see e.g.~\cite[Chapter~2, 2.4~Theorem]{roberts1980}.

As graded vector spaces, we have
\[E_1^{0,*}\cong H^*(C^*/C^*I),\quad E_1^{1,*}\cong H^*(C^*/C^*I)^2, \quad E_1^{2,*}\cong H^*(C^*/C^*I).\]
Since the differentials have bidegree $d_r=(-r,1)$,
the lowest nonzero entry of $E_1^{0,*}$ and the highest nonzero entry of $E_1^{2,*}$ survive to $E_\infty$. Setting $d\coloneqq \dim_\FF H^*(C^*/C^*I)$, the total rank of 
$d_1 \colon E_1^{1,*}\rightarrow E_1^{0,*-1}$
is at most $d-1$ and the total rank of
$d_1\colon E_1^{2,*}\rightarrow E_1^{1,*-1}$
is at most $d-1$ as well.
It follows that $\dim_\FF(E_2^{1,*})\ge 2d-(d-1)-(d-1)=2$. For degree reasons, the classes in the middle column $E_2^{1,*}$ cannot support any further differentials and thus survive to the $E_\infty$-page. Together with the two surviving corners from the $E_1$-page, we deduce $\dim_\FF H^*(C)=\dim_\FF E_\infty \geq 4$.

If $\dim_\FF H^*(C)=4$, then $\dim_\FF E_2^{1,*}=2$. It follows that the total ranks of $d_1$ from $E_1^{1,*}$ to $E_1^{0,*}$ and of $d_1$ from $E_1^{2,*}$ to $E_1^{1,*}$ are both $d-1$. Hence $E_2^{0,*}$ consists only of the surviving bottom left corner, $E_2^{2,*}$ consists only of the surviving top right corner, and $d_r=0$ for $r\geq 2$.
\end{proof}

\begin{remark}\label{rem:whereAreTheSurvivingClasses}
If $\dim_\FF(H^*(C^*))=4$ and the degrees of the elements of a homogeneous basis are $0,m,n,t$ with $0\le m\le n \le t$ counted with multiplicities, then the surviving classes on $E_\infty$ sit in bidegrees $(0,0),(1,m),(1,n),(2,t)$. Moreover, $H^0(C^*/C^*I)$ is one-dimensional since $H^0(C^*/C^*I)\cong E_1^{0,0}=E_\infty^{0,0}$.
\end{remark}

Recall that $H^*(BP)\cong \FF[x_1,x_2]$ is a polynomial ring with generators $x_1, x_2$ of degree one.

\begin{proposition}\label{lem:parameteridealFromPerfectComplex}
Let $C^*$ be a perfect $\FF[G]$-cochain complex such that its total homology is four-dimensional with basis elements in degrees $0\leq m\leq n \leq t$. Let $J$ be the annihilator ideal of the graded $H^*(BP)$-module $\Ext_{\FF P}^*(\FF, C^*)$. Then 
\begin{enumerate}
    \item \label{lem:parameteridealFromPerfectComplexi} there is a $Q$-equivariant isomorphism of graded $H^*(BP)$-modules $$\Ext_{\FF P}^*(\FF, C^*)\cong \Ext_{\FF P}^0(\FF, C^*)\otimes_\FF (H^*(BP)/J);$$
    \item \label{lem:parameteridealFromPerfectComplexii} $J$ is generated by a regular sequence of two parameters;
    \item \label{lem:parameteridealFromPerfectComplex3} $H^*(BP)/J$ is a complete intersection; 
    \item \label{lem:parameteridealFromPerfectComplex4} there is an isomorphism of graded $\FF[Q]$-modules
    $$\Ext_{\FF [P]}^0(\FF, C^*)\otimes_\FF \Lambda(\Sigma^{-1} J/(x_1,x_2)J) \cong \gr H^*(C^*);$$
    \item \label{lem:parameteridealFromPerfectComplexv} $t=m+n$.
\end{enumerate}
\end{proposition}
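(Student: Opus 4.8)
The plan is to extract all five statements from a careful analysis of the collapsed spectral sequence of \cref{lem:spectralsequence_collapse}, combined with standard commutative algebra over the polynomial ring $H^*(BP)$. First I would record the starting data: by \cref{lem:spectralsequence_collapse} and \cref{rem:whereAreTheSurvivingClasses}, the spectral sequence of the perfect complex $C^*$ (viewed via \cref{rem:descritionE1page_as_ext} as $E_{r\ge 1}(\Hom_\FF(\varepsilon^*(G),C^*))$) collapses at $E_2$, with $E_\infty$ supported in bidegrees $(0,0)$, $(1,m)$, $(1,n)$, $(2,t)$. By \cref{rem:descritionE1page_as_ext}, the $E_1$-page in column $0$ is $\Ext_{\FF[P]}^*(\FF,C^*)$ as an $H^*(BP)$-module, and \cref{rem:whereAreTheSurvivingClasses} gives $\dim_\FF\Ext^0_{\FF[P]}(\FF,C^*)=1$. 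The key structural input is \cref{lem:QactonEL0} together with \cref{lem:equivisos}: as graded $\FF[Q]$-modules, $E_1^{1,*}\cong \Ext_{\FF[P]}^*(\FF,C^*)\otimes_\FF (I/I^2)$ and $E_1^{2,*}\cong \Ext_{\FF[P]}^*(\FF,C^*)\otimes_\FF (I^2/I^3)$, where $\dim_\FF I/I^2 = 2$ and $\dim_\FF I^2/I^3=1$. So the whole $E_1$-page (in the three nonzero columns) is determined by the single $H^*(BP)$-module $M\coloneqq\Ext_{\FF[P]}^*(\FF,C^*)$.

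Next I would prove \eqref{lem:parameteridealFromPerfectComplexi}. Write $M^0\coloneqq \Ext^0_{\FF[P]}(\FF,C^*)$, a one-dimensional $Q$-representation sitting in degree $0$. There is a canonical map of graded $H^*(BP)$-modules $H^*(BP)\otimes_\FF M^0 \to M$, $x\otimes c \mapsto xc$, which is $Q$-equivariant; its cokernel is $\coker$ and its kernel is $J\otimes_\FF M^0$ where $J = \Ann_{H^*(BP)}(M)$ (here I use that $M^0$ is $1$-dimensional, so annihilating $M$ is the same as annihilating the cyclic submodule generated by a generator of $M^0$). To see this map is surjective I would argue that the $d_1$-differential $d_1\colon E_1^{1,*}\to E_1^{0,*-1}$, being $H^*(BP)$-linear and of the form $M\otimes(I/I^2)\to M$, together with the collapse forcing $E_2^{0,*}$ to be just the corner $(0,0)$, shows that every element of $M$ in positive degree is a sum of products $x\cdot m'$; more carefully, the surjectivity is equivalent to $M$ being generated by $M^0$ as an $H^*(BP)$-module, which follows because $E_2^{0,>0}=0$ means the image of $d_1$ is all of $M^{>0}$, and the image of $d_1$ lands in $H^{>0}(BP)\cdot M$ by $H^*(BP)$-linearity and degree reasons. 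This gives the short exact sequence and hence the isomorphism $M\cong M^0\otimes_\FF (H^*(BP)/J)$ of \eqref{lem:parameteridealFromPerfectComplexi}.

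For \eqref{lem:parameteridealFromPerfectComplexii} and \eqref{lem:parameteridealFromPerfectComplex3}: since $M$ is the homology of a perfect $\FF[P]$-complex with finite-dimensional total homology, $M$ is finite-dimensional over $\FF$, so $H^*(BP)/J$ is a finite-dimensional graded quotient of $\FF[x_1,x_2]$, i.e., $J$ is $(x_1,x_2)$-primary of height $2$. To see $J$ is generated by a \emph{regular} sequence of two elements, I would compute Hilbert/Poincaré series: the collapse of the spectral sequence gives $\dim_\FF H^*(C^*) = \dim_\FF E_\infty = 4$, and more precisely the graded Euler-characteristic-type identity relating the Poincaré series of $M$ (appearing in three columns, tensored with $I/I^2$ and $I^2/I^3$) with that of $H^*(C^*)$ shows that the Poincaré series of $H^*(BP)/J$ equals $(1-s^{a})(1-s^{b})/(1-s)^2$ for some $a,b$; a graded Artinian quotient of $\FF[x_1,x_2]$ with this Poincaré series must be a complete intersection $\FF[x_1,x_2]/(g_1,g_2)$ with $\deg g_i = a,b$ — this is where I expect the only real subtlety, and I would lean on the fact that for two variables, the minimal number of generators of $J$ is at most $2$ by Krull's height theorem, and two elements cutting out something $0$-dimensional in a $2$-dimensional Cohen–Macaulay ring automatically form a regular sequence; so once I know $J$ needs exactly two generators (equivalently $\dim_\FF J/(x_1,x_2)J = 2$), parts \eqref{lem:parameteridealFromPerfectComplexii} and \eqref{lem:parameteridealFromPerfectComplex3} follow. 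The count $\dim_\FF J/(x_1,x_2)J = 2$ I would read off from the surviving middle column: $E_\infty^{1,*}$ is two-dimensional in degrees $m,n$, and chasing through the isomorphisms it is identified with $\Sigma\big(J/(x_1,x_2)J\big)\otimes_\FF M^0$ (the $d_1$-differentials precisely kill the decomposables), so $J$ is minimally generated by two elements of degrees $m+1$, $n+1$.

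Finally, \eqref{lem:parameteridealFromPerfectComplex4} and \eqref{lem:parameteridealFromPerfectComplexv} come together. Since $E_{r}=E_\infty$ for $r\ge 2$ and $E_\infty$ computes the associated graded of $H^*(C^*)$ for the induced filtration, we get $\gr H^*(C^*) \cong E_\infty^{0,*}\oplus E_\infty^{1,*}\oplus E_\infty^{2,*}$ as graded $\FF[Q]$-modules. By the identifications above, $E_\infty^{0,*}\cong M^0$ (degree $0$), $E_\infty^{1,*}\cong \Sigma(J/(x_1,x_2)J)\otimes_\FF M^0$, and $E_\infty^{2,*}$ is one-dimensional in degree $t$; identifying $E_\infty^{2,*}$ with $\Sigma^2(J/(x_1,x_2)J)^{\wedge 2}\otimes M^0 = \Lambda^2(\Sigma^{-1}J/(x_1,x_2)J)$ shifted appropriately — using that $\Lambda^2$ of a $2$-dimensional space is $1$-dimensional and that the top exterior power is the product of the two generator degrees — yields the exterior-algebra description $\gr H^*(C^*)\cong M^0\otimes_\FF \Lambda(\Sigma^{-1}J/(x_1,x_2)J)$, which is \eqref{lem:parameteridealFromPerfectComplex4}. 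Comparing degrees on both sides of this isomorphism: the left side has its top class in degree $t$, the right side has its top class in degree $(m+1-1)+(n+1-1) = m+n$ (the two exterior generators sit in degrees $m$ and $n$ after the $\Sigma^{-1}$-shift of the degree-$(m+1)$, $(n+1)$ generators of $J$). Hence $t = m+n$, proving \eqref{lem:parameteridealFromPerfectComplexv}. The main obstacle throughout is bookkeeping: keeping the filtration-degree grading $k$ and the homological grading $t$ separate, tracking the degree shifts introduced by $I/I^2$ (internal degree $1$) and $I^2/I^3$ (internal degree $2$), and checking that all the spectral-sequence identifications are $Q$-equivariant — but each of these is supplied by \cref{lem:QactonEL0}, \cref{lem:equivisos}, and \cref{lem:QactonSS}, so no new equivariance argument is needed.
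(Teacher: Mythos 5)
Your outline follows the same overall strategy as the paper: use the $E_2$-collapse of the spectral sequence, identify the shifted $E_1$-page as a Koszul complex on $x_1,x_2$, and read off the structure of $\Ext_{\FF[P]}^*(\FF,C^*)$ and of $\gr H^*(C^*)$. Your Nakayama-style argument for (i) is fine and is an acceptable variant of the paper's degree-by-degree induction, though in both cases the input that the image of $d_1$ equals $(x_1,x_2)\cdot M$ rests on the explicit Koszul description of the $d_1$-differential (cited in the paper from \cite[Corollary~6.3]{ruepingstephan2022}), not just on ``$H^*(BP)$-linearity and degree reasons.''

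Two points need to be fixed. First, you invoke Krull's height theorem to assert that $J$ has at most two minimal generators; that is not what the height theorem says --- it bounds height by number of generators, not conversely (e.g.\ $(x_1,x_2)^2$ has height $2$ but needs three generators). You do pivot to the correct argument, which is also the paper's: $\dim_\FF J/(x_1,x_2)J = 2$ because it equals $\dim_\FF E_2^{1,*} = 2$ via the first Koszul homology of $H^*(BP)/J$, and graded Nakayama then gives two parameters; regularity and (iii) follow from Cohen--Macaulayness as you say. Second, and more seriously, your treatment of (iv)--(v) has a genuine gap. You identify $E_\infty^{2,*}$ with $\Lambda^2(\Sigma^{-1}J/(x_1,x_2)J)\otimes_\FF M^0$ essentially by observing both are one-dimensional, but this does not establish that their internal degrees agree --- which is precisely the content of (v) and hence cannot be assumed when proving (iv) --- nor that the one-dimensional $Q$-representations agree. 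The paper closes this by observing that the shifted $E_1$-page is $M^0$ tensored with the Koszul complex on $x_1,x_2$ of $H^*(BP)/J$, and invoking \cite[Theorem~2.3.11]{brunsherzog1993}: the Koszul homology algebra of a complete intersection is the free exterior algebra on the first Koszul homology. This gives a natural (hence $Q$-equivariant) identification $E_2^{k,*}\cong M^0\otimes_\FF \Lambda^k(\Sigma^{-1}J/(x_1,x_2)J)$ for all $k$ at once, from which (iv) is immediate and (v) follows by comparing the degree of the top class. Some version of this Koszul-homology-of-a-complete-intersection input is unavoidable here and must be supplied. (Minor slip: the degree of a top exterior class is the \emph{sum} $m+n$ of the two generator degrees, not the product.)
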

\begin{proof}

We show that the map 
\[E_1^{0,0}(C^*)\otimes_\FF E_1^{0,*}(\Hom_\FF(\varepsilon^*(G),\FF))\to E_1^{0,*}(C^*)\] 
from \cref{lem:spectralsequence_is_module} using \cref{rem:descritionE1page_as_ext} is surjective by induction on the degree. In degree zero, the map is surjective by construction. 
Suppose that the map is surjective in degree $i$ for some $i\geq 0$ and let $z\in E_1^{0,i+1}(C^*)$. Since the spectral sequence collapses on the $E_2$-page by \cref{lem:spectralsequence_collapse} and $E_2^{0,>0}(C^*)$ is zero, the $d_1$-differential surjects onto $E_1^{0,>0}(C^*)$. Thus there exists a class $z'\in E_1^{1,i}(C^*)$ with $d_1(z')=z$. It follows from the induction hypothesis and \cref{lem:equivisos} that there exists $z''\in E_1^{0,0}(C^*)\otimes E_1^{1,i}(\varepsilon^*(G))$ that is mapped to $z'$. Then $d_1(z'')$ is mapped to $d_1(z')=z$ which concludes the induction step.

Since $E_1^{0,0}(C^*)$ is one-dimensional, it follows that $E_1^{0,0}(C^*)\otimes_\FF (H^*(BP)/J)\cong \Ext_{\FF [P]}^*(\FF, C^*)$, showing \eqref{lem:parameteridealFromPerfectComplexi}. We will show that the map 
\begin{align}\label{eq:actionmap}
E^{0,0}_1(C^*) \otimes_\FF E^{*,*}_1(\Hom_\FF(\varepsilon^*(G),\FF))\to E^{*,*}_1(C^*)
\end{align}
on the whole $E_1$-page is surjective. Let $f\colon \varepsilon^*(G)\to C^*$ be a representative of a generator of $E_1^{0,0}(C^*)\cong \Ext_{\FF[P]}^0(\FF, C^*)$. This is a map of right $\FF[P]$-cochain complexes. Postcomposing with $f$ induces a map on spectral sequences \[E_1^{*,*}(\Hom_\FF(\varepsilon^*(G),\varepsilon^*(G)))\to E_1^{*,*}(\Hom_\FF(\varepsilon^*(G),C^*))\] over $\FF$ that is isomorphic to the map from \eqref{eq:actionmap}. By \cref{lem:QactonEL0}, its surjectivity on $E_1^{0,*}$ shows that it is surjective on the whole $E_1$-page.

After shifting the $i$-th column of the $E_1$-page for $\Hom_\FF(\varepsilon^*(G),\FF)$ up by $i$, we obtain the Koszul complex of $H^*(BP)$ by the description of the differential from \cite[Corollary~6.3]{ruepingstephan2022}.

After the same shift, the $E_1$-page for $C^*$ is the tensor product of the module $E_1^{0,0}(C^*)$ with the Koszul complex of $H^*(BP)/J$.

The first Koszul homology of $H^*(BP)/J=\FF[x_1,x_2]/J$ for the sequence $x_1,x_2$ is $J/(x_1,x_2)J$ (see e.g.~\cite[Proof of Lemma~1.4.15]{gulliksenlevin1969}). Since $E_2^{1,*}(C^*)$ is two-dimensional by \cref{lem:spectralsequence_collapse}, so is $J/(x_1,x_2)J$. Thus $J$ is generated by two parameters by the graded Nakayama lemma. Since $H^*(BP)/J$ is finite-dimensional and the Krull dimension of $H^*(BP)$ is $2$, the ideal $J$ is generated by a system of parameters. Since $H^*(BP)$ is Cohen-Macaulay, the two parameters form a regular sequence, showing \eqref{lem:parameteridealFromPerfectComplexii}. The ring $H^*(BP)/J$ is a complete intersection of embedding dimension $2$; see \cite[Theorem~2.3.3]{brunsherzog1993}. Thus \eqref{lem:parameteridealFromPerfectComplex3} holds. 

The Koszul homology of the complete intersection $H^*(BP)/J$ is the exterior algebra $\Lambda(J/(x_1,x_2)J)$; see \cite[Theorem~2.3.11]{brunsherzog1993}. Since the spectral sequence collapses on the $E_2$-page, we obtain that $\gr(H^*(C^*))\cong \bigoplus E_2^{k,*}(C^*)$ is isomorphic to $E_1^{0,0}(C^*) \otimes \Lambda(\Sigma^{-1} J/(x_1,x_2)J)$ showing \eqref{lem:parameteridealFromPerfectComplex4}. 
The generators of $\Sigma^{-1} J/(x_1,x_2)J$ have degrees $m,n$. Thus their exterior product has degree $t=m+n$, showing \eqref{lem:parameteridealFromPerfectComplexv}.
\end{proof}

\begin{remark}\label{rem:43forSpaces}
   If $C^*$ as in \cref{lem:parameteridealFromPerfectComplex} is given by the cochains of a finite, free $G$-CW complex $X$, then the spectral sequence is multiplicative by \cite[Theorem~6.9]{ruepingstephan2022}. The one-dimensional $Q$-representation $\Ext_{\FF [P]}^0(\FF, C^*) \cong H^0(X/P)$ is trivial since the $Q$-action on the space $X/P$ fixes $1\in H^0(X/P)$. In this case \cref{lem:parameteridealFromPerfectComplex} \eqref{lem:parameteridealFromPerfectComplex4} is an isomorphism of graded rings
   \[\Lambda(\Sigma^{-1} J/(x_1,x_2)J) \cong \gr H^*(C^*).
   \]   
\end{remark}

It is not known for which dimensions $m$, $n$ the group $A_4$ can act freely on a finite, CW complex $X$ homotopy equivalent to $S^m\times S^n$. 
\begin{remark} 
The obstruction result
\cite[Theorem~7.5]{ruepingstephanyalcin2022} for $G=A_4$ has the assumption that $X$ is a finite, free $G$-CW complex with cohomology ring $H^*(X;\FF_2)\cong H^*(S^m\times S^n;\FF_2)$ for some $0<m<n$. As explained on \cite[page~31]{ruepingstephanyalcin2022}, it suffices that the total cohomology of $X$ is four-dimensional and such that the product of the two middle classes is the top class.
This last assumption always holds by \cref{rem:43forSpaces}.
\end{remark}

\section{An equivariant BGG correspondence}\label{sec:equivariantbgg}

We will provide an explicit equivariant BGG correspondence in \cref{thm:equivariantBGG}. There are different versions of the BGG correspondence. A related equivariant BGG correspondence is in \cite[Theorem~9.1.2]{floystad2001}. We are interested in Carlsson's from \cite{carlsson1986}. In particular, we consider the exterior algebra as an ungraded algebra and work in characteristic $2$.  Similarly, there are many results on Koszul duality in the literature (see e.g.~\cite{avramov2013}). We have not found a general statement that directly provides our equivariant BGG correspondence.

We begin with two general results for skew group algebras $A\ast Q$. We will use them for $A$ an exterior algebra. 

\subsection{Augmented skew group algebras}
Let $k$ be a field and $A$ an augmented $k$-algebra. Let $Q$ be a finite group acting on the augmented $k$-algebra $A$. We write $\Psi(q)(a)$ for the left action of $q\in Q$ on $a\in A$. Recall that the skew group algebra $A\ast Q$ is the $k$-algebra given by the free $A$-module $\oplus_{q\in Q} A q$ with basis $Q$ and multiplication $(a q)(b p)= (a \Psi(q)(b)) (qp)$ for $a,b \in A$ and $q,p\in Q$. In contrast to the notation for crossed product algebras, we just write $q\in A\ast Q$ instead of $\overline{q}\in A\ast Q$ for $q\in Q$.

Note that $A$ is a right $A\ast Q$-module via $b\cdot(aq)= \Psi(q^{-1})(ba)$. Since this does not hold for crossed product algebras, we restrict to skew group algebras. The right $A\ast Q$-action on $A$ does not commute with the left $A$-action, i.e., $A$ is not an $A$-$A\ast Q$-bimodule and the following result is not just a formal consequence of bimodule structures. We transform the right $Q$-action from \cref{lem:homcomplexQmodule} to a left $Q$-action.
\begin{lemma}\label{lem:AastQstructureOnHoms}
    For any cochain complex $C^*$ of right $A\ast Q$-modules, the hom complex $\Hom_A(C^*,A)$ is a cochain complex of left $A\ast Q$-modules with left $A$-action coming from the $A$-$A$-bimodule structure on $A$ and $Q$-action $qf\coloneqq fq^{-1}$, i.e., $(qf)(x)= f(xq)q^{-1}$.
\end{lemma}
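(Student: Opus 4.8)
The plan is to verify directly that the proposed formulas define a left $A\ast Q$-module structure on $\Hom_A(C^*,A)$ compatible with the differential, building on the right $Q$-module structure already established in \cref{lem:homcomplexQmodule}. First I would record the left $A$-action: for $a\in A$ and $f\in \Hom_A(C^*,A)$, set $(af)(x) = a\cdot f(x)$, using the left multiplication of $A$ on itself. This is right $A$-linear in $f$ because left and right multiplication on $A$ commute, so $(af)(xb) = a\cdot f(xb) = a\cdot(f(x)b) = (a\cdot f(x))b = (af)(x)b$. Next I would observe that for a skew group algebra, the right $Q$-action from \cref{lem:homcomplexQmodule} simplifies: since $\overline{q}$ is literally the basis element $q$ with $q^{-1}$ its inverse, the formula $(fq)(x) = f(x\overline{q}^{-1})\overline{q}$ becomes $(fq)(x) = f(xq^{-1})q$, and I would define the left action by $qf \coloneqq fq^{-1}$, so that $(qf)(x) = f(xq)q^{-1}$. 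That this is a left action is immediate from \cref{lem:homcomplexQmodule}: $(q'(qf)) = ((qf)q'^{-1}) = ((fq^{-1})q'^{-1}) = f(q^{-1}q'^{-1}) = f((q'q)^{-1}) = (q'q)f$.

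The substantive point is the interaction between the $A$-action and the $Q$-action, i.e., the single compatibility relation $q(af) = (\Psi(q)(a))(qf)$ that, together with the two separate actions, assembles into an $A\ast Q$-module structure (one checks $(aq)f \coloneqq a(qf)$ is well-defined and associative exactly when this relation holds, using the multiplication $(aq)(bp) = a\Psi(q)(b)\,qp$). So the key computation is
\[
(q(af))(x) = (af)(xq)q^{-1} = \bigl(a\cdot f(xq)\bigr)q^{-1},
\]
and on the other side
\[
\bigl((\Psi(q)(a))(qf)\bigr)(x) = \Psi(q)(a)\cdot (qf)(x) = \Psi(q)(a)\cdot f(xq)q^{-1}.
\]
These need not be equal in general — and indeed the paper flags that $A$ is not an $A$-$A\ast Q$-bimodule — but they become equal after one remembers \emph{which} value $f(xq)q^{-1}$ lands in: the element $f(xq)$ lies in $A$ regarded with its right $A\ast Q$-action $b\cdot(aq) = \Psi(q^{-1})(ba)$, so the expression ``$f(xq)q^{-1}$'' should be read as $f(xq)\cdot q^{-1}$ in that module, which by definition is $\Psi(q)(f(xq))$ — wait, more carefully, $f(xq)\cdot \overline{q^{-1}} = \Psi((q^{-1})^{-1})(f(xq)\cdot 1) = \Psi(q)(f(xq))$. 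Then left multiplication by $a$ versus by $\Psi(q)(a)$ and the multiplicativity of $\Psi(q)$ reconcile the two sides: $a\cdot\Psi(q)(f(xq)) = \Psi(q)\bigl(\Psi(q^{-1})(a)\cdot f(xq)\bigr)$, and matching this up forces the left $A$-action and the $q^{-1}$-twist to be organized precisely as in the statement. I would write this out as a short display chain, being careful throughout to distinguish the plain product in $A$ from the right $A\ast Q$-action.

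Finally I would check that the differential $d_{\Hom}$ is a map of $A\ast Q$-modules. For the $Q$-part this is already in \cref{lem:homcomplexQmodule} (the right $Q$-action commutes with $d_{\Hom}$, hence so does the left action $qf = fq^{-1}$). For the left $A$-action, $d_{\Hom}(af) = d_A\circ(af) - (-1)^{|f|}(af)\circ d_C$, and since $d_A$ is a map of $A$-$A$-bimodules (it is zero when $A$ is concentrated in degree $0$, and in general $A$ here is genuinely just the algebra $A$ in degree $0$, so $d_A = 0$ and the first term vanishes), this reduces to $d_{\Hom}(af) = -(-1)^{|f|}a\cdot(f\circ d_C) = a\cdot d_{\Hom}(f)$. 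The main obstacle is purely bookkeeping: keeping straight the three different ways $A$-elements get multiplied (ordinary product, the exotic right $A\ast Q$-action $b\cdot(aq) = \Psi(q^{-1})(ba)$, and left multiplication) and verifying that the single compatibility relation $q(af) = \Psi(q)(a)(qf)$ holds on the nose; once that identity is nailed down, assembling it into an $A\ast Q$-module statement is formal.
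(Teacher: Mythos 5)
Your proposal is correct and follows essentially the same route as the paper: both reduce the $A\ast Q$-module claim to a single compatibility relation between the left $A$-action and the left $Q$-action (you write it as $q(af)=\Psi(q)(a)(qf)$; the paper uses the equivalent $a(qf)=q(\Psi(q)^{-1}(a)f)$), and both verify it by unwinding $f(xq)q^{-1}$ via the twisted right $A\ast Q$-action on $A$ as $\Psi(q)(f(xq))$ and invoking multiplicativity of $\Psi(q)$. The extra remarks about the differential and the self-correcting detour in the display chain are harmless but could be tightened.
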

\begin{proof}
A left $A$-module structure and a $Q$-action yield an $A\ast Q$-module structure if $a(qf)=q(\Psi(q)^{-1}(a)f)$. This follows from the computation
\begin{align*}
    &(a(qf))(x)=a(f(xq)q^{-1})=a(\Psi(q)(f(xq)))\\
    =&\Psi(q)(\Psi(q)^{-1}(a)f(xq))=(q(\Psi(q)^{-1}(a)f))(x).\qedhere  
\end{align*}    
\end{proof}

\begin{lemma}\label{lem:Borelastensorproduct2} Let $C^*$ and $P^*$ be cochain complexes of right $A\ast Q$-modules. If $P^*$ is bounded above and consists of finitely generated $A$-projective modules and $C^*$ is bounded below (i.e. $C^n=0$ for all small enough $n$), then we have an isomorphism
\[\Phi\colon C^*\otimes_{A} \Hom_{A}(P^*, A)\to \Hom_{A}(P^*, C^*), \qquad c\otimes f\mapsto c\cdot f(\_) ,
\]
of right $k[Q]$-modules.
\end{lemma}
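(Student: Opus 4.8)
The plan is to prove that $\Phi$ is an isomorphism of right $k[Q]$-modules by first checking that it is a well-defined map of $k[Q]$-modules, then reducing to the case $P^* = A\ast Q$ concentrated in a single degree, and finally bootstrapping to general bounded-above complexes of finitely generated projectives.

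First I would verify that $\Phi$ is well-defined on the tensor product over $A$: the formula $c\otimes f\mapsto c\cdot f(\_)$ must be balanced, i.e.\ $ca\otimes f$ and $c\otimes (af)$ (with $af$ the image of $f$ under the \emph{left} $A$-action on $\Hom_A(P^*,A)$ from \cref{lem:AastQstructureOnHoms}) have the same image. Here one uses that the left $A$-action on $\Hom_A(P^*,A)$ is the one coming from the $A$-$A$-bimodule structure on $A$, so $(af)(x) = af(x)$, and both sides send $x$ to $ca\cdot f(x)$. Compatibility with the differentials is a routine sign check. For $Q$-equivariance I would use the description of the $Q$-action on $\Hom_A(P^*,C^*)$ from \cref{lem:homcomplexQmodule}, the $Q$-action on $C^*\otimes_A \Hom_A(P^*,A)$ obtained from \cref{lem:tensoroverAisQmodule} applied to the left $A\ast Q$-module $\Hom_A(P^*,A)$ of \cref{lem:AastQstructureOnHoms}, and simply compute that $\Phi((c\otimes f)q) = (\Phi(c\otimes f))q$; this is a direct manipulation with the defining formulas $c\cdot f(\_)$ and $(fq)(x) = f(x\overline{q}^{-1})\overline{q}$.

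Next I would check that $\Phi$ is an isomorphism. Both source and target are additive in $P^*$ in a way compatible with $\Phi$, and by the bounded-above and bounded-below hypotheses together with the standard spectral sequence / exhaustive filtration argument (filter $P^*$ by stupid truncations; each $C^n\otimes_A(-)$ and $\Hom_A(-,C^*)$ behaves well on finite pieces because $C^*$ is bounded below so only finitely many terms contribute in each total degree), it suffices to treat the case where $P^*$ is a single finitely generated projective $A$-module $P$ in one degree. Since $\Phi$ is natural and additive in $P$, and every finitely generated projective is a summand of a finite free module, it suffices to treat $P = A\ast Q$ (or, more elementarily, $P = A$ with the regular right $A\ast Q$-structure — but note $A$ itself need not be an $A\ast Q$-module, so one should take $P$ a free $A\ast Q$-module of rank one, which is free of rank $|Q|$ over $A$). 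For $P = A\ast Q$ one computes $\Hom_A(A\ast Q, A)\cong \bigoplus_{q\in Q} A\overline{q}$ directly as left $A\ast Q$-modules, and both sides of $\Phi$ reduce to $C^*$ itself (up to a reindexing by $Q$), where $\Phi$ is visibly bijective.

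The main obstacle I expect is \emph{not} the isomorphism statement, which is the usual tensor-hom adjunction for finitely generated projectives, but rather keeping the two intertwined module structures straight: $\Hom_A(P^*,A)$ carries a \emph{left} $A$-action (from $A$ as a bimodule over itself) and a $Q$-action twisted into a left action via $qf = fq^{-1}$, and these combine into a left $A\ast Q$-module structure by \cref{lem:AastQstructureOnHoms}; meanwhile $C^*$ is a \emph{right} $A\ast Q$-module, so $C^*\otimes_A \Hom_A(P^*,A)$ gets a right $Q$-action by \cref{lem:tensoroverAisQmodule}, and on the target $\Hom_A(P^*,C^*)$ has the right $Q$-action from \cref{lem:homcomplexQmodule}. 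The care is entirely in checking that $\Phi$ matches these: concretely, that $\Phi((c\otimes f)q)(x) = c\overline q\cdot (\overline q^{-1}f)(x\overline q^{-1})\overline q$ equals $(c\cdot f(\_))q\,(x) = c\cdot f(x\overline q^{-1})\overline q$, which comes down to the identity $(\overline q^{-1}f)(y) = \overline q^{-1}\cdot f(y\overline q^{-1})\overline q$ unwound from \cref{lem:AastQstructureOnHoms} — but since we are in a skew group algebra, $\overline q$ is just $q$ and $\gamma\equiv 1$, so all the $\gamma$-cocycle terms drop out and the bookkeeping is manageable.
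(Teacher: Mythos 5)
Your overall strategy — check $\Phi$ is a $Q$-equivariant chain map by formula, then check bijectivity by reducing to the single-degree, single-projective case using boundedness — matches the paper's proof, and your bookkeeping discussion is correct. However, there is a subtle flaw in the reduction step. You propose to reduce, by additivity and naturality, to $P = A\ast Q$ (free $A\ast Q$-module of rank one). This reduction is not available: the hypothesis is that each $P^m$ is finitely generated and projective \emph{over $A$}, i.e.\ a direct summand of a finite free $A$-module $A^n$ as an $A$-module, not a direct summand of a free $A\ast Q$-module as an $A\ast Q$-module. So naturality and additivity of $\Phi$ within the category of $A\ast Q$-modules does not put you in the case $P = A\ast Q$.

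The fix is the observation that the paper uses implicitly and that also resolves your worry about ``$A$ itself need not be an $A\ast Q$-module'': once you have verified by direct computation that $\Phi$ is $k[Q]$-linear and a chain map, the remaining assertion — that $\Phi$ is bijective — is a question about the underlying graded $A$-modules only, and no $A\ast Q$-structure on $P^*$ is needed for it. At that stage you may freely use that for any right $A$-module $M$ and finitely generated projective $A$-module $P$, the evaluation map $M\otimes_A\Hom_A(P,A)\to\Hom_A(P,M)$ is an isomorphism (reduce to $P=A$ as $A$-modules). The paper then finishes by noting that in each fixed total degree $n$ the boundedness hypotheses make $\prod_m\Hom_A(P^m,C^{m+n})$ a finite product, hence a direct sum, so $\Phi^n$ is a finite direct sum of the isomorphisms above. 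Your spectral-sequence/truncation language is doing the same job but is heavier than necessary; the degreewise finiteness argument is more elementary and avoids any passage to homology.
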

\begin{proof}
The left-hand side inherits a $k[Q]$-module structure by \cref{lem:tensoroverAisQmodule} and the right-hand side inherits a $k[Q]$-module structure by \cref{lem:homcomplexQmodule}.

We check that $\Phi$ commutes with the $Q$-action:
\begin{align*}
\Phi((c\otimes_A f)q)(x)=&\Phi( (cq\otimes q^{-1}f))(x)= (cq) (\Psi(q^{-1})(f(xq^{-1})) \\
=& c f(xq^{-1})q = (\Phi(c\otimes f)q)(x)
\end{align*}
Note that $\Phi$ is a chain map, thus it suffices to check that $\Phi$ is an isomorphism of graded modules.

For any right $A\ast Q$-module $M$ and any finitely generated, $A$-projective right $A\ast Q$-module $P$, the map
\[
M\otimes_A \Hom_A(P,A)\to \Hom_A(P, M),\qquad m\otimes f \mapsto  m f(\_)
\]
is an isomorphism. 

For a fixed degree $n$, consider the map
$$\Phi^n\colon \bigoplus_{m}C^{n+m}\otimes \Hom_A(P^{m},A)\to \prod_{m} \Hom_A(P^m,C^{m+n}).$$
The finiteness assumptions ensure that the right-hand side is a direct sum and thus the map is a direct sum of isomorphisms as above and hence an isomorphism.
\end{proof}

\begin{remark}\label{rem:opAndtwistedProducts}
We write $A^{\op}$ for the opposite algebra. The $Q$-action on $A$ induces a left action on $A^{\op}$ and $A^\op\ast Q\cong (A\ast Q)^{\op}$ via $a q\mapsto (\Psi(q^{-1})(a)) q^{-1}$. Thus results for right $A\ast Q$-modules can be translated to results for left $A^{\op}\ast Q$-modules. If $A$ is commutative, then $A^{\op}\ast Q=A\ast Q$.
\end{remark}
For instance, $A$ is a left $A\ast Q$-module via $(aq)\cdot b= a\Psi(q)(b)$.
We will also need that for a left $A\ast Q$-module $M$ and a left $kQ$-vector space $V$, the tensor product $M\otimes_k V$ is a left $A\ast Q$-module via
\[(a q)(m\otimes v) = (a qm)\otimes qv
\]
for $a\in A$, $m\in M$, $v\in V$ and $q\in Q$. 

\subsection{Equivariant BGG correspondence}
We specialize to an exterior algebra and work over a field $\FF$ of characteristic two. Let $V$ be a finite-dimensional vector space over $\FF$ with a left $Q$-action denoted by $qv$ for $q\in Q$ and $v\in V$. The group $Q$ acts on the dual vector space $V^*=\Hom_k(V,k)$ by $(qf)(v)=f(q^{-1}v)$. Choose a basis $y_1,\ldots,y_n$ of $V$ and a dual basis $x_1,\ldots x_n$  of $V^*$. Let $\Lambda$ be the exterior algebra on $V$ and $S=\FF[x_1,\ldots,x_n]$ the symmetric algebra on $V^*$. We consider $\Lambda$ as a graded algebra concentrated in degree $0$ and grade $S$ by $\deg(x_i)=1$ for $1\leq i\leq n$. Since $Q$ acts on $V$ and $V^*$, we obtain induced actions on the algebras $\Lambda$ and $S$. In this subsection, we omit the notation $\Psi$ for these actions.

Carlsson \cite{carlsson1986} established an equivalence of derived categories 
\[
\beta\colon D_{\Lambda\text{-}\perf}(\Lambda) \to D^{hf}_{S\text{-}\perf}(S)
\]
from perfect chain complexes over the exterior algebra $\Lambda$ to finitely generated, free $S$-dg modules $M$ with finite-dimensional total homology.
We provide an equivariant extension in \cref{thm:equivariantBGG} replacing $\Lambda$ by $\Lambda\ast Q$ and $S$ by $S\ast Q$.

\begin{lemma}\label{lem:injectiveresolution} The graded tensor product $\Lambda \otimes_\FF S$ with differential \[d(c\otimes f)=\sum_i c y_i\otimes x_i f \] is a $\Lambda$-injective resolution of $\Hom_\Lambda(\FF, \Lambda)$ as left $\Lambda\ast Q$-module. If $Q$ is of odd order, then $\Lambda\otimes_\FF S$ is a $\Lambda\ast Q$-injective resolution. \end{lemma}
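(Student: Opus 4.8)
The plan is to check the claimed facts in three stages: first that $\Lambda\otimes_\FF S$ with the given differential is a complex of left $\Lambda\ast Q$-modules, second that it is a resolution of $\Hom_\Lambda(\FF,\Lambda)\cong\FF$ (with its natural $\Lambda\ast Q$-structure), and third that each term is $\Lambda$-injective, strengthening this to $\Lambda\ast Q$-injectivity when $|Q|$ is odd.

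First I would describe the module structure. Here $\Lambda\otimes_\FF S$ is graded by the grading of $S$, i.e.\ the degree-$t$ part is $\Lambda\otimes_\FF S_t$. The left $\Lambda$-action is on the first factor, and the left $Q$-action is the diagonal one $q(c\otimes f)=(qc)\otimes(qf)$, so $\Lambda\otimes_\FF S$ is a left $\Lambda\ast Q$-module by the formula $(aq)(c\otimes f)=(a\,qc)\otimes(qf)$ recalled at the end of Section~\ref{sec:equivariantbgg}. To see that $d(c\otimes f)=\sum_i cy_i\otimes x_if$ is well-defined (independent of the chosen basis) and $\Lambda\ast Q$-linear up to the sign conventions, the key point is that $\sum_i y_i\otimes x_i\in V\otimes_\FF V^*$ is the image of the identity under $\End_\FF(V)\cong V\otimes_\FF V^*$, hence $Q$-invariant; left multiplication by $\Lambda$ on the first slot obviously commutes with $d$, and $q\,d(c\otimes f)=\sum_i q(cy_i)\otimes q(x_if)=\sum_i (qc)(qy_i)\otimes(qx_i)(qf)=d((qc)\otimes(qf))$ using $Q$-invariance of $\sum_i y_i\otimes x_i$ and the algebra-automorphism property of $q$ on $\Lambda$ and $S$. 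That $d^2=0$ is the standard computation $\sum_{i,j}cy_iy_j\otimes x_ix_jf=0$, since $y_iy_j=-y_jy_i=y_jy_i$ is symmetric in characteristic two while $x_ix_j$ is symmetric, and in fact $y_iy_i=0$; this is the usual Koszul-complex identity.

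Second, exactness. The augmentation $\Lambda\otimes_\FF S\to\FF$ sending $c\otimes f\mapsto \aug_\Lambda(c)\aug_S(f)$ (projection to $\Lambda_0\otimes S_0$) is a map of $\Lambda\ast Q$-modules, and I claim the augmented complex is acyclic. The cleanest argument: as a complex of $\FF$-vector spaces (forgetting the $\Lambda\ast Q$-structure), $(\Lambda\otimes_\FF S, d)$ is precisely the Koszul complex on $V^*$, i.e.\ $\Lambda(V)\otimes S(V^*)$ with the standard Koszul differential, which resolves $\FF$ over $S$ — equivalently, it is the (twisted) total complex of $\bigwedge^\bullet V\otimes S$, the standard $S$-free resolution of $\FF$ up to regrading. (Compare \cite[Corollary~6.3]{ruepingstephan2022}, where this differential appears as the $E_1$-differential identified with a Koszul differential.) Hence $H^*(\Lambda\otimes_\FF S)=\FF$ concentrated in degree $0$, and the surviving class is $1\otimes 1$, on which $Q$ acts trivially, matching $\Hom_\Lambda(\FF,\Lambda)$ as a $Q$-module (note $\Hom_\Lambda(\FF,\Lambda)$ is the socle $\Lambda_{\mathrm{top}}\cong\det V$; one should check the augmentation target is identified correctly, but acyclicity of the complex itself only needs the vector-space statement).

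Third, injectivity of the terms. Since $\Lambda=\FF[P]$ for $P=(\ZZ/2)^n$ is a finite-dimensional Frobenius (indeed self-injective) algebra, injective $=$ projective $=$ free $\Lambda$-modules, and $\Lambda\otimes_\FF S_t$ is a free $\Lambda$-module for each $t$; so $\Lambda\otimes_\FF S$ is a complex of injective $\Lambda$-modules, giving the first assertion. For the second assertion, when $|Q|$ is odd it is invertible in $\FF$, so $\FF[Q]$ is semisimple and restriction/induction along $\Lambda\hookrightarrow\Lambda\ast Q$ is an exact pair with $(\Lambda\ast Q)$ free of rank $|Q|$ over $\Lambda$; consequently a $\Lambda\ast Q$-module that is injective (equivalently projective) over $\Lambda$ is injective over $\Lambda\ast Q$ — concretely, $\Lambda\ast Q$ is itself a Frobenius algebra (tensor/crossed product of Frobenius algebras, or: it is self-injective as a finite-dimensional algebra that is free over the self-injective $\Lambda$ with $\FF[Q]$ semisimple), and a module that is $\Lambda$-free is a summand of a free $\Lambda\ast Q$-module after applying $\mathrm{Ind}_\Lambda^{\Lambda\ast Q}$ and using the averaging idempotent $\tfrac{1}{|Q|}\sum_{q}$. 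Either way each $\Lambda\otimes_\FF S_t$ is $\Lambda\ast Q$-injective.

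The main obstacle is the third step — the passage from $\Lambda$-injectivity to $\Lambda\ast Q$-injectivity — since it is exactly where the odd-order hypothesis is used and where one must be careful that "injective over the big algebra" really follows and is not merely "injective over the small algebra". I would handle it by the Frobenius/semisimplicity route above: $|Q|$ invertible makes $\mathrm{Ind}$ and $\mathrm{Res}$ between $\Lambda$-mod and $\Lambda\ast Q$-mod both exact and makes every $\Lambda\ast Q$-module that splits over $\Lambda$ split off the corresponding induced module, so $\Lambda$-free $\Rightarrow$ $\Lambda\ast Q$-projective $=$ $\Lambda\ast Q$-injective. The other steps are essentially the standard Koszul-complex verification, already present in a non-equivariant form in \cite{ruepingstephan2022}, combined with the elementary observation that $\sum_i y_i\otimes x_i$ is $Q$-invariant.
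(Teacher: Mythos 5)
Your steps 1 and 3 (the $\Lambda\ast Q$-module structure via $Q$-invariance of $\sum_i y_i\otimes x_i$, and the passage from $\Lambda$-injectivity to $\Lambda\ast Q$-injectivity via self-injectivity of the skew group algebra for $|Q|$ odd) are correct and follow the same route as the paper, which cites Reiten--Riedtmann for the latter.

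The gap is in your step 2, the identification of what the complex resolves. You write down a coaugmentation $\Lambda\otimes_\FF S\to\FF$, $c\otimes f\mapsto \aug_\Lambda(c)\aug_S(f)$, and assert the surviving class in $H^0$ is $1\otimes 1$ with trivial $Q$-action. Both are wrong. First, the map goes the wrong way for an injective resolution: the coaugmentation must be of the form $M\hookrightarrow I^0$, i.e.\ $\Hom_\Lambda(\FF,\Lambda)\hookrightarrow\Lambda\otimes S^0=\Lambda$. Second, and more substantively, the differential $d(c\otimes f)=\sum_i cy_i\otimes x_i f$ \emph{wedges} with $y_i$ rather than contracting, so this is not literally the Koszul resolution of $\FF$ over $S$ and the surviving cycle in $\Lambda\otimes S^0$ is not $1\otimes 1$: indeed $d(1\otimes 1)=\sum_i y_i\otimes x_i\neq 0$. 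The kernel of $d$ in degree $0$ is $\{c\in\Lambda : cy_i=0\ \forall i\}$, which is the socle $\Lambda^n(V)\cong\det V$. Hence $H^0(\Lambda\otimes_\FF S)\cong\Lambda^n(V)\cong\Hom_\Lambda(\FF,\Lambda)$, carrying the (generally nontrivial) determinant $Q$-action, and the correct coaugmentation is the inclusion of the socle into $\Lambda$. You half-notice this in your parenthetical remark that $\Hom_\Lambda(\FF,\Lambda)\cong\det V$, which contradicts your earlier ``on which $Q$ acts trivially'', but you do not resolve the inconsistency. Since the lemma explicitly asserts the complex resolves $\Hom_\Lambda(\FF,\Lambda)$ as a $\Lambda\ast Q$-module, this identification is part of what must be proved; the paper handles it in one line by directly computing $H^0\cong\Lambda^n(V)\cong\Hom_\Lambda(\FF,\Lambda)$.
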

\begin{proof}
    Nonequivariantly, it is well-known that $d$ is a differential on $\Lambda\otimes_\FF S$ with homology concentrated in degree zero; see \cite[(II)~Proposition~2]{carlsson1983}. The differential commutes with the action of $\Lambda$ by definition. It commutes with the $Q$-action as well as we show now. The element $\sum_{i=1}^n y_i \otimes x_i$ in $V\otimes V^*$ is fixed by $Q$ since under the equivariant isomorphism of vector spaces $V\otimes V^*\cong \End(V)$, $v\otimes f\mapsto f(-)v$, the sum corresponds to $\id_V$. The map
\[
    V\otimes V^*\otimes \Lambda \otimes S\to \Lambda \otimes S, \quad 
    y\otimes x\otimes c\otimes f \mapsto cy\otimes xf,
\]
is a homomorphism of $\FF [Q]$-modules. It follows that 
\[\sum_i q(c)q(y_i)\otimes q(x_i)q(f)= \sum_i q(c)y_i\otimes x_iq(f)\] 
since the left-hand side is the image of $q(\sum_{i
=1}^n y_i\otimes x_i)\otimes qc\otimes qf$ and the right-hand side is the image of $(\sum_{i=1}^n y_i\otimes x_i)\otimes qc\otimes qf$. We conclude that the differential is $Q$-equivariant:
\begin{align*}
    q(d(c\otimes f)) &= \sum_{i=1}^n  q(c)q(y_i)\otimes q(x_i)q(f) = \sum_{i=1}^n q(c)y_i \otimes x_iq(f) \\
    &= d(q(c) \otimes q(f)) = d(q(c\otimes f)). 
\end{align*}
Thus $\Lambda\otimes_\FF S$ is a cochain complex of $\Lambda\ast Q$-modules. The modules are finitely generated and free over $\Lambda$. Since $\Lambda$ is self-injective, the modules are injective over $\Lambda$ so that $\Lambda\otimes_\FF S$ is a $\Lambda$-injective resolution. We have \[H^0(\Lambda \otimes_\FF S)\cong\Lambda^n(V)\cong \Hom_\Lambda(\FF, \Lambda)\] as left $\Lambda\ast Q$-modules. 

Suppose the order of $Q$ does not divide the characteristic of $\FF$. Then the skew group algebra $\Lambda\ast Q$ is self-injective as well; see \cite[Theorem~1.1 and 1.3]{reitenriedtmann1985}. Moreover, a $\Lambda\ast Q$-module is projective over $\Lambda\ast Q$ if and only if it is projective over $\Lambda$. Thus $\Lambda\otimes_\FF S$ consists of finitely generated projective modules and since $\Lambda\ast Q$ is self-injective, it is indeed an injective resolution.
\end{proof}

To emphasize the twisted differential, we write $\Lambda\tilde\otimes_\FF S$ for the cochain complex from \cref{lem:injectiveresolution}.

\begin{lemma}\label{lem:betafunctoristensorproduct} 
    The hom complex \[\varepsilon^*=\Hom_{\Lambda^{\op}}(\Lambda\tilde{\otimes}_\FF S, \Lambda)\] of left $\Lambda$-module homomorphisms is a $\Lambda$-projective resolution of $\FF$  as right $\Lambda\ast Q$-module. For this $\varepsilon^*$ and any bounded below cochain complex $C^*$ of right $\Lambda\ast Q$-modules, there is a natural isomorphism
    \[\Hom_\Lambda(\varepsilon^*,C^*)\cong C^*\tilde{\otimes}_\FF S,\]
    where $C^*\tilde{\otimes}_\FF S$ is $C^*\otimes_\FF S$ with differential given by $d(c\otimes f)=(dc)\otimes f+\sum_i c y_i\otimes x_i f$.
\end{lemma}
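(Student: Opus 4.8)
The plan is to deduce both claims simultaneously from \cref{lem:injectiveresolution} together with the adjunction between $\otimes$ and $\Hom$. First I would explain why $\varepsilon^*$ is a $\Lambda$-projective resolution of $\FF$. By \cref{lem:injectiveresolution}, $\Lambda\tilde\otimes_\FF S$ is a complex of finitely generated free left $\Lambda$-modules resolving $\Lambda^n(V)\cong\Hom_\Lambda(\FF,\Lambda)$. Applying $\Hom_{\Lambda^{\op}}(-,\Lambda)$ to a finitely generated free left $\Lambda$-module yields a finitely generated free right $\Lambda$-module, and since $\Lambda$ is self-injective (Frobenius), this functor is exact on the complex of frees; hence $\varepsilon^*$ has homology $\Hom_{\Lambda^{\op}}(\Hom_\Lambda(\FF,\Lambda),\Lambda)\cong\FF$ concentrated in a single degree. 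The right $\Lambda\ast Q$-module structure on $\varepsilon^*$ comes from the left $\Lambda\ast Q$-structure on $\Lambda\tilde\otimes_\FF S$ via \cref{lem:AastQstructureOnHoms} (applied with $A=\Lambda$, which is commutative, so $A^{\op}\ast Q=A\ast Q$; compare \cref{rem:opAndtwistedProducts}), and the identification with $\FF$ as a right $\Lambda\ast Q$-module is via the augmentation.

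Next I would unwind $\Hom_\Lambda(\varepsilon^*,C^*)$. Write $\varepsilon^* = \Hom_{\Lambda^{\op}}(\Lambda\tilde\otimes_\FF S,\Lambda)$. Forgetting the twisted differential for the moment, as a bigraded module $\Lambda\tilde\otimes_\FF S = \Lambda\otimes_\FF S$, and since $S$ has a homogeneous $\FF$-basis of monomials, $\Hom_{\Lambda^{\op}}(\Lambda\otimes_\FF S,\Lambda)\cong\Hom_\FF(S,\FF)\otimes_\FF\Lambda$ (using finite generation of $S$ in each degree). The key step is to apply \cref{lem:Borelastensorproduct2} with $P^*=\varepsilon^*$: since $\varepsilon^*$ is bounded above and consists of finitely generated $\Lambda$-projective right $\Lambda\ast Q$-modules and $C^*$ is bounded below, we get a natural $Q$-equivariant isomorphism
\[
\Hom_\Lambda(\varepsilon^*,C^*)\;\cong\; C^*\otimes_\Lambda\Hom_\Lambda(\varepsilon^*,\Lambda).
\]
Now $\Hom_\Lambda(\varepsilon^*,\Lambda)=\Hom_\Lambda(\Hom_{\Lambda^{\op}}(\Lambda\tilde\otimes_\FF S,\Lambda),\Lambda)$, and the biduality map $\Lambda\tilde\otimes_\FF S\to\Hom_\Lambda(\Hom_{\Lambda^{\op}}(\Lambda\tilde\otimes_\FF S,\Lambda),\Lambda)$ is an isomorphism degreewise because each term is finitely generated free over $\Lambda$ and $\Lambda$ is self-injective; one checks this identification is compatible with the $\Lambda\ast Q$-structures. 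Substituting back gives $\Hom_\Lambda(\varepsilon^*,C^*)\cong C^*\otimes_\Lambda(\Lambda\tilde\otimes_\FF S)\cong C^*\otimes_\FF S$ as bigraded objects, and the tensor differential on $C^*\otimes_\Lambda(\Lambda\tilde\otimes_\FF S)$ is $d_C\otimes 1$ plus the image of the twisted differential $c\otimes f\mapsto\sum_i cy_i\otimes x_if$, i.e., exactly $d(c\otimes f)=(dc)\otimes f+\sum_i cy_i\otimes x_if$. This is the asserted complex $C^*\tilde\otimes_\FF S$.

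Finally I would track naturality and the $Q$-action. Naturality in $C^*$ is immediate since every isomorphism used (the evaluation isomorphism of \cref{lem:Borelastensorproduct2}, biduality, and the identification of differentials) is natural. For the $Q$-equivariance, \cref{lem:Borelastensorproduct2} already supplies a $k[Q]$-linear isomorphism $\Phi\colon C^*\otimes_\Lambda\Hom_\Lambda(\varepsilon^*,\Lambda)\to\Hom_\Lambda(\varepsilon^*,C^*)$; it remains to note that the biduality identification $\Hom_\Lambda(\varepsilon^*,\Lambda)\cong\Lambda\tilde\otimes_\FF S$ is $\Lambda\ast Q$-linear, which follows because it is induced from the canonical evaluation pairing, and that the resulting $Q$-action on $C^*\otimes_\Lambda(\Lambda\tilde\otimes_\FF S)$ transported through $C^*\otimes_\FF S$ is the diagonal one $(c\otimes f)q = cq\otimes q^{-1}f$ coming from \cref{lem:tensoroverAisQmodule}. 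The main obstacle I anticipate is purely bookkeeping: keeping the left/right and $\Lambda^{\op}$ conventions straight through the two nested $\Hom$'s, and verifying that under all these identifications the twisted differential on $\Lambda\tilde\otimes_\FF S$ transports to precisely the stated cross term $\sum_i cy_i\otimes x_if$ rather than a sign- or transpose-modified version. Since everything is in characteristic two, sign issues disappear, but one must still confirm that the dual of the map "multiply by $y_i$ on $\Lambda$" is "multiply by $x_i$ on $S$" under the chosen bases, which is exactly the defining duality between $V$ and $V^*$.
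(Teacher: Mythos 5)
Your proposal is correct and follows essentially the same route as the paper: showing $\varepsilon^*$ is a projective resolution of $\FF$ by dualizing the injective resolution of \cref{lem:injectiveresolution} (using self-injectivity of $\Lambda$ together with \cref{lem:AastQstructureOnHoms} and \cref{rem:opAndtwistedProducts} to get the $\Lambda\ast Q$-structure), and then obtaining $\Hom_\Lambda(\varepsilon^*,C^*)\cong C^*\otimes_\Lambda \Hom_\Lambda(\varepsilon^*,\Lambda)\cong C^*\otimes_\Lambda(\Lambda\tilde\otimes_\FF S)\cong C^*\tilde\otimes_\FF S$ via \cref{lem:Borelastensorproduct2} and biduality. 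The paper's proof is terser but makes exactly the same moves; your expanded checks on the twisted differential and the $Q$-action are consistent with, and fill in implicit steps of, the argument in the text.
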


\begin{proof}
   The cochain complex $\varepsilon^*$ consists of finitely generated $\Lambda$-projective right $\Lambda\ast Q$-modules by \cref{lem:AastQstructureOnHoms} and \cref{rem:opAndtwistedProducts}. It is a projective resolution of \[\Hom_{\Lambda^\op}(\Hom_\Lambda(\FF,\Lambda), \Lambda)\cong \Hom_{\Lambda^\op}(\Lambda^n(V), \Lambda) \cong \FF\] by \cref{lem:injectiveresolution} as $\Lambda$ is self-injective.

    Finally, using \cref{lem:Borelastensorproduct2}, we have a natural isomorphism
\[
    \Hom_{\Lambda}(\varepsilon^*, C^*) \cong C^*\otimes _\Lambda \Hom_\Lambda(\varepsilon^{*}, \Lambda) \cong C^*\otimes_\Lambda \Lambda\tilde{\otimes}_\FF S \cong C^*\tilde{\otimes}_\FF S\]

under which the differential of $C^*\tilde{\otimes}_\FF S$ is as given in the statement.
\end{proof}

Note that $\varepsilon^*=\Hom_{\Lambda^\op}(\Lambda\tilde{\otimes}_\FF S, \Lambda)$ is a right dg module over $(S^\op\otimes_\FF \Lambda)\ast Q$ equipped with the trivial differential.

\begin{remark}\label{rem:isomorphicresolutions} Nonequivariantly, $C^*\tilde{\otimes}_\FF S$ is $\beta(C^*)$ for Carlsson's functor $\beta$ from \cite[Section~(II)]{carlsson1983}. To compare to \cite{avramovbuchweitziyengarmiller10,avramovbuchweitziyengarmiller10c}, there is an isomorphism \[\Hom_{\Lambda^\op}(\Lambda\tilde{\otimes}_\FF S, \Lambda)\cong \Hom_\FF(\Hom_\FF(\Lambda,\FF)\tilde{\otimes}_\FF S, \FF)\cong \Lambda\tilde{\otimes}_\FF \Hom_\FF(S, \FF)\]
    of dg modules over $(S^{\op}\otimes_\FF \Lambda)\ast Q$, where the differential on the right-hand side is $d(\lambda\otimes h)= \sum_i \lambda y_i\otimes h x_i$. 
\end{remark}

The functor $\Hom_\Lambda(\varepsilon^*,-)$ from cochain complexes of right $\Lambda\ast Q$-modules to right dg modules over $S\ast Q$ is right adjoint to $M\mapsto M\otimes_S \varepsilon^*$. Here $Q$ acts on $M\otimes_S \varepsilon^*$ diagonally.

The counit
$\Hom_{\Lambda}(\varepsilon^*, C^*)\otimes_S \varepsilon^*\to C^*
$ is given by evaluation $(f_i)_i\otimes \varphi\mapsto f_m(\varphi)$ for $\varphi\in \varepsilon^m$.

The unit $M\to \Hom_{\Lambda}(\varepsilon^*,M\otimes_S\varepsilon^*)$
sends $m\in M_n$ to $m\otimes_S- \colon \varepsilon^*\to M_n\otimes_S \varepsilon^*$.

\begin{lemma}\label{lem:unicounitqi}
    For $M=S$, the unit $S\to \Hom_{\Lambda}(\varepsilon^*,\varepsilon^*)$ is a quasi-isomorphism. For $C^*=\FF$ concentrated in degree zero, the counit $Hom_\Lambda(\varepsilon^*,\FF)\otimes_S \varepsilon^* \to \FF$ is a quasi-isomorphism.
\end{lemma}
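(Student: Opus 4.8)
The plan is to prove the two quasi-isomorphism claims by unwinding the identifications already established and reducing to the classical, nonequivariant facts about the Koszul-type resolution $\Lambda\tilde\otimes_\FF S$. For the first claim, I would begin by applying \cref{lem:betafunctoristensorproduct} with $C^*=\varepsilon^*$: this gives a natural isomorphism $\Hom_\Lambda(\varepsilon^*,\varepsilon^*)\cong \varepsilon^*\tilde\otimes_\FF S$, under which the unit $S\to \Hom_\Lambda(\varepsilon^*,\varepsilon^*)$ becomes the map $S\to \varepsilon^*\tilde\otimes_\FF S$ sending $f\mapsto \mathrm{id}_{\varepsilon}\otimes f$ (where I use that $\varepsilon^0$ contains the canonical generator dual to $\mathrm{id}_{\Lambda}$). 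Alternatively, and perhaps more cleanly, I would use the description of $\varepsilon^*$ from \cref{rem:isomorphicresolutions} as $\Lambda\tilde\otimes_\FF\Hom_\FF(S,\FF)$ with differential $d(\lambda\otimes h)=\sum_i \lambda y_i\otimes hx_i$, so that $\Hom_\Lambda(\varepsilon^*,\varepsilon^*)$ becomes a double complex whose underlying bigraded object is $\Lambda\tilde\otimes_\FF\Hom_\FF(S,\FF)\tilde\otimes_\FF S$, and I want to show the inclusion of $S=\FF\otimes\FF\otimes S$ is a quasi-isomorphism. Since $\varepsilon^*$ is a bounded-below complex of $\Lambda$-projectives resolving $\FF$, and $\FF$ is quasi-isomorphic to $\varepsilon^*$, the complex $\Hom_\Lambda(\varepsilon^*,\varepsilon^*)$ computes $\mathrm{RHom}_\Lambda(\FF,\FF)$; and via \cref{lem:betafunctoristensorproduct} this is $\FF\tilde\otimes_\FF S\cong S$ with trivial internal differential, which is exactly the target. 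The point is then just to check that the unit map realizes this identification on homology, i.e. that it is the canonical map $S\xrightarrow{\sim} H^*(\Hom_\Lambda(\varepsilon^*,\FF))\tilde\otimes\cdots$; this is the nonequivariant content of \cite{carlsson1983} (the complex $\Lambda\tilde\otimes_\FF S$ having homology concentrated in degree zero, applied one more time), and the equivariant structure comes along for free because every map in sight is $Q$-equivariant by the earlier lemmas.

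For the second claim, I would apply \cref{lem:betafunctoristensorproduct} with $C^*=\FF$ concentrated in degree zero, giving $\Hom_\Lambda(\varepsilon^*,\FF)\cong \FF\tilde\otimes_\FF S = S$ with the twisted differential, which collapses since $\FF y_i=0$: so $\Hom_\Lambda(\varepsilon^*,\FF)\cong S$ with trivial differential, as a right dg module over $S\ast Q$. Then the counit $\Hom_\Lambda(\varepsilon^*,\FF)\otimes_S\varepsilon^*\to\FF$ becomes $S\otimes_S\varepsilon^*\cong\varepsilon^*\to\FF$, and under the identification it is precisely the augmentation $\varepsilon^*\to\FF$ exhibiting $\varepsilon^*$ as a resolution of $\FF$ (by \cref{lem:betafunctoristensorproduct}, or directly by \cref{lem:injectiveresolution} dualized). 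That map is a quasi-isomorphism by construction of $\varepsilon^*$. The only thing to verify is that the evaluation formula for the counit, $(f_i)_i\otimes\varphi\mapsto f_m(\varphi)$, matches the augmentation after the identification $\Hom_\Lambda(\varepsilon^*,\FF)\cong S$ and $S\otimes_S\varepsilon^*\cong\varepsilon^*$ — a direct diagram chase using that the isomorphism in \cref{lem:Borelastensorproduct2} is $c\otimes f\mapsto c\cdot f(\_)$.

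The main obstacle I anticipate is bookkeeping rather than conceptual: making sure the chain of natural isomorphisms in \cref{lem:betafunctoristensorproduct} (which goes $\Hom_\Lambda(\varepsilon^*,C^*)\cong C^*\otimes_\Lambda\Hom_\Lambda(\varepsilon^*,\Lambda)\cong C^*\otimes_\Lambda(\Lambda\tilde\otimes_\FF S)\cong C^*\tilde\otimes_\FF S$) is compatible with the explicit unit and counit formulas, and in particular that the unit $S\to\Hom_\Lambda(\varepsilon^*,\varepsilon^*)$ is sent to the obvious inclusion $S=\FF\tilde\otimes_\FF S\hookrightarrow\varepsilon^*\tilde\otimes_\FF S$ under $C^*=\varepsilon^*$. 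Once that is pinned down, both quasi-isomorphism statements follow from the single nonequivariant input that $\Lambda\tilde\otimes_\FF S$ has homology concentrated in degree $0$ (equivalently, that $\varepsilon^*$ resolves $\FF$), which is \cref{lem:injectiveresolution}; the $Q$-equivariance is automatic since all the constructions were shown to be $Q$-equivariant and $Q$-action does not affect whether a map is a quasi-isomorphism of underlying complexes.
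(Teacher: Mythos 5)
Your proposal is correct and draws on the same key inputs as the paper: the identification of \cref{lem:betafunctoristensorproduct}, the description of $\varepsilon^*$ in \cref{rem:isomorphicresolutions}, and the nonequivariant fact that $\varepsilon^*$ resolves $\FF$. Your treatment of the second claim (the counit) is essentially identical to the paper's. For the first claim, the paper handles the ``bookkeeping'' you flag by a slightly cleaner device: rather than tracking the unit through the identification $\Hom_\Lambda(\varepsilon^*,\varepsilon^*)\cong\varepsilon^*\tilde\otimes_\FF S$ (or through a double complex), it postcomposes the unit with the quasi-isomorphism $\Hom_\Lambda(\varepsilon^*,\varepsilon^*)\simeq\Hom_\Lambda(\varepsilon^*,\FF)$ coming from the resolution $\varepsilon^*\to\FF$ (which $\Hom_\Lambda(\varepsilon^*,-)$ preserves since $\varepsilon^*$ is a complex of $\Lambda$-projectives), then identifies $\Hom_\Lambda(\varepsilon^*,\FF)\cong\Hom_\Lambda(\Lambda\tilde\otimes_\FF\Hom_\FF(S,\FF),\FF)\cong S$ with trivial differential using \cref{rem:isomorphicresolutions}, and observes that the resulting composite $S\to S$ is the identity. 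This sidesteps having to compute the unit's image under the tensor identification explicitly, which is precisely the point you anticipated would require care. If you carry out your version, be a little careful with the claim that the unit ``sends $f\mapsto\mathrm{id}_\varepsilon\otimes f$'': the unit sends $s\in S$ to multiplication by $s$ on $\varepsilon^*$, and identifying this element under $\Phi^{-1}$ of \cref{lem:Borelastensorproduct2} is exactly the step the paper's postcomposition trick avoids.
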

\begin{proof} We show that the unit is a quasi-isomorphism in $M=S$. Using \cref{rem:isomorphicresolutions}, consider the composite
 \[S\to \Hom_{\Lambda}(\varepsilon^*,\varepsilon^*)\simeq \Hom_{\Lambda}(\varepsilon^*,\FF) \cong \Hom_{\Lambda}(\Lambda\tilde \otimes_\FF \Hom_\FF(S, \FF),\FF).\] The target has trivial differential and is isomorphic to $\Hom_\FF(\Hom_\FF(S,\FF),\FF)\cong S$. It follows that the unit is a quasi-isomorphism in $M=S$ since the whole composite $S\to S$ is the identity.

  For $C^*=\FF$, the counit factors as an isomorphism followed by the resolution $\varepsilon^*\to \FF$:
 \[ \Hom_\Lambda(\varepsilon^*,\FF)\otimes_S \varepsilon^*\cong S\otimes_S \varepsilon^* \to \FF\]
 Hence, the counit is a quasi-isomorphism in $\FF$.
\end{proof}

For a differential graded algebra $R$, we write $K(R)$ for the homotopy category of differential graded right modules over $R$ and $D(R)$ for the corresponding derived category obtained by localization with respect to the quasi-isomorphisms. We equip $K(R)$ and $D(R)$ with the usual triangulated structures. For an object $X$ of $D(R)$, we write $\thick(X)$ (or $\thick_R(X)$) for the thick subcategory of $D(R)$ generated by $X$, i.e., the intersection of all, full triangulated subcategories of $D(R)$ that contain $X$ and are closed under taking summands.  We use cohomological grading. In particular, the derived category of cochain complexes over $\Lambda\ast Q$ is $D(\Lambda\ast Q)$. 

The adjoint functors $-\otimes_S \varepsilon^*$ and $\Hom_{\Lambda}(\varepsilon^*,-)$ induce exact, adjoint functors on the triangulated homotopy categories
\[K(S\ast Q)\rightleftarrows K(\Lambda\ast Q).\]
The functor $\Hom_{\Lambda}(\varepsilon^*,-)$ preserves quasi-isomorphisms and thus has a right derived functor $R\Hom_{\Lambda}(\varepsilon^*,-)$ with $R\Hom_{\Lambda}(\varepsilon^*,C^*)=\Hom_{\Lambda}(\varepsilon^*,C^*)$ for $C^*\in D(\Lambda\ast Q)$. The left derived functor of $-\otimes_S \varepsilon^*$ exists as well and can be computed in a dg module $M$ over $S\ast Q$ by applying $-\otimes_S \varepsilon^*$ to a semifree resolution of $M$. If the underlying graded $S$-module of $M$ is finitely generated and free, then $M$ is semifree over $S$, and $M\otimes^L_S \varepsilon^*=M\otimes_S\varepsilon^*$.

The functor $-\otimes_S^L \varepsilon^*\colon D(S\ast Q)\to D(\Lambda\ast Q)$ is exact and has exact right adjoint $R\Hom_{\Lambda}(\varepsilon^*,-)$.

We write $D_{S\text{-}\perf}(S\ast Q)$ for the full triangulated subcategory of $D(S\ast Q)$ of objects isomorphic to dg $S\ast Q$-modules such that the underlying graded $S$-module is free and finitely generated. Moreover, we denote the full triangulated subcategory of $M\in  D_{S\text{-}\perf}(S\ast Q)$ with $\dim_\FF H^*(M)<\infty$ by $D^{hf}_{S\text{-}\perf}(S\ast Q)$. We identify the bounded derived category $D^b(\modcat_{\Lambda\ast Q})$ of finitely generated right $\Lambda\ast Q$-modules with the full subcategory of $D(\Lambda\ast Q)$ consisting of the objects isomorphic to bounded cochain complexes of finitely generated modules. Equivalently, this is the full subcategory of cochain complexes in $D(\Lambda\ast Q)$ with finite-dimensional total homology; see e.g. \cite[Example~4.2.18]{krause22}.

We write $D_{\Lambda\text{-}\perf}(\Lambda\ast Q)$ for the full triangulated subcategory of $D(\Lambda\ast Q)$ of objects isomorphic to bounded cochain complexes whose underlying $\Lambda$-modules are finitely generated and projective. By \cite[Lemma~3.3]{lau2023}, this category agrees with the full subcategory of objects in $D(\Lambda\ast Q)$ that are perfect in $D(\Lambda)$.

\begin{theorem}\label{thm:equivariantBGG}
    The adjunction $(-\otimes_S^L \varepsilon^*, R\Hom_{\Lambda}(\varepsilon^*,-))$ restricts to equivalences of triangulated categories

    \[D_{S\text{-}\perf}(S\ast Q) \rightleftarrows D^b(\modcat_{\Lambda\ast Q})\]
    and
     \[D^{hf}_{S\text{-}\perf}(S\ast Q)\rightleftarrows D_{\Lambda\text{-}\perf}(\Lambda\ast Q).\]
\end{theorem}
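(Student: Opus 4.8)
The plan is to build on the non-equivariant BGG correspondence of Carlsson together with the adjunction machinery already set up, reducing the equivariant statement to a combination of (i) a purely non-equivariant input and (ii) a thick-subcategory-generation argument. First I would observe that, by the counit and unit computations in \cref{lem:unicounitqi}, the derived counit $R\Hom_{\Lambda}(\varepsilon^*,-)\otimes_S^L\varepsilon^* \to \id$ is an isomorphism on $\FF$ (viewed in $D(\Lambda\ast Q)$) and the derived unit $\id\to R\Hom_{\Lambda}(\varepsilon^*,-\otimes_S^L\varepsilon^*)$ is an isomorphism on $S$ (viewed in $D(S\ast Q)$). The strategy is then to promote these two isomorphisms-on-generators to isomorphisms on the whole subcategories by a devissage: the full subcategories of objects on which the (co)unit is an isomorphism are thick subcategories of $D(\Lambda\ast Q)$ and $D(S\ast Q)$ respectively, since $-\otimes_S^L\varepsilon^*$ and $R\Hom_\Lambda(\varepsilon^*,-)$ are exact and preserve coproducts/summands.

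Next I would identify the thick subcategories generated by the two test objects. On the $\Lambda\ast Q$-side, I claim $\thick_{\Lambda\ast Q}(\FF) = D^b(\modcat_{\Lambda\ast Q})$: indeed $\FF$ generates $D^b(\modcat_\Lambda)$ as a thick subcategory (since $\Lambda$ is a local finite-dimensional algebra, every finitely generated module has a finite filtration with subquotients $\FF$), and one bootstraps to the equivariant category either by a transfer/averaging argument when $|Q|$ is invertible, or more robustly by noting that $\Lambda\ast Q$ is again a finite-dimensional algebra whose simple modules are summands of $\FF\otimes_\FF \FF[Q] \cong \FF\uparrow_\Lambda^{\Lambda\ast Q}$ restricted appropriately — so all simples, hence all finitely generated modules, lie in $\thick(\FF)$. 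Symmetrically, on the $S\ast Q$-side I would check that $S$ generates $D_{S\text{-}\perf}(S\ast Q)$ as a thick subcategory: any object there is, up to isomorphism, a finite dg module that is finitely generated free over $S$, hence built from finitely many shifts of the free rank-one module, which is a summand of $S\otimes_\FF\FF[Q]=S\uparrow$; a standard induction on the number of generators, filtering by the $S$-module structure, places it in $\thick(S)$. Combining: on $D^b(\modcat_{\Lambda\ast Q})\subseteq\thick(\FF)$ the counit is invertible, and on $D_{S\text{-}\perf}(S\ast Q)\subseteq\thick(S)$ the unit is invertible.

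It then remains to verify the two functors actually land in the claimed subcategories and are mutually inverse there. For $M\in D_{S\text{-}\perf}(S\ast Q)$ with finitely generated free underlying $S$-module, $M\otimes_S^L\varepsilon^* = M\otimes_S\varepsilon^*$ is, after forgetting $Q$, exactly Carlsson's $\beta^{-1}$ applied to $M$, so it is a perfect complex of $\Lambda$-modules; being a complex of $\Lambda\ast Q$-modules, \cite[Lemma~3.3]{lau2023} puts it in $D_{\Lambda\text{-}\perf}(\Lambda\ast Q)$, and when $\dim_\FF H^*(M)<\infty$ one instead lands in $D^b(\modcat_{\Lambda\ast Q})$. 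Conversely $R\Hom_\Lambda(\varepsilon^*,C^*) = \Hom_\Lambda(\varepsilon^*,C^*)\cong C^*\tilde\otimes_\FF S$ by \cref{lem:betafunctoristensorproduct}, which is finitely generated free over $S$ whenever $C^*$ is bounded with finitely generated projective $\Lambda$-modules, giving an object of $D_{S\text{-}\perf}(S\ast Q)$; finite total $\Lambda$-homology corresponds to finite $S$-homology. Since unit and counit are isomorphisms on the relevant thick subcategories, the adjunction restricts to the two asserted equivalences, and the $hf$ conditions match up because $\dim_\FF H^*$ is preserved by both functors (they agree with Carlsson's equivalence after forgetting $Q$). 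The main obstacle I anticipate is the generation statements: specifically, confirming that $\thick_{\Lambda\ast Q}(\FF)$ is all of $D^b(\modcat_{\Lambda\ast Q})$ without assuming $|Q|$ invertible — this needs the observation that $\FF\uparrow_\Lambda^{\Lambda\ast Q}$ surjects onto every simple $\Lambda\ast Q$-module (equivalently, that $\Lambda\ast Q$ has a unique block-free radical layer structure over the augmentation), which is where I would spend the most care; everything else is formal adjunction bookkeeping plus citing Carlsson.
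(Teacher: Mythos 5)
There is a genuine gap in the devissage step. You claim that $\thick_{\Lambda\ast Q}(\FF) = D^b(\modcat_{\Lambda\ast Q})$ and that $\thick_{S\ast Q}(S) = D_{S\text{-}\perf}(S\ast Q)$, but both of these fail whenever $\FF[Q]$ has more than one simple module. For a concrete obstruction on the $S$-side: take a nontrivial one-dimensional $Q$-representation $L$ and consider $L\otimes_\FF S$ in $D_{S\text{-}\perf}(S\ast Q)$. The exact functor $-\otimes_S^L\FF$ sends $S\mapsto\FF$ and $L\otimes S\mapsto L$, so $\thick_{S\ast Q}(S)$ lands inside the preimage of $\thick_{\FF[Q]}(\FF)$, which does not contain $L$; hence $L\otimes S\notin\thick_{S\ast Q}(S)$. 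The same issue appears on the $\Lambda$-side (consider $\Lambda=\FF$ to see it immediately: then $\thick_{\FF[Q]}(\FF)$ is just the triangulated closure of the trivial module and misses the other simples). Your proposed fix — that the simples are quotients or summands of $\FF\!\uparrow_\Lambda^{\Lambda\ast Q}\cong\FF[Q]$ — does not help: thick subcategories are closed under retracts, shifts and cones, not under arbitrary quotients, and $\FF[Q]$ itself is not in $\thick_{\Lambda\ast Q}(\FF)$, so you cannot extract the other simples this way.

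The paper sidesteps this entirely by running the thick-subcategory argument \emph{after} forgetting the $Q$-action. Whether the derived counit on $C^*\in D^b(\modcat_{\Lambda\ast Q})$ is a quasi-isomorphism can be tested after restriction to $D(\Lambda)$, where it coincides with the non-equivariant counit for trivial $Q$; there one does have $\thick_\Lambda(\FF)=D^b(\modcat_\Lambda)$, so \cref{lem:unicounitqi} together with the non-equivariant devissage gives invertibility on all of $D^b(\modcat_{\Lambda\ast Q})$. The unit is handled symmetrically via $\thick_S(S)$ in $D(S)$, and the statement that the functors land in the claimed subcategories is again checked after forgetting $Q$. So the global structure of your argument (compute unit/counit on generators, devissage, verify targets) matches the paper, but the crucial move of descending to the non-equivariant derived category before invoking thick-subcategory generation is what makes the argument actually work; the equivariant generation statements you rely on are false in the generality the theorem requires (arbitrary finite $Q$).
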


\begin{proof}

For any bounded complex $C^*$ of finitely generated right modules over $\Lambda\ast Q$, the dg module $\Hom_{\Lambda}(\varepsilon^*, C^*)$ is $S$-free and finitely generated by \cref{lem:betafunctoristensorproduct}. Thus $R\Hom_{\Lambda}(\varepsilon^*, -)$ restricts to a functor $D^b(\modcat_{\Lambda\ast Q})\to  D_{S\text{-}\perf}(S\ast Q)$. Moreover, the counit of the derived adjunction
$\Hom_{\Lambda}(\varepsilon^*, C^*)\otimes^L_S \varepsilon^*\to C^*$
can be computed via the ordinary counit. We show that it is a quasi-isomorphism. Forgetting the $Q$-action, this counit agrees with the counit for the adjunction with $Q$ the trivial group. This counit is a quasi-isomorphism since it is a quasi-isomorphism for $C^*=\FF$ by \cref{lem:unicounitqi} and since $\thick(\FF)\subset D(\Lambda)$ is the bounded derived category of finitely generated $\Lambda$-modules. Hence, the counit between the derived adjunction is an isomorphism for $C^*\in D^b(\modcat_{\Lambda\ast Q})$.

If a dg module $M\in D(S\ast Q)$ is $S$-free and finitely generated, then the unit
$M\to \Hom_{\Lambda}(\varepsilon^*,M\otimes^L_S\varepsilon^*)$
can be computed by the ordinary unit. Forgetting the $Q$-action, this unit agrees with the unit for the adjunction with $Q$ the trivial group. It is a quasi-isomorphism since it is a quasi-isomorphism for $M=S$ by \cref{lem:unicounitqi} and $\thick(S)\subset D(S)$ contains all dg modules that are free and finitely generated. Hence, the unit between the derived adjunction is an isomorphism for $M\in D_{S\text{-}\perf}(S\ast Q)$.

To establish the first equivalence, we are left to show that $-\otimes_S^L \varepsilon^*$ restricts to a functor $D_{S\text{-}\perf}(S\ast Q)\to D^b(\modcat_{\Lambda\ast Q})$. Forgetting the $Q$-action, this reduces to the case $M=S$ for which $S\otimes_S^L \varepsilon^*\cong \FF$. Hence $M\otimes_S^L \varepsilon^*$ belongs to $D^b(\modcat_{\Lambda\ast Q})$. 

To establish the second equivalence, we check that the adjoint functors restrict further. If $C^*$ is a bounded complex of finitely generated modules over $\Lambda\ast Q$ that are $\Lambda$-projective and thus injective over $\Lambda$, then $H^*(\Hom_{\Lambda}(\varepsilon^*, C^*))\cong H^*(\Hom_\Lambda(\FF,C^*))$ has finite total dimension. 

On the other hand, if $M$ over $S\ast Q$ is $S$-free and finitely generated with finite-dimensional total homology, then we will show that $M\otimes_S\varepsilon^*$ is perfect over $\Lambda$. This holds since for trivial $Q$ the equivalence restricts to an equivalence $\thick_\Lambda(\Lambda)\simeq \thick_{S}(R\Hom_{\Lambda}(\varepsilon^*,\Lambda))=\thick_{S}(\FF)$,
and $\thick_{S}(\FF)=D^{hf}_{S\text{-}\perf}(S)$ by \cite[Theorem~6.4]{avramovbuchweitziyengarmiller10}. 
We have shown that the first equivalence restricts as claimed, providing the second equivalence.
\end{proof}

If $Q$ is of odd order, then a right $\Lambda\ast Q$-module is projective if and only if it is projective over $\Lambda$. Thus we obtain the following consequence.
\begin{corollary}\label{cor:equivalenceforQodd}
    If $Q$ is of odd order, then the adjunction $(-\otimes_S^L \varepsilon^*, R\Hom_{\Lambda}(\varepsilon^*,-))$ restricts to equivalences of triangulated categories
     \[D^{hf}_{S\text{-}\perf}(S\ast Q)\rightleftarrows D_{\perf}(\Lambda\ast Q),\]
     where the latter category is the perfect derived category of right $\Lambda\ast Q$-modules.
\end{corollary}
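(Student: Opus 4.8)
The plan is to deduce the corollary directly from the second equivalence of \cref{thm:equivariantBGG}, so that the only point to check is the identification of full triangulated subcategories
\[
D_{\Lambda\text{-}\perf}(\Lambda\ast Q) = D_{\perf}(\Lambda\ast Q)
\]
inside $D(\Lambda\ast Q)$. Here $D_{\perf}(\Lambda\ast Q)$ denotes the perfect derived category, i.e.\ the thick subcategory generated by the free module $\Lambda\ast Q$; since $\Lambda\ast Q$ is a finite-dimensional $\FF$-algebra, this is the full subcategory of $D(\Lambda\ast Q)$ of objects isomorphic to bounded cochain complexes of finitely generated projective right $\Lambda\ast Q$-modules.

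First I would recall, as already used in the proof of \cref{lem:injectiveresolution}, that because $Q$ has odd order its order is invertible in the characteristic-two field $\FF$, so by \cite[Theorem~1.1 and~1.3]{reitenriedtmann1985} a right $\Lambda\ast Q$-module is projective over $\Lambda\ast Q$ if and only if it is projective over $\Lambda$. Next I would record the elementary observation that a module is finitely generated over $\Lambda$ if and only if it is finitely generated over $\Lambda\ast Q$: generators over $\Lambda$ also generate over the larger ring $\Lambda\ast Q$, and conversely $\Lambda\ast Q$ is a finitely generated free $\Lambda$-module, so finite generation descends along the inclusion $\Lambda\hookrightarrow \Lambda\ast Q$. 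Combining these two facts, the class of finitely generated $\Lambda$-projective right $\Lambda\ast Q$-modules coincides with the class of finitely generated projective right $\Lambda\ast Q$-modules.

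Consequently, the bounded cochain complexes of right $\Lambda\ast Q$-modules whose underlying $\Lambda$-modules are finitely generated and projective are exactly the bounded cochain complexes of finitely generated projective $\Lambda\ast Q$-modules; passing to the full subcategories of $D(\Lambda\ast Q)$ of objects isomorphic to such complexes gives the desired identity $D_{\Lambda\text{-}\perf}(\Lambda\ast Q) = D_{\perf}(\Lambda\ast Q)$. Substituting this into the second equivalence of \cref{thm:equivariantBGG} yields the asserted equivalence $D^{hf}_{S\text{-}\perf}(S\ast Q)\rightleftarrows D_{\perf}(\Lambda\ast Q)$. There is no real obstacle here: the corollary is a formal consequence of \cref{thm:equivariantBGG}, the only nonformal ingredient being the projectivity criterion for the skew group algebra of an odd-order group, which is precisely the input already cited for the injective-resolution statement.
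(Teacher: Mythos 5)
Your proposal is correct and follows essentially the same route as the paper: the paper's brief argument (the sentence preceding the corollary) also reduces the statement to the second equivalence of \cref{thm:equivariantBGG} together with the observation, from \cite[Theorems~1.1 and~1.3]{reitenriedtmann1985}, that for $Q$ of odd order a right $\Lambda\ast Q$-module is projective over $\Lambda\ast Q$ if and only if it is projective over $\Lambda$. Your treatment simply spells out, in addition, the equivalence of finite generation over $\Lambda$ and over $\Lambda\ast Q$, which the paper leaves implicit.
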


\begin{remark} 
Carlsson used homological grading in \cite{carlsson1986}. Apart from that, our functor $R\Hom_{\Lambda}(\varepsilon^*,-)\colon D_{\Lambda-\perf}(\Lambda)\to D^{hf}_{S\text{-}\perf}(S)$ for $Q$ the trivial group agrees with the functor $H$ from \cite[Theorem~II.7]{carlsson1986}. This follows from \cref{lem:betafunctoristensorproduct}. For a free, finitely generated dg $S$-module $M$ with $\dim_\FF H^*(M)<\infty$, we have $M\otimes^L_S \varepsilon^* \cong M\otimes_S (\Lambda\tilde{\otimes}_\FF \Hom_\FF(S,\FF))$ instead of $G(M)=M\tilde{\otimes}_\FF \Lambda$ from the proof of \cite[Theorem~II.7]{carlsson1986}. For a perfect $\Lambda$-cochain complex $C^*$, the composite
\[G(H(C^*))= (C^*\tilde{\otimes}_\FF S)\tilde{\otimes}_\FF \Lambda\to C^*\otimes_\FF \Lambda \to C^*
\] induced by the augmentation $S\to \FF$ and the structure map for $C^*$ over $\Lambda$ is not a chain homotopy equivalence. Indeed, applying the functor $-\otimes_\Lambda \FF$ to this map and precomposing with the quasi-isomorphism $\Hom_\Lambda(\FF,C^*)\simeq C^*\tilde{\otimes}_\FF S$ yields the zero map $\Hom_\Lambda(\FF,C^*)\to C^*\tilde{\otimes}_\Lambda \FF$.
\end{remark}

\begin{remark}\label{rem:identHBPandA} For $C^*=\FF$ concentrated in degree
zero, the isomorphism \[
\Ext_{\Lambda}^*(\FF,\FF)\cong H^*(\Hom_\Lambda(\varepsilon^*,\FF))\cong S
\]
is a graded ring isomorphism, which is compatible with the $Q$-actions. Moreover, for an arbitrary cochain complex $C^*$ over $\Lambda\ast Q$, the $\Ext^*_{\Lambda}(\FF,\FF)$-action on $\Ext^*_{\Lambda}(\FF,C^*)$ from \cref{lem:equivariantactionOfExt} agrees with the $S$-action on $H^*(\Hom_\Lambda(\varepsilon^*, C^*))$; see \cite[Section~1.3]{alldaypuppe1993}.
\end{remark}

\section{Perfect complexes with small homology for extensions by 
\texorpdfstring{$(\ZZ/2)^2$}{(Z/2)²}}

\label{sec:classification_perfcomplexes}

In this section we restrict the equivalence $R\Hom_\Lambda(\varepsilon^*,-)$ from \cref{cor:equivalenceforQodd} to perfect cochain complexes over $\Lambda \ast Q$ with four-dimensional total homology for $\Lambda=\Lambda(y_1,y_2)$ and $S=\FF[x_1,x_2]$. In particular, we assume that $Q$ is of odd order. We will provide an explicit classification of these perfect complexes with small homology in \cref{thm:classification_four_dim_perfect}.

\begin{lemma}\label{lem:bijectionfourdim}
The functor $\Hom_{\Lambda}(\varepsilon^*,-)$ induces a bijection between isomorphism classes of perfect dg modules $C$ over $\Lambda\ast Q$ with four-dimensional homology in degrees $0\leq m\leq n \leq l$ and isomorphism classes of objects $M$ of $D^{hf}_{S\text{-}\perf}(S\ast Q)$ such that $M\otimes_S \FF$ has four-dimensional homology in the same degrees. Moreover, if $J\subset S$ is the annihilator ideal of $H^*(\Hom_{\Lambda}(\varepsilon^*,C))$, then $J$ is a $Q$-invariant parameter ideal, $H^*(\Hom_{\Lambda}(\varepsilon^*,C))\cong H^0(\Hom_{\Lambda}(\varepsilon^*,C))\otimes_\FF S/J$ as right $S\ast Q$-modules, and $H^0(\Hom_{\Lambda}(\varepsilon^*,C))$ is one-dimensional over $\FF$.
\end{lemma}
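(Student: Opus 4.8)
The plan is to combine \cref{thm:equivariantBGG} (or rather \cref{cor:equivalenceforQodd} since $Q$ has odd order) with the structural information about $\Ext^*_{\FF[P]}(\FF, C)$ obtained in \cref{lem:parameteridealFromPerfectComplex}, translated through the identification of $\FF[P]$ with an exterior algebra $\Lambda = \Lambda(y_1,y_2)$ on a two-dimensional $Q$-representation $V$ and the identification $\Ext^*_\Lambda(\FF,\FF)\cong S=\FF[x_1,x_2]$ of \cref{rem:identHBPandA}. First I would recall that under the equivalence of \cref{cor:equivalenceforQodd}, $C$ is sent to $M = R\Hom_\Lambda(\varepsilon^*, C) = \Hom_\Lambda(\varepsilon^*,C)$, which by \cref{rem:identHBPandA} has $H^*(M)\cong \Ext^*_\Lambda(\FF,C)$ as a graded $S\ast Q$-module. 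The bijection between isomorphism classes is then immediate from the equivalence of categories once we match up the two subcategories: a perfect dg module $C$ over $\Lambda\ast Q$ with four-dimensional homology corresponds to $M\in D^{hf}_{S\text{-}\perf}(S\ast Q)$, and I must show that the condition ``$C$ has four-dimensional homology in degrees $0\le m\le n\le l$'' translates to the condition ``$M\otimes_S\FF$ has four-dimensional homology in degrees $0\le m\le n\le l$''. For this I would observe that $M\otimes^L_S\FF \cong C$ under the inverse equivalence restricted to $\FF = S\otimes^L_S\varepsilon^*$-type computations; more precisely, the counit $\Hom_\Lambda(\varepsilon^*,C)\otimes^L_S\varepsilon^*\to C$ is an isomorphism by \cref{thm:equivariantBGG}, and tensoring $\varepsilon^*$ with $\FF$ over $\Lambda$ and using $\varepsilon^*\otimes_\Lambda\FF \simeq S$ (from \cref{lem:betafunctoristensorproduct} with $C^*=\FF$, i.e. $\Hom_\Lambda(\varepsilon^*,\FF)\cong S$ plus the counit computation in \cref{lem:unicounitqi}) identifies $M\otimes^L_S\FF$ with $C\otimes^L_\Lambda\FF = \Hom_\Lambda(\FF,C)$ up to the relevant (co)homological shift, whose homology is $\Ext^*_\Lambda(\FF,C)$ — wait, that is $H^*(M)$ itself, not $M\otimes_S\FF$. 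So the correct statement to prove is that $H^*(M\otimes_S\FF)\cong H^*(C)$; since $M$ is $S$-free this is $M\otimes_S\FF$ computed naively, and the identification $M\otimes_S^L\varepsilon^*\cong C$ together with $\varepsilon^*\otimes_\Lambda\FF\simeq S$ gives $C\otimes^L_\Lambda\FF\cong (M\otimes_S^L\varepsilon^*)\otimes_\Lambda\FF \cong M\otimes_S^L(\varepsilon^*\otimes_\Lambda\FF)\cong M\otimes^L_S S = M$, which again gives $H^*(C\otimes_\Lambda\FF)\cong H^*(M)$, not $M\otimes_S\FF$.

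Let me re-plan: the right comparison is via the \emph{other} augmentation. Note $M = C\tilde\otimes_\FF S$ by \cref{lem:betafunctoristensorproduct}, so $M\otimes_S\FF = (C\tilde\otimes_\FF S)\otimes_S\FF \cong C$ as a complex of $\FF$-vector spaces (the twisted differential $d(c\otimes f)=(dc)\otimes f + \sum_i cy_i\otimes x_i f$ reduces modulo the ideal $(x_1,x_2)$ to just $dc\otimes 1$). Hence $H^*(M\otimes_S\FF)\cong H^*(C)$, which is four-dimensional in degrees $0\le m\le n\le l$ precisely when $C$ is. This makes the bijection assertion precise, and the $Q$-equivariance is automatic since the equivalence is $Q$-equivariant throughout. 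Then the bijection between isomorphism classes follows formally from the fact that equivalences of categories preserve isomorphism classes and from this matching of the two full subcategories.

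Next I would establish the ``moreover'' part. Let $J\subset S$ be the annihilator of $H^*(M)\cong \Ext^*_\Lambda(\FF,C)$. Using the identification $\FF[P]\cong\Lambda$ and $H^*(BP)\cong S$ from \cref{rem:identHBPandA} (compatible with the $Q$-action), I apply \cref{lem:parameteridealFromPerfectComplex}: part \eqref{lem:parameteridealFromPerfectComplexi} gives the $Q$-equivariant isomorphism $\Ext^*_\Lambda(\FF,C)\cong \Ext^0_\Lambda(\FF,C)\otimes_\FF(S/J)$, which is exactly $H^*(M)\cong H^0(M)\otimes_\FF S/J$ as right $S\ast Q$-modules; part \eqref{lem:parameteridealFromPerfectComplexii} together with the finite-dimensionality of $S/J$ (which holds since $H^*(M)$ has finite total dimension, $M$ being an object of $D^{hf}_{S\text{-}\perf}$) shows $J$ is a parameter ideal; and $\Ext^0_\Lambda(\FF,C) = H^0(M)$ is one-dimensional by \cref{rem:whereAreTheSurvivingClasses} (which identifies $E_1^{0,0}(C^*)=E_\infty^{0,0}$ as one-dimensional). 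The $Q$-invariance of $J$ is automatic: $J$ is the annihilator of a $Q$-equivariant $S\ast Q$-module, so it is a $Q$-submodule of $S$. One subtlety I should double-check is that \cref{lem:parameteridealFromPerfectComplex} is stated for a cochain complex $C^*$ over $\FF[G]$ whose restriction to $\FF[P]$ is perfect, whereas here I only have a perfect complex over $\Lambda\ast Q$; but $\Lambda\ast Q\cong \FF[P]\ast Q\cong \FF[P\rtimes Q]$ (\cref{ex:sesisCrossedProduct}), so this is exactly the setting of Sections~\ref{sec:spectralsequence_extensions}--\ref{sec:spectralsequence_elementaryab} with $G = P\rtimes Q$, and a perfect $\Lambda\ast Q$-complex restricts to a perfect $\Lambda = \FF[P]$-complex, so the hypotheses are met.

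The main obstacle, I expect, is not any single hard step but rather carefully threading together the several identifications — the Koszul-dual exterior/symmetric algebra picture, the group-algebra-as-exterior-algebra identification, the $Q$-equivariance of all the maps, and the bookkeeping of which augmentation ($-\otimes_\Lambda\FF$ versus $-\otimes_S\FF$) computes $H^*(C)$ versus $H^*(M)$ — so that ``four-dimensional homology in degrees $0\le m\le n\le l$'' genuinely corresponds on both sides. Once \cref{lem:betafunctoristensorproduct} is invoked to get $M\cong C\tilde\otimes_\FF S$ explicitly, the reduction $M\otimes_S\FF\cong C$ of complexes is transparent and the rest is assembling \cref{cor:equivalenceforQodd}, \cref{lem:parameteridealFromPerfectComplex}, and \cref{rem:identHBPandA}.
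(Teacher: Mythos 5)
Your proposal is correct and, after the initial false start (which you explicitly identify and repair), arrives at exactly the paper's argument: the bijection follows from \cref{thm:equivariantBGG} together with the observation that $\Hom_\Lambda(\varepsilon^*,C)\otimes_S\FF \cong C$ (via \cref{lem:betafunctoristensorproduct} and the fact that the twisted differential reduces modulo $(x_1,x_2)$ to the original differential on $C$), and the ``moreover'' part is assembled from \cref{lem:parameteridealFromPerfectComplex}, \cref{lem:equivariantactionOfExt}, and \cref{rem:whereAreTheSurvivingClasses} in the same way the paper does.
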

\begin{proof}
The bijection holds by \cref{thm:equivariantBGG} and since $\Hom_{\Lambda}(\varepsilon^*, C)\otimes_S \FF\cong  C$ as cochain complexes.

The ideal $J$ is a parameter ideal by 
\cref{lem:parameteridealFromPerfectComplex} considering $\Lambda$ as $\FF[P]$, and it is $Q$-invariant by \cref{lem:equivariantactionOfExt}. By \cref{lem:parameteridealFromPerfectComplex}\eqref{lem:parameteridealFromPerfectComplexi}, the $S$-action \[H^*(\Hom_{\Lambda}(\varepsilon^*,C))\otimes_\FF S\to H^*(\Hom_{\Lambda}(\varepsilon^*,C))\] induces an isomorphism \[H^0(\Hom_{\Lambda}(\varepsilon^*,C))\otimes_\FF S/J\cong H^*(\Hom_\Lambda(\varepsilon^*,C))\] of graded $S$-modules. Since the $S$-action is $Q$-equivariant, so is the induced isomorphism. Finally, $H^0(\Hom_{\Lambda}(\varepsilon^*,C))$ is one-dimensional By \cref{rem:whereAreTheSurvivingClasses}, .
\end{proof}

There is a canonical ring homomorphism $\FF [Q] \to S\ast Q$ as the inclusion of the degree zero part. Then for any graded right $\FF [Q]$-module $W$ the induced module $\ind_{\FF [Q]}^{S\ast Q}(W)$ is just $W\otimes_\FF S$ with the diagonal right $Q$-action. For a graded right $S\ast Q$-module $M$, we will use repeatedly the induction-restriction adjunction
\[\Hom_{S\ast Q}(W\otimes_\FF S, M)\cong \Hom_{\FF [Q]}(W, M)
\]
to extend $\FF [Q]$-linear maps $W\to M$ to $S\ast Q$-linear maps $W\otimes_\FF S\to M$.

\begin{lemma}\label{lem:bijectionfourdimforM}
There is a bijection between isomorphism classes of objects $M$ in $D^{hf}_{S\text{-}\perf}(S\ast Q)$ such that $M\otimes_S \FF$ has four-dimensional homology in degrees $0\le m\le n\leq t$, and pairs $(L,J)$ of a one-dimensional $Q$-representation $L$ and a $Q$-invariant parameter ideal $J$ in $S$ with parameters in degrees $m+1,n+1$.
The map assigns to an object $M$ in $D^{hf}_{S\text{-}\perf}(S\ast Q)$ its zeroth homology $H^0(M)$ and the annihilator ideal of $H^*(M)$. In particular, there is no such $M$ with four-dimensional homology unless $l=m+n$.
\end{lemma}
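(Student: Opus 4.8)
The plan is to check that $M\mapsto(H^0(M),\Ann_S H^*(M))$ has the stated target and then to prove injectivity and surjectivity, the main inputs being \cref{lem:bijectionfourdim}, \cref{lem:parameteridealFromPerfectComplex} and \cref{thm:equivariantBGG}. \emph{Well-definedness.} Given such an $M$, let $C$ be the perfect $\Lambda\ast Q$-complex corresponding to $M$ under \cref{thm:equivariantBGG}, so that $M\cong\Hom_\Lambda(\varepsilon^*,C)$ and $M\otimes_S\FF\cong C$ has four-dimensional homology with basis in degrees $0\le m\le n\le t$. By \cref{lem:bijectionfourdim}, the ideal $J\coloneqq\Ann_S H^*(M)$ is a $Q$-invariant parameter ideal and $H^0(M)$ is a one-dimensional $Q$-representation. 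Applying \cref{lem:parameteridealFromPerfectComplex} to $C$, considering $\Lambda$ as $\FF[P]$: part~\eqref{lem:parameteridealFromPerfectComplex4} shows the two generators of $\Sigma^{-1}J/(x_1,x_2)J$ have degrees $m$ and $n$, so $J$ has parameters in degrees $m+1$ and $n+1$, and part~\eqref{lem:parameteridealFromPerfectComplexv} gives $t=m+n$, which is the ``in particular'' clause.

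\emph{Injectivity.} By \cref{lem:bijectionfourdim}, the graded right $S\ast Q$-module $H^*(M)$ is isomorphic to $H^0(M)\otimes_\FF S/J$ and is concentrated in cohomological degree zero. A complex with homology in a single cohomological degree is quasi-isomorphic to that homology, so $M\simeq H^*(M)$ in $D(S\ast Q)$; hence $M$ is determined, up to isomorphism in $D^{hf}_{S\text{-}\perf}(S\ast Q)$, by the graded $S\ast Q$-module $H^*(M)$, and this module is in turn recovered from the pair $(H^0(M),J)$. Two objects with the same image are therefore isomorphic.

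\emph{Surjectivity.} Given a one-dimensional $Q$-representation $L$ and a $Q$-invariant parameter ideal $J$ with parameters in degrees $m+1,n+1$, I would realize the pair $(L,J)$ by a twisted equivariant Koszul complex. Since $S$ is Cohen-Macaulay of Krull dimension two, any minimal generating set of $J$ is a regular sequence of length two; and since $|Q|$ is odd, $\FF[Q]$ is semisimple, so the surjection $J\twoheadrightarrow J/(x_1,x_2)J$ of graded $Q$-modules splits and produces a two-dimensional graded $Q$-subrepresentation $W\subset J$, with basis in degrees $m+1,n+1$, that minimally generates $J$. The Koszul complex $F\coloneqq\Lambda^\bullet(W)\otimes_\FF S$ on $W$ is a finite free resolution of $S/J$ over $S\ast Q$, so $M\coloneqq L\otimes_\FF F$ lies in $D^{hf}_{S\text{-}\perf}(S\ast Q)$, its total homology $L\otimes_\FF S/J$ being finite-dimensional since $J$ is a parameter ideal. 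Then $H^0(M)\cong L$ and $\Ann_S H^*(M)=\Ann_S(S/J)=J$, while $M\otimes_S\FF\cong L\otimes_\FF\Lambda^\bullet(W)$ carries the zero differential; tracking the internal-versus-cohomological degree shift built into the BGG correspondence of \cref{sec:equivariantbgg}, compatibly with \cref{lem:parameteridealFromPerfectComplex}, its homology is four-dimensional with basis in degrees $0,m,n,m+n$. Hence $M$ maps to $(L,J)$.

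I expect the surjectivity step to carry the weight: proving that the Koszul resolution can be chosen $Q$-equivariantly, which is precisely where the odd-order hypothesis enters at this point, and then the degree bookkeeping matching the homology of $M\otimes_S\FF$ with $0,m,n,m+n$. Well-definedness and injectivity are then short deductions from the cited results.
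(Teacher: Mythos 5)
Your well-definedness step and your surjectivity step match the paper's approach closely (the Koszul complex $K=L\otimes\Lambda(W)\otimes S$ built from a $Q$-equivariant splitting $\sigma$ of $J\to J/(x_1,x_2)J$, using semisimplicity of $\FF[Q]$, with the degree bookkeeping giving basis degrees $0,m,n,m+n$); those parts are fine.

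The gap is in your injectivity argument. You assert that $H^*(M)$ ``is concentrated in cohomological degree zero'' and conclude $M\simeq H^*(M)$ in $D(S\ast Q)$. But $M$ is a dg $S\ast Q$-module carrying a \emph{single} $\ZZ$-grading — the internal degree, with respect to which both the differential and multiplication by each $x_i$ raise the degree by one — and $H^*(M)\cong H^0(M)\otimes_\FF S/J$ is spread over degrees $0$ through $m+n$, not concentrated in one degree. There is no auxiliary homological grading on $M$ in which the differential is homogeneous and the homology sits in a single degree, so the standard ``concentrated homology $\Rightarrow$ quasi-isomorphic to homology'' argument does not apply. The formality statement $M\simeq H^*(M)$ is in fact \emph{true} in this situation, but proving it is precisely the substance of the paper's argument: one must construct an explicit quasi-isomorphism $F=(f_2,f_1,f_0)\colon K\to M$ by lifting step by step (extend $f_0$ on $\Lambda^0(W)\otimes S$ by adjunction; lift $l\otimes j\mapsto f_0(l)\cdot\sigma(j)$ to $f_1$ because it hits boundaries, using $H^*(M)\cong L\otimes S/J$; lift the compatibility condition to $f_2$ using that $S/J$ is a Poincar\'e duality algebra with top class in degree $m+n$, so $H^{m+n+1}(M)=0$, together with semisimplicity of $\FF[Q]$). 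Once that map is constructed, any two $M$, $M'$ realizing the same $(L,J)$ are both isomorphic to the same $K$, which gives injectivity (and simultaneously surjectivity). To repair your proof you would either need to carry out this lifting construction, or replace the concentration claim with a genuine intrinsic-formality criterion applicable to dg $S\ast Q$-modules whose homology is a one-dimensional twist of a complete intersection quotient $S/J$; as written the claimed shortcut does not stand.
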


\begin{proof}
If $M$ is an object of $D^{hf}_{S\text{-}\perf}(S\ast Q)$ such that $H^*(M\otimes_S \FF)$ is four-dimensional, then by \cref{lem:bijectionfourdim}, the annihilator ideal $J$ in $S$ of $H^*(M)$ is a $Q$-invariant parameter ideal and $H^*(M)\cong S/J\otimes H^0(M)$ with $H^0(M)$ one-dimensional. 

Now suppose that $L$ is a one-dimensional $Q$ representation over $\FF$ and that $J$ is such a parameter ideal. Then $J/(x_1,x_2)J$ is a two-dimensional $\FF [Q]$-module. The projection $J\to J/(x_1,x_2)J$ is an $\FF [Q]$-linear map. Since $Q$ is of odd order, it has a section $\sigma\colon J/(x_1,x_2)J \to J$.

We extend $\sigma$ to the map $J/(x_1,x_2)J\otimes S\to J\otimes S \to S$ and let $K$ be the Koszul complex of this map tensored with $L$, i.e.,
$$ K = L\otimes \Lambda (J/(x_1,x_2)J) \otimes S $$ 
with differential $l\otimes j_1 \wedge \ldots \wedge j_r \otimes s \mapsto \sum_{0\le k \le r} l\otimes j_1 \wedge \ldots\wedge \widehat{j_k}\wedge\ldots \wedge j_r \otimes \sigma(j_k)s$.
Since $\sigma$ is $\FF [Q]$-linear, the differential in $K$ is $S\ast Q$-linear.
We consider $K$ as an $S\ast Q$-dg module graded by $\deg(l\otimes j_1 \wedge \ldots \wedge j_r \otimes s)=(\sum_k \deg(j_k)) - r +\deg(s)$.

Note that $K\otimes_S \FF=L\otimes \Lambda(J/(x_1,x_2)J)$ with zero differential and grading shift as above. Since $J/(x_1,x_2)J$ is a two-dimensional graded $\FF$-module with generators in degrees $m+1,n+1$, the tensor product $L\otimes \Lambda(J/(x_1,x_2)J) $ is four-dimensional with generators in degrees $0,m,n,m+n$.

If $r_1, r_2$ is a basis of $J/(x_1,x_2)J$, then $\sigma(r_1), \sigma(r_2)$ is a minimal generating set of the parameter ideal $J$ and thus a regular sequence. It follows that 
$H^*(K)\cong L\otimes S/J$ by \cite[Corollary~1.6.14]{brunsherzog1993}.

Thus for an arbitrary pair $(L,J)$, we have constructed an object $K \in D^{hf}_{S\text{-}\perf}(S\ast Q)$ such that $H^*(K\otimes_S \FF)$ is four-dimensional, $H^0(K)\cong L$, and such that $J$ is the annihilator ideal of $H^*(K)$. 

Now consider $M\in D^{hf}_{S\text{-}\perf}(S\ast Q)$ such that $H^*(M\otimes_S \FF)$ is four-dimensional.
Let $L=H^0(M)$ and let $J$ be the annihilator ideal of $H^*(M)$. We construct a quasi-isomorphism $K\to M$, where $K$ is constructed as above. Let $W$ denote the graded $Q$-representation $J/(x_1,x_2)J$.

The $S$-module structure induces an isomorphism $ H^0(M)\otimes S/J \cong H^*(M)$ by \cref{lem:bijectionfourdim}. 
Denoting the differential of $M$ by $d$, consider a section $H^*(M)\to \ker d$ of graded $\FF [Q]$-modules. We obtain an $\FF[Q]$-linear map $f_0\colon L\otimes \Lambda^0(W) =H^0(M)\otimes \FF \to M$. By adjunction we can extend it to an $S\ast Q$-linear map $f_0\colon L\otimes \Lambda^0(W)\otimes S\to M$.

Next we construct a map $L\otimes \Lambda^1(W)\otimes S =L\otimes W\otimes S \to M$. Consider the map $L\otimes J/(x_1,x_2)J\to M$ given by $l\otimes j\mapsto f_0(l)\cdot \sigma(j)$. Since $H^*(M)\cong L\otimes S/J$ by \cref{lem:bijectionfourdim}, it follows that this map hits only boundary elements in $M$. Thus it can be lifted to an $\FF [Q]$-linear map $f_1\colon L\otimes W\to M$ satisfying $d\circ f_1(l\otimes w)=f_0(l)\cdot \sigma(w)$. Again by adjunction, we extend it to an $S\ast Q$-linear map $f_1\colon L\otimes \Lambda^1(W)\otimes S\to M$. 

Consider the $\FF[Q]$-linear map $$g\colon L\otimes \Lambda^2(W)\to M, l\otimes (w_1\wedge w_2)\mapsto f_1(l\otimes w_1)\cdot \sigma(w_2) + f_1(l\otimes w_2)\cdot\sigma(w_1).$$ 
The boundary of the right-hand side is $$df_1(l\otimes w_1)\cdot \sigma(w_2) + df_1(l\otimes w_2)\cdot \sigma(w_1)
=f_0(l)\sigma(w_1)\sigma(w_2) + f_0(l)\sigma(w_2)\sigma(w_1)=0.$$ 
Since $\Lambda^2(W)$ is one-dimensional with a generator in degree $m+n+2$ and $f_1$ lowers degrees by one, the image of $g$ is concentrated in degree $m+n+1$. By \cite[Theorem~5.4.1]{neuselsmith2002},  $S/J$ is a Poincar\'e duality algebra with fundamental class in degree $m+n$, in particular $H^{m+n+1}(M)=0$.
Thus the image of $g$ is contained in the boundaries of $M$ and since $\FF [Q]$ is semisimple we can lift $g$ to a map $f_2\colon L \otimes \Lambda^2(W)\to M^{m+n}$ with $df_2=g$ and extend it to an $S\ast Q$-linear map $f_2\colon L\otimes \Lambda^2(W)\otimes S\to M$.

Now define 
$$F\colon L\otimes \Lambda(W) \otimes S=L\otimes (\Lambda^2(W)\oplus \Lambda^1(W)\oplus \Lambda^0(W))\otimes S\to M$$
by $F=(f_2,f_1,f_0)$.
A straightforward computation shows that $F$ is a chain map. By construction, $F$ induces an isomorphism on $H^0$. It follows that $H^*(F)$ is an isomorphism since it is the composite
\[H^*(K)\cong S/J\otimes H^0(K)\cong S/J\otimes H^0(M)\cong H^*(M).\]

Note that $H^*(K\otimes_S \FF)=L\otimes \Lambda(W)$, and thus $t=m+n$. 
\end{proof}
We deduce the main result of this section.
\begin{theorem}\label{thm:classification_four_dim_perfect} There is a bijection between isomorphism classes of perfect dg modules over $\Lambda \ast Q$ with four-dimensional homology and triples $(l,L,J)$ where $l$ is an integer, $J\subset \FF[x_1,x_2]$ is a $Q$-invariant parameter ideal and $L$ is a one-dimensional $Q$-representation. 
\end{theorem}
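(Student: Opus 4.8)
The plan is to obtain the bijection by composing the two bijections of \cref{lem:bijectionfourdim} and \cref{lem:bijectionfourdimforM}, after factoring out a single overall degree shift recorded by the integer $l$.

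First I would describe the forward map. Given a perfect dg module $C$ over $\Lambda\ast Q$ with four-dimensional homology, let $l=l(C)\in\ZZ$ be the lowest degree in which $H^*(C)$ is nonzero; this depends only on the isomorphism class of $C$ in $D_{\perf}(\Lambda\ast Q)$. Let $C[l]$ denote the shift of $C$ whose (four-dimensional) homology is then supported, with multiplicity, in degrees $0\leq m\leq n\leq t$ for uniquely determined integers $m,n,t$ with $H^0(C[l])\neq 0$. Applying \cref{lem:bijectionfourdim} to $C[l]$ yields an object $M\in D^{hf}_{S\text{-}\perf}(S\ast Q)$ with $H^*(M\otimes_S\FF)$ four-dimensional in degrees $0\leq m\leq n\leq t$, and \cref{lem:bijectionfourdimforM} attaches to $M$ a pair $(L,J)$, where $L=H^0(M)$ is a one-dimensional $Q$-representation and $J\subset\FF[x_1,x_2]$ is a $Q$-invariant parameter ideal with parameters in degrees $m+1$ and $n+1$ (forcing $t=m+n$). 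The triple assigned to $C$ is $(l,L,J)$; by \cref{rem:identHBPandA} and \cref{lem:bijectionfourdim} this is $l$ together with $\Ext^l_{\Lambda}(\FF,C)$ and the annihilator ideal of $\Ext^*_{\Lambda}(\FF,C)$.

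For the inverse map I would check that a triple $(l,L,J)$ as in the statement carries exactly the input needed by \cref{lem:bijectionfourdimforM}. The one point of content beyond bookkeeping is that a $Q$-invariant parameter ideal $J$ in $S=\FF[x_1,x_2]$ intrinsically determines two parameter degrees: since $S$ is a two-dimensional graded Cohen--Macaulay ring, $J$ is generated by a homogeneous system of parameters of length two, which is a regular sequence, so the associated Koszul relations (equivalently, the graded Nakayama lemma applied to $J/(x_1,x_2)J$, which is two-dimensional as in the proof of \cref{lem:parameteridealFromPerfectComplex}) force every minimal homogeneous generating set of $J$ to consist of exactly two elements, of well-defined degrees $m+1\leq n+1$. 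Thus $(l,L,J)$ determines $m$ and $n$; feeding $(L,J)$ into \cref{lem:bijectionfourdimforM} produces an object of $D^{hf}_{S\text{-}\perf}(S\ast Q)$, then \cref{lem:bijectionfourdim} produces a perfect dg module over $\Lambda\ast Q$ with four-dimensional homology in degrees $0\leq m\leq n\leq m+n$, and undoing the shift by $l$ yields a perfect dg module $C$ with homology in degrees $l\leq l+m\leq l+n\leq l+m+n$.

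That the two assignments are mutually inverse is then formal: \cref{lem:bijectionfourdim} and \cref{lem:bijectionfourdimforM} are bijections on each fixed-degree stratum, the shift functors are exact autoequivalences of $D_{\perf}(\Lambda\ast Q)$ compatible with all the constructions involved, and $l$ is recovered from $C$ as the lowest degree of nonzero homology while $(L,J)$ is recovered from $C[l]$ by the composite of the two lemmas. I expect the main obstacle to be only this bookkeeping of the degree data, namely verifying that $J$ alone already pins down the degrees $m+1$ and $n+1$ so that nothing is lost or double-counted when passing between triples $(l,L,J)$ and the fixed-degree bijections of the two lemmas; the remaining content is a direct application of the results of \cref{sec:equivariantbgg} and of the earlier results of this section.
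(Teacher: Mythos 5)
Your proposal matches the paper's proof: assign to $C$ the triple $(l,L,J)$ with $L=\Ext^l_\Lambda(\FF,C)$ and $J$ the annihilator ideal of $\Ext^*_\Lambda(\FF,C)$, shift so that $l=0$, and compose the bijections of \cref{lem:bijectionfourdim} and \cref{lem:bijectionfourdimforM}. The extra care you take in checking that $J$ alone determines the parameter degrees $m+1,n+1$ (via two-dimensionality of $J/(x_1,x_2)J$ and graded Nakayama), so the fixed-degree strata of the two lemmas glue into a single bijection over all triples, is a correct and worthwhile detail that the paper leaves implicit in the word ``combining.''
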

\begin{proof}
    For a perfect dg module $C^*$ over $\Lambda \ast Q$ with four-dimensional homology, let $l\in \ZZ$ be the lowest degree in which $H^*(C)$ is nonzero. 
    
    Let $L=H^l(\Hom_{\Lambda}(\varepsilon^*,C))=\Ext^l_\Lambda(\FF,C)$.
    Let $J\subset \FF[x_1,x_2]\cong \Ext^*_\Lambda(\FF,\FF)$ be the annihilator ideal of $\Ext^*_\Lambda(\FF, C)\cong H^*(\Hom_{\Lambda}(\varepsilon^*,C))$.
    We show that the assignment $C\mapsto (l,L,J)$ is a bijection as desired.
    
    After shifting $C$, we may assume that $l=0$. Then combining \cref{lem:bijectionfourdim} with \cref{lem:bijectionfourdimforM} shows that the assignment is indeed a bijection as claimed.
\end{proof}

For $Q$ acting on $P=\ZZ/2\times \ZZ/2$, we will identify the $Q$-equivariant algebra $\FF[P]$ with a suitable exterior algebra.

\begin{example}
For $Q=C_3=\langle q\rangle$ acting on $(\ZZ/2)^2$ nontrivially, the $Q$-action on $\FF_2[(\ZZ/2)^2]\cong \FF_2[\lambda_1,\lambda_2]/(\lambda_1^2,\lambda_2^2)$ is $q(\lambda_1)= \lambda_2$, $q(\lambda_2)= \lambda_1 + \lambda_2 + \lambda_1\lambda_2$.
This $Q$-equivariant algebra is isomorphic to an exterior algebra $\Lambda(V)$ on an equivariant vector space $V=\FF_2y_1\oplus \FF_2y_2$ as follows. Let $Q$ act on $V$ by $q(y_1)=y_2$ and $q(y_2)=y_1+y_2$. Then
\begin{align*}
    \Lambda(V)\to \FF_2[(\ZZ/2)^2]\quad, y_1\mapsto \lambda_1 + \lambda_1\lambda_2,\quad y_2\mapsto \lambda_2 + \lambda_1\lambda_2,
\end{align*}
is an equivariant isomorphism of algebras. Note that it does not preserve the internal gradings of the exterior algebras $\Lambda$ and $\FF_2[(\IZ/2)^2]$. One can verify by hand that there is no equivariant isomorphism $\Lambda(V)\to \FF_2[(\ZZ/2)^2]$ which preserves the grading.
\end{example}

The preceding example can be generalized.

\begin{lemma}\label{lem:algebra_isomorphism}
Let $\FF$ be a field of characteristic two, let $P=(\ZZ/2)^n$ and $Q$ a finite group of odd order acting on $P$. 
Let $I$ denote the maximal ideal in $\FF[P]$. Then there is a
$Q$-equivariant algebra isomorphism $\Lambda(I/I^2)\to \FF[P]$ which respects the augmentations.
\end{lemma}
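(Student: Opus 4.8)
The plan is to build the isomorphism by first linearizing the $Q$-action on $\FF[P]$ and then bootstrapping a compatible splitting of the filtration by powers of $I$. Write $\FF[P] = \FF[\lambda_1,\dots,\lambda_n]/(\lambda_i^2)$ with $\lambda_i = f_i - 1$ for generators $f_i$ of $P$; then $I = (\lambda_1,\dots,\lambda_n)$ and the associated graded ring $\gr \FF[P] = \bigoplus_s I^s/I^{s+1}$ is canonically the exterior algebra $\Lambda(I/I^2)$, with its $Q$-action the one from \cref{lem:qactsongrFP}. The goal is therefore to produce a $Q$-equivariant, augmentation-preserving algebra map $\phi\colon \Lambda(I/I^2)\to \FF[P]$ lifting the identity on the associated graded, which is automatically an isomorphism by a filtered-degree argument (it induces an iso on $\gr$, and both sides are finite-dimensional).

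\emph{Step 1: choose an equivariant linear lift.} The quotient $I \twoheadrightarrow I/I^2$ is a surjection of $\FF[Q]$-modules, and since $|Q|$ is odd it is invertible in $\FF$, so $\FF[Q]$ is semisimple and there is a $Q$-equivariant section $s\colon I/I^2 \to I \subset \FF[P]$. Write $V \coloneqq I/I^2$. By the universal property of the exterior algebra, $s$ extends to an algebra homomorphism $\phi\colon \Lambda(V)\to \FF[P]$ \emph{provided} $s(v)^2 = 0$ in $\FF[P]$ for every $v\in V$ and $s(v)s(w) + s(w)s(v) = 0$; over a field of characteristic two these two conditions are the single requirement that $s(v)^2 = 0$ for all $v$ (polarize), and this need \emph{not} hold for an arbitrary section. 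So the section must be corrected.

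\emph{Step 2: correct the section so that squares vanish.} The map $q_V\colon V \to I^2/I^4$, $v\mapsto [s(v)^2] \bmod I^4$ (note $I^4$ may be zero if $n$ is small, but in general we work modulo higher powers) is a ``quadratic-type'' obstruction; because we are in characteristic two, $v\mapsto s(v)^2$ is additive modulo the cross terms, and one checks $s(v+w)^2 = s(v)^2 + s(w)^2 + (s(v)s(w)+s(w)s(v))$, where the last term lands in $I^{\geq 3}$ when... — more robustly, the clean approach is to correct $s$ degree by degree along the filtration. Concretely, suppose inductively we have an equivariant linear map $s\colon V\to I$ lifting the identity such that $s(v)^2 \in I^{k}$ for all $v$, for some $k\geq 2$; I would show the assignment $v\mapsto [s(v)^2]\in I^k/I^{k+1}$ is (the polarization of) a $Q$-equivariant quadratic map, hence by semisimplicity can be written as $v \mapsto [\text{something involving } s]$ and killed by modifying $s$ by an equivariant map $V \to I^{\lceil k/2\rceil}$ — the point being that if $t\colon V\to I^j$ is equivariant with $j\geq 2$ then replacing $s$ by $s+t$ changes $s(v)^2$ by $t(v)^2 + s(v)t(v)+t(v)s(v) \in I^{j+1}$, raising the order of vanishing. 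Iterating (the filtration is finite, $I^{n+1}=0$) produces an equivariant $s$ with $s(v)^2 = 0$ identically, hence the desired algebra map $\phi$.

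\emph{Step 3: conclude.} The map $\phi\colon \Lambda(V)\to\FF[P]$ is $Q$-equivariant by construction (each correction step used only equivariant maps) and augmentation-preserving since $s(V)\subset I$ and the augmentation ideal of $\Lambda(V)$ is $V\cdot\Lambda(V)$. It induces the identity $V \xrightarrow{\sim} I/I^2$ on degree-one associated graded pieces, and since $\Lambda(V)$ is generated in degree one while $\gr\FF[P] = \Lambda(I/I^2)$, the induced map on associated graded rings is the identity; a finite filtered vector space map inducing an isomorphism on $\gr$ is an isomorphism, so $\phi$ is an equivariant algebra isomorphism. The main obstacle is Step 2: verifying that the characteristic-two quadratic obstruction to $s$ being ``exterior'' is equivariant and can be stripped off one filtration layer at a time without destroying equivariance — the semisimplicity of $\FF[Q]$ is what makes each layer splittable, so this is where the odd-order hypothesis is essential.
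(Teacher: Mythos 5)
The central worry driving your Step 2 is unfounded, and this is the one real gap in the proposal: for \emph{any} $\FF$-linear section $s\colon I/I^2\to I\subset\FF[P]$, the condition $s(v)^2=0$ holds automatically, so no correction is needed and your argument collapses (correctly) to the paper's. The point you overlook is that $\FF[P]$ is itself an exterior algebra when $\mathrm{char}\,\FF=2$ and $P=(\ZZ/2)^n$: with $\lambda_i=f_i-1$ one has $\lambda_i^2=f_i^2-2f_i+1=0$, so $\FF[P]\cong\FF[\lambda_1,\dots,\lambda_n]/(\lambda_i^2)$. In a commutative $\FF$-algebra of characteristic two squaring is additive (Frobenius), so for any $a=\sum_{\emptyset\neq S}c_S\lambda_S\in I$ one gets $a^2=\sum_S c_S^2\lambda_S^2=0$. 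Thus \emph{every} element of $I$, in particular $s(v)$ for every section $s$ and every $v$, squares to zero, and the universal property of $\Lambda(I/I^2)$ gives the equivariant algebra map directly from any equivariant section — which is exactly what the paper does (``since the target algebra is also an exterior algebra''). Your degree-by-degree correction scheme is therefore solving a non-problem; and as written it is also not self-contained (you never verify that $v\mapsto[s(v)^2]$ is a well-defined $\FF[Q]$-linear obstruction class, nor that it can be killed by an equivariant modification, and the explanation trails off), so it would not serve as a repair even if a repair were needed.

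Steps 1 and 3 are fine and essentially match the paper: semisimplicity of $\FF[Q]$ (from $|Q|$ odd, $\mathrm{char}\,\FF=2$) gives the equivariant section, and the map visibly respects augmentations. The only genuine (and harmless) difference from the paper is in how you conclude it is an isomorphism: you pass to the $I$-adic associated graded and observe the induced map is the identity, while the paper argues surjectivity directly (a lift of a basis of $I/I^2$ generates $\FF[P]$) and lets equality of finite dimensions do the rest. Both are correct; once Step 2 is deleted, your proof and the paper's are essentially the same.
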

\begin{proof}
Let $k:I/I^2\to I\subset \FF [P]$ be a $Q$-equivariant section.
Since the target algebra is also an exterior algebra, $k$ extends to an equivariant algebra homomorphism $\Lambda(I/I^2)\to \FF[P]$. It remains to show that it is surjective. This holds since $k$ is a section and any lift of a basis of $I/I^2$ generates $\FF [P]$.

By construction this map sends the augmentation ideal of $\Lambda(I/I^2)$ to the augmentation ideal of $\FF [P]$ and thus it is compatible with the augmentations.
\end{proof}

The classification from \cref{thm:classification_four_dim_perfect} now yields a classification for perfect cochain complexes up to quasi-isomorphism (or equivalently up to isomorphism in the derived category) with small homology for extensions by $(\ZZ/2)^2$.

\begin{corollary}\label{cor:classification_four_dim_perfect} Let $Q$ be a group of odd order acting on $P=\ZZ/2\times \ZZ/2$. There is a bijection between isomorphism classes of perfect cochain complexes over $\FF[P\rtimes Q]$ with four-dimensional homology and triples $(l,L,J)$ where $l$ is an integer, $J\subset \FF[x_1,x_2]$ is a $Q$-invariant parameter ideal and $L$ a one-dimensional $Q$-representation. 
\end{corollary}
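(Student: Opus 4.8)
The plan is to reduce \cref{cor:classification_four_dim_perfect} to \cref{thm:classification_four_dim_perfect} by identifying the group algebra $\FF[P\rtimes Q]$ with a skew group algebra $\Lambda\ast Q$ for a suitable exterior algebra $\Lambda$. First I would observe that by \cref{ex:sesisCrossedProduct}, applied to the split extension $P\to P\rtimes Q\to Q$ with the obvious section of groups, we have an isomorphism of augmented algebras $\FF[P]\ast Q\cong \FF[P\rtimes Q]$, where $Q$ acts on $\FF[P]$ by conjugation. Next I would invoke \cref{lem:algebra_isomorphism}: since $Q$ has odd order and $P=(\ZZ/2)^2$, there is a $Q$-equivariant, augmentation-preserving algebra isomorphism $\Lambda(I/I^2)\xrightarrow{\ \cong\ }\FF[P]$, where $I$ is the augmentation ideal and $V\coloneqq I/I^2$ is a two-dimensional $Q$-representation. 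Setting $\Lambda=\Lambda(V)$, functoriality of the skew group construction in the $Q$-equivariant algebra then gives an isomorphism of augmented algebras $\Lambda\ast Q\cong \FF[P]\ast Q\cong \FF[P\rtimes Q]$.

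The next step is to transport the classification through this isomorphism. An isomorphism $\phi\colon \Lambda\ast Q\to \FF[P\rtimes Q]$ of augmented algebras induces an equivalence of (derived) module categories, restriction along $\phi$, which sends perfect cochain complexes to perfect cochain complexes (being compatible with the augmentations, it also respects the trivial modules). Hence it induces a bijection between quasi-isomorphism classes of perfect cochain complexes over $\FF[P\rtimes Q]$ with four-dimensional total homology and quasi-isomorphism classes of perfect dg modules over $\Lambda\ast Q$ with four-dimensional total homology; the total homology and its internal grading are unchanged since $\phi$ is the identity in a suitable sense on the underlying complexes. I would then note that quasi-isomorphism classes of perfect dg modules over $\Lambda\ast Q$ coincide with isomorphism classes in $D_{\perf}(\Lambda\ast Q)$, so \cref{thm:classification_four_dim_perfect} applies verbatim and furnishes the bijection with triples $(l,L,J)$.

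One point that needs a little care is that \cref{thm:classification_four_dim_perfect} is phrased for perfect dg modules over $\Lambda\ast Q$ while \cref{cor:classification_four_dim_perfect} is about perfect cochain complexes over $\FF[P\rtimes Q]$; but since $\Lambda$ is concentrated in internal degree zero, a perfect dg module over $\Lambda\ast Q$ is the same data as a perfect cochain complex of $\Lambda\ast Q$-modules (the internal grading forces the differential to have the cochain degree), so there is nothing to reconcile beyond unwinding definitions. A second point is that the parameter ideal $J$ in \cref{thm:classification_four_dim_perfect} lives in $\FF[x_1,x_2]\cong \Ext_\Lambda^*(\FF,\FF)$, which by \cref{rem:identHBPandA} (and \cref{lem:algebra_isomorphism}) is $Q$-equivariantly identified with $\Ext_{\FF[P]}^*(\FF,\FF)=H^*(BP;\FF)$; this is the identification under which the statement of the corollary, and of \cref{thm:classification_four_dim_perfect_intro} in the introduction, is to be read.

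I do not expect a genuine obstacle here: the corollary is essentially a translation of \cref{thm:classification_four_dim_perfect} under the dictionary provided by \cref{ex:sesisCrossedProduct} and \cref{lem:algebra_isomorphism}. The only mildly delicate ingredient is checking that the isomorphism $\FF[P]\cong\Lambda(V)$ of \cref{lem:algebra_isomorphism} is compatible with everything in sight — the $Q$-action, the augmentation, and hence the trivial module and the $\Ext$-algebras — but all of this is already packaged into that lemma and into \cref{lem:equivariantactionOfExt}, so the proof is short.
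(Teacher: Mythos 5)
Your proof is correct and takes essentially the same route as the paper: identify $\FF[P\rtimes Q]\cong\FF[P]\ast Q$ via the split extension (\cref{ex:sesisCrossedProduct}), use \cref{lem:algebra_isomorphism} to replace $\FF[P]$ by an exterior algebra $\Lambda(V)$ $Q$-equivariantly, and then invoke \cref{thm:classification_four_dim_perfect}. The paper's own proof is simply a more terse version of the same argument, so there is nothing to reconcile beyond the level of detail.
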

\begin{proof}
    The augmented $Q$-algebra $\FF[P]$ is isomorphic to an augmented $Q$-algebra $\Lambda$ on a two-dimensional $Q$-representation $V$ by \cref{lem:algebra_isomorphism}. Thus $\FF[P\rtimes Q]\cong \FF[P]\ast Q\cong \Lambda \ast Q$ and the bijection follows from \cref{thm:classification_four_dim_perfect}.
\end{proof}

Methods of Benson and Carlson construct perfect cochain complexes with trivial action on homology; see \cite[Theorem~4.1]{bensoncarlson1994}.

\begin{remark} For $P=\ZZ/2\times \ZZ/2$, any perfect cochain complex $C^*$ over $\FF[P]$ is either isomorphic to the zero complex in the perfect derived category or has at least four-dimensional homology. Thus if $\dim_\FF H^*(C^*)=4$, then $C^*$ is an indecomposable object in the perfect derived category. Not all indecomposable objects have four-dimensional total homology, e.g., the cochain complex from \cite[Example~3.8]{ruepingstephan2022} has six-dimensional homology and in particular is indecomposable.
\end{remark}

\section{Computing finiteness obstructions}\label{sec:finiteness_obstruction}
As in \cref{sec:spectralsequence_extensions}, consider a short exact sequence
\begin{equation}\label{eq:shortexactsequence}
1\to P\to G\stackrel{pr}{\to} Q\to 1
\end{equation}
of finite groups such that $P$ is a $p$-group and $\FF$ a field of characteristic $p$. In topology, we are interested in cochain complexes of finite, free $G$-CW complexes. These are not just be perfect cochain complexes, but in fact bounded complexes of finitely generated free $\FF[G]$-modules. In this section we consider the finiteness obstruction to determine whether a perfect cochain complex is homotopy equivalent to a finite, free one.

The finiteness obstruction is an element of the reduced Grothendieck group $\tilde{K}_0(\FF [G])$. This group is isomorphic to $\tilde{K}_0(\FF [Q])$ as we will explain below.

\begin{lemma}\label{lem:propOfJ} Let $J\subset \FF[G]$ be the two-sided ideal generated by the augmentation ideal $I\subset \FF [P]$. Then $J$ is the kernel of $pr_*\colon \FF[G] \to \FF[Q]$ and $J$ is nilpotent.
\end{lemma}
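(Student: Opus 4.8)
The plan is to prove the two assertions separately, starting with the identification of $J$ as $\ker(pr_*)$. The ring homomorphism $pr_*\colon \FF[G]\to \FF[Q]$ is surjective, so its kernel is a two-sided ideal, and I would first observe that $J\subseteq \ker(pr_*)$: the augmentation ideal $I\subseteq \FF[P]$ is spanned by elements $f-1$ with $f\in P$, and $pr_*(f-1)=pr(f)-1=0$ since $P=\ker(pr)$; as $\ker(pr_*)$ is a two-sided ideal, the two-sided ideal $J$ generated by $I$ is contained in it. For the reverse inclusion, I would use the decomposition of $\FF[G]$ as a free left $\FF[P]$-module on a set of coset representatives $s(q)$, $q\in Q$, with $s(e)=e$ (as in \cref{ex:sesisCrossedProduct}): writing a general element as $\sum_{q} a_q\, s(q)$ with $a_q\in \FF[P]$, its image under $pr_*$ is $\sum_q \aug(a_q)\, q$, which vanishes iff $\aug(a_q)=0$ for all $q$, i.e. iff each $a_q\in I$; hence the element lies in $\FF[P]\text{-span of } I\cdot s(q)\subseteq J$. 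This gives $\ker(pr_*)\subseteq J$ and hence equality.

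For nilpotency, I would use that $I\subseteq \FF[P]$ is nilpotent since $P$ is a finite $p$-group over a field of characteristic $p$ (the augmentation ideal of the group algebra of a finite $p$-group in characteristic $p$ is nilpotent; this is recalled in \cref{sec:spectralsequence_extensions}). Say $I^{N}=0$. The subtlety is that $J$ is the \emph{two-sided} ideal generated by $I$, so a priori a product of $N$ elements of $J$ is a sum of terms each of which interleaves ring elements of $\FF[G]$ with elements of $I$, not a power of $I$ inside $\FF[P]$. The key observation is that $I$ is stable under conjugation by $G$: for $g\in G$ and $f\in P$ one has $g(f-1)g^{-1}=gfg^{-1}-1\in I$ because $gfg^{-1}\in P$ (as $P\trianglelefteq G$), so $gIg^{-1}=I$ and hence $g I^k g^{-1}=I^k$ for all $k$ — exactly the invariance used in the proof of \cref{lem:filtrationequivariant}. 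Consequently every element of $J$ can be rewritten as a left (or right) $\FF[G]$-multiple of an element of $I$: indeed $J=\FF[G]\cdot I=I\cdot\FF[G]$, since $\FF[G]\cdot I$ is already a two-sided ideal (it absorbs right multiplication by $\FF[G]$ because $I\cdot\FF[G]=\FF[G]\cdot I$ using $Ig=g(g^{-1}Ig)=gI$). Then, moving all the $\FF[G]$-factors to the left using $Ig=gI$ repeatedly, one gets $J^N=\FF[G]\cdot I^N=0$.

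I would streamline the write-up by first establishing $J=\FF[G]\cdot I=I\cdot\FF[G]$ from the conjugation-invariance $gIg^{-1}=I$, since this single identity simultaneously pins down the additive structure of $J$ (making the $\ker(pr_*)$ computation transparent via the $\FF[P]$-basis $\{s(q)\}$) and yields $J^N=\FF[G]I^N=0$ immediately. The only step requiring care — and the one I'd flag as the main obstacle — is the bookkeeping showing $J=\FF[G]\cdot I$ genuinely holds at the level of two-sided ideals, i.e. that $\FF[G]\cdot I$ is closed under right multiplication; this is where the normality of $P$ in $G$ is essential and is the one place a reader might want the conjugation identity spelled out explicitly. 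Everything else is a routine consequence.
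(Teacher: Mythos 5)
Your proposal is correct and follows essentially the same route as the paper: both directions of the kernel identification reduce to the coset decomposition of $\FF[G]$ as a free $\FF[P]$-module, and nilpotency in both arguments comes from using normality of $P$ to commute the augmentation ideal $I$ past $\FF[G]$, giving $J^N = \FF[G]\cdot I^N = 0$. The only cosmetic difference is that the paper phrases the commutation step in terms of the generating set $S=\{p-1\mid p\in P\}$ rather than the ideal $I$ itself.
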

\begin{proof} For $p\in P$, the element $p-1$ is in the kernel of $\FF [G]\rightarrow \FF [Q]$ and so is the ideal $I$ generated by these elements. We verify that an arbitrary element $\sum_{g\in G}\lambda_gg$ of this kernel belongs to $J$.
Since    
    \[\sum_{g\in G} \lambda_g g = \sum_{q\in Q} \sum_{g\in pr^{-1}(q)} \lambda_g g\]
    it suffices to show that $\sum_{g\in pr^{-1}(q)} \lambda_g g\in J$ for any fixed $q\in Q$.
    For each $q\in Q$ we have 
    $\sum_{g\in pr^{-1}(q)}\lambda_g=0$. So for a fixed $g'\in pr^{-1}(q)$, we have $\sum_{p\in P} \lambda_{pg'} =0$.
    Thus \[\sum_{p\in P}\lambda_{pg'}pg' = \sum_{p\in P\setminus\{e\}}\lambda_{pg'}(p-1)g'\]
    and this is an element of $J$.
To prove the second statement, let $S$ denote the set $\{p-1\mid p\in P\}$. Since $P$ is normal, we have $\FF [G]\cdot S \cdot \FF [G] = \FF [G]\cdot S$ as sets and inductively $(\FF [G] \cdot S\cdot \FF [G])^n = \FF [G] \cdot(S^n)$. Since the augmentation ideal of a finite $p$-group in modular characteristic is nilpotent, the set $S^n$ is zero for $n$ large enough, and so is $J$.\qedhere
\end{proof}

If $p$ does not divide the order of $Q$ then $I\cdot \FF[G]$ is the Jacobson radical of $\FF[G]$ by \cite[Corollary to Theorem 1]{potter1977}, but we will not use this statement.

We denote the Grothendieck group of isomorphism classes of finitely generated projective modules over a ring $R$ by $K_0(R)$.
\begin{lemma}\label{lem:piindk0iso}
The induced map
\[K_0(pr_*)\colon K_0(\FF [G])\rightarrow K_0(\FF [Q])\]
is an isomorphism.
\end{lemma}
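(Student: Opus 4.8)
The plan is to reduce the claim to the statement that the kernel $J$ of $pr_*\colon \FF[G]\to\FF[Q]$, which is nilpotent by \cref{lem:propOfJ}, induces an isomorphism on $K_0$. This is a standard fact: for a ring surjection $R\to R/J$ with $J$ nilpotent (or more generally $J$ contained in the Jacobson radical), idempotents lift along the quotient and projective modules lift uniquely up to isomorphism, so $K_0(R)\to K_0(R/J)$ is an isomorphism. So the bulk of the proof is just citing this.

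First I would recall or cite the idempotent-lifting result: if $J\subseteq R$ is a nil ideal, then every idempotent of $R/J$ lifts to an idempotent of $R$, and two idempotents of $R$ that become conjugate in $R/J$ are already conjugate in $R$ (see e.g.\ \cite[Proposition~I.1.?]{} — in practice one cites a standard algebra reference such as Bass or Lam). Second, I would translate this into the statement about projective modules: since finitely generated projective $R$-modules correspond to idempotent matrices over $R$ up to the usual equivalence, surjectivity of $K_0(pr_*)$ follows from lifting idempotent matrices over $\FF[Q]$ to idempotent matrices over $\FF[G]$, and injectivity follows from the fact that if two finitely generated projective $\FF[G]$-modules $P_1,P_2$ satisfy $P_1\otimes_{\FF[G]}\FF[Q]\cong P_2\otimes_{\FF[G]}\FF[Q]$, then the corresponding idempotents are conjugate mod $J$ hence conjugate, so $P_1\cong P_2$; together these give that $K_0(pr_*)$ is a bijection on the monoid of iso classes, hence an isomorphism of Grothendieck groups.

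Concretely, the proof can be written as: \emph{By \cref{lem:propOfJ}, the kernel $J$ of $pr_*$ is nilpotent. For any ring $R$ and nilpotent two-sided ideal $J\subseteq R$, the projection $R\to R/J$ induces an isomorphism $K_0(R)\to K_0(R/J)$, since idempotents and conjugacy of idempotents lift uniquely along the surjection; see \cite[Chapter~?]{} (or Milnor's \emph{Introduction to Algebraic K-theory}). Applying this to $R=\FF[G]$ gives the claim.} I do not expect any genuine obstacle here — the only thing to be careful about is to invoke a clean reference for idempotent lifting through a nilpotent ideal and for the induced iso on $K_0$, rather than reproving it; the nilpotence of $J$ has already been established, which is the one nontrivial input. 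If one wanted a self-contained argument, the main (still routine) step would be verifying that conjugate-mod-$J$ idempotents are conjugate, via the standard trick of writing the conjugating unit as $1+$ (nilpotent).
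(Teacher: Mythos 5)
Your proposal matches the paper's proof exactly: both reduce to the nilpotence of $\ker(pr_*)$ from \cref{lem:propOfJ} and then cite the standard fact that a nilpotent ideal induces an isomorphism on $K_0$. The paper's reference for that fact is \cite[Lemma~II.2.2]{weibel2013}, which is the clean citation you were looking for.
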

\begin{proof}
Since $\ker(pr_*)$ is nilpotent by \cref{lem:propOfJ}, the induced map on $K_0$ is an isomorphism; see \cite[Lemma~II.2.2]{weibel2013}.
\end{proof}

\begin{definition}
  For a perfect cochain complex $C^*$ over $\FF[G]$, the \emph{Euler characteristic} of $C^*$ is
  \[\chi(C^*)= \sum_i (-1)^i[C^i]\in K_0(\FF [G]).
\] The image $\tilde{\chi}(C^*)$ of $\chi(C^*)$ in the reduced projective class group $\tilde K_0(\FF [G])$ is called the \emph{finiteness obstruction} of $C^*$.
\end{definition}

The element $\tilde{\chi}(C^*)$ vanishes if and only if the perfect cochain complex $C^*$ is homotopy equivalent to a finite, free complex; see \cite{ranicki1985}.

If $p$ does not divide the order of $Q$, then any $\FF [Q]$-module is projective. In particular $\gr(\FF [P])$ from \cref{lem:qactsongrFP} represents a class in $K_0(\FF [Q])$. Moreover, $P\subset G$ is a normal $p$-Sylow subgroup and the short exact sequence \eqref{eq:shortexactsequence} splits by the Schur-Zassenhaus Theorem; see \cite[(8.35)~Theorem]{curtisreiner1981}. In the following result we choose a splitting so that $G$ is a semidirect product $G=P\rtimes Q$ and $[\gr(\FF [P])]=[\res^G_Q \FF [P]]$ in $K_0(\FF [Q])$. If $Q$ is trivial, then the statement reduces to the fact that the ordinary Euler characteristic of a finite, free $\FF [P]$-cochain complex is divisible by the order of $P$. 

\begin{proposition}\label{prop:simplifyingchi}
Let $C^*$ be a perfect $\FF [G]$-cochain complex and assume that $p$ does not divide the order of $Q$. We have
\[pr_*(\chi(C^*))\cdot [\gr(\FF [P])] = \chi (\res_Q^G H^*(C^*))\in K_0(\FF [Q]).\]
Hence if $[\gr(\FF [P])]$ is not a zero-divisor in the ring $K_0(\FF [Q])$, then the finiteness obstruction of $C^*$ vanishes if and only if $\chi(\res^G_Q H^*(C^*))$ lies in the subgroup generated by the product $[\gr(\FF [P])]\cdot [\FF[Q]]$  in $K_0(\FF [Q])$.
\end{proposition}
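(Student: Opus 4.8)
The plan is to compute $pr_*(\chi(C^*))\cdot[\gr(\FF[P])]$ in $K_0(\FF[Q])$ by filtering $C^*$ over $\FF[P]$ and comparing two Euler characteristics. The key observation is that since $p\nmid |Q|$, the group algebra $\FF[Q]$ is semisimple, so every $\FF[Q]$-module is projective and $K_0(\FF[Q])$ is a commutative ring under $\otimes_\FF$. First I would choose a splitting $G=P\rtimes Q$ by Schur--Zassenhaus, so that every $\FF[G]$-module restricts to an $\FF[Q]$-module, and $\res^G_Q\FF[G]\cong \FF[P]\otimes_\FF\FF[Q]$ with $\FF[P]$ viewed with its conjugation $Q$-action. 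In $K_0(\FF[Q])$ we then have $[\res^G_Q\FF[G]] = [\FF[P]]\cdot[\FF[Q]]$, and the filtration of $\FF[P]$ by powers of the augmentation ideal $I$ (which is $Q$-invariant by the proof of \cref{lem:filtrationequivariant}) gives $[\FF[P]]=[\gr(\FF[P])]$ in $K_0(\FF[Q])$, since a filtered module and its associated graded have the same class in $K_0$ of a semisimple ring.

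Next I would combine two facts about $\chi$ of a perfect complex. On the one hand, $C^*$ is a bounded complex of finitely generated projective $\FF[G]$-modules; restricting to $\FF[Q]$ and using the above, $pr_*(\chi(C^*))\cdot[\gr(\FF[P])] = \sum_i(-1)^i[\res^G_Q C^i]$ in $K_0(\FF[Q])$, because each $C^i$ is a summand of a free $\FF[G]$-module and $[\res^G_Q\FF[G]] = [\FF[Q]]\cdot[\gr(\FF[P])]$. On the other hand, for any bounded complex of $\FF[Q]$-modules (here $\FF[Q]$ semisimple, so homology splits off) the Euler characteristic in $K_0(\FF[Q])$ equals that of its homology: $\sum_i(-1)^i[\res^G_Q C^i] = \sum_i(-1)^i[\res^G_Q H^i(C^*)] = \chi(\res^G_Q H^*(C^*))$. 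Stringing these equalities together yields the displayed identity. The grading bookkeeping here is routine once one is careful that $pr_*$ intertwines $K_0(pr_*)$ with restriction along the chosen splitting.

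For the second assertion, observe that $K_0(pr_*)$ is an isomorphism by \cref{lem:piindk0iso}, so the finiteness obstruction $\tilde\chi(C^*)\in\tilde K_0(\FF[G])$ vanishes if and only if $pr_*(\chi(C^*))$ lies in the subgroup of $K_0(\FF[Q])$ generated by $[\FF[Q]]$. Multiplying by the non-zero-divisor $[\gr(\FF[P])]$ is injective on $K_0(\FF[Q])$, so $pr_*(\chi(C^*))\in\ZZ\cdot[\FF[Q]]$ if and only if $pr_*(\chi(C^*))\cdot[\gr(\FF[P])]\in\ZZ\cdot[\gr(\FF[P])]\cdot[\FF[Q]]$, and the left-hand side is $\chi(\res^G_Q H^*(C^*))$ by the first part. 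This gives the stated criterion.

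The main obstacle I anticipate is justifying cleanly that passing to the associated graded does not change the class in $K_0$ and that, after restriction to the semisimple ring $\FF[Q]$, the Euler characteristic of a complex equals that of its homology; both are standard but require that $C^*$ become a complex of honest (projective) $\FF[Q]$-modules and that the module $[\res^G_Q\FF[G]]$ factor as a product in the ring $K_0(\FF[Q])$. Care is also needed that the conjugation $Q$-action on $\FF[P]$ used in defining $[\gr(\FF[P])]$ is exactly the one induced by the chosen splitting, so that $[\gr(\FF[P])] = [\res^G_Q\FF[P]]$ on the nose, as asserted just before the proposition.
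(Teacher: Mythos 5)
Your overall plan matches the paper's: choose a Schur--Zassenhaus splitting, filter, pass through semisimplicity of $\FF[Q]$ to replace the complex by its homology, and finish with \cref{lem:piindk0iso} and the non-zero-divisor observation. The second half (from ``$K_0(pr_*)$ is an isomorphism'' onward) is correct and is exactly what the paper does.

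The gap is in your justification of the identity $pr_*(\chi(C^*))\cdot[\gr(\FF[P])] = \sum_i(-1)^i[\res^G_Q C^i]$, which amounts to the claim that $[\res^G_Q M] = [M\otimes_{\FF[G]}\FF[Q]]\cdot[\gr(\FF[P])]$ for every finitely generated projective $\FF[G]$-module $M$. You argue this holds ``because each $C^i$ is a summand of a free $\FF[G]$-module and $[\res^G_Q\FF[G]]=[\FF[Q]]\cdot[\gr(\FF[P])]$''. That does not follow: both sides are group homomorphisms $K_0(\FF[G])\to K_0(\FF[Q])$, and checking agreement on the single class $[\FF[G]]$ says nothing about the other generators of $K_0(\FF[G])$, which in general has one basis element per indecomposable projective. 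If $M\oplus N\cong\FF[G]^n$ you only learn that the \emph{sums} of the two sides over $M$ and $N$ agree, not the individual terms. Worse, the check on $\FF[G]$ is vacuous here --- both sides equal $|P|\,[\FF[Q]]$ since $\res^G_Q\FF[G]$ is $\FF[Q]$-free of rank $|P|$ and $V\otimes_\FF\FF[Q]\cong\FF[Q]^{\dim V}$ for any finite-dimensional $Q$-representation $V$ --- so it cannot propagate any information to summands.

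The missing ingredient is the one the paper supplies: filter each $C^i$ over $\FF[P]$ (the filtration $F^jC^*$ from \cref{sec:spectralsequence_extensions}, or equivalently by powers of $I$ since $C^i$ is $\FF[P]$-free), and use the $Q$-equivariant isomorphism of the associated graded pieces $E_0^{L-s,t}\cong (C^t\otimes_{\FF[G]}\FF[Q])\otimes_\FF I^s/I^{s+1}$ from \cref{lem:QactonEL0} (built on \cref{lem:equivisos}). This is where $\FF[P]$ being local and $C^i$ being $\FF[P]$-free are actually used, and it is the step that makes the product with $[\gr(\FF[P])]$ appear term-by-term. Once you have that identification, your computation goes through exactly: $[\res^G_Q C^i] = \sum_s[E_0^{L-s,*}] = [C^i\otimes_{\FF[G]}\FF[Q]]\cdot[\gr(\FF[P])]$, and the rest of your argument is fine.
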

\begin{proof}
Let $C^*$ be perfect $\FF [G]$-cochain complex and choose a splitting $G=P\rtimes Q$. Since $\FF [Q]$ is semisimple, the Euler characteristic commutes with taking homology and in a short exact sequence the Euler characteristic of the middle term is the sum of the Euler characteristics of the other two terms. It follows that
\[\chi(\res_Q^G H^*(C^*))=\chi(\res_Q^G C^*)=\sum_i \chi (\res_Q^G (F^i C^*/F^{i-1} C^*))\]
for the filtration from \cref{sec:spectralsequence_extensions}. These filtration quotients are the columns $E_0^{i,*}$. By \cref{lem:QactonEL0} and the definition of the functor $K_0$, we obtain
\[\sum_i \chi (\res_Q^G (F^i C^*/F^{i-1} C^*)) = \chi(C^*\otimes_{\FF [G]} \FF [Q]) \cdot [\gr (\FF [P])] = pr_*(\chi(C^*)) \cdot  [\gr (\FF [P])].\]
It follows from \cref{lem:piindk0iso} that the finiteness obstruction $\tilde\chi (C^*)$ vanishes if and only if $pr_*\tilde\chi(C^*)=0$ in  $\tilde{K}_0(\FF [Q])$, i.e., if $pr_*(\chi(C^*))$ lies in the subgroup generated by $\FF [Q]$ of $K_0(\FF [Q])$. If $[\gr(\FF [P])]$ is not a zero-divisor, this holds if and only if $pr_*(\chi(C^*))\cdot [\gr(\FF [P])]=\chi(\res^G_Q H^*(C^*))$ lies in the additive subgroup generated by the product $[\gr(\FF [P])]\cdot [\FF[Q]]$. 
\end{proof}
We will use the following consequence.
\begin{lemma}\label{lem:vanishingofchi}
    Let $C^*$ be a perfect $\FF[G]$-cochain complex and assume that $p$ does not divide the order of $Q$. If $[\gr(\FF[P])]$ is not a zero-divisor in $K_0(\FF[Q])$ and  $\dim_\FF(H^*(C^*))<|G|$, then the following assertions are equivalent:
    \begin{enumerate}
        \item The cochain complex $C^*$ is homotopy equivalent to a finite, free $\FF[G]$-cochain complex;
        \item $\chi(\res^G_Q H^*(C^*))=0$ in $K_0(\FF[Q])$.
    \end{enumerate}
\end{lemma}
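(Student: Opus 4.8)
The plan is to derive \cref{lem:vanishingofchi} from \cref{prop:simplifyingchi} together with Ranicki's criterion and a short dimension count. First I would invoke \cite{ranicki1985} to record that assertion~(1) holds if and only if the finiteness obstruction $\tilde{\chi}(C^*)\in\tilde{K}_0(\FF[G])$ vanishes. Since $p$ does not divide $|Q|$ and $[\gr(\FF[P])]$ is not a zero-divisor in $K_0(\FF[Q])$, \cref{prop:simplifyingchi} applies and shows that $\tilde{\chi}(C^*)=0$ if and only if $\chi(\res^G_Q H^*(C^*))$ lies in the additive subgroup $\ZZ\cdot\bigl([\gr(\FF[P])]\cdot[\FF[Q]]\bigr)$ of $K_0(\FF[Q])$. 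Hence it remains only to show that, under the hypothesis $\dim_\FF H^*(C^*)<|G|$, membership in this subgroup is equivalent to $\chi(\res^G_Q H^*(C^*))=0$.

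One implication is trivial, as $0$ lies in every subgroup. For the converse I would apply the additive homomorphism $\dim_\FF\colon K_0(\FF[Q])\to\ZZ$, $[M]\mapsto\dim_\FF M$, to an equality $\chi(\res^G_Q H^*(C^*))=m\cdot[\gr(\FF[P])]\cdot[\FF[Q]]$ with $m\in\ZZ$. The left-hand side becomes the ordinary Euler characteristic $e\coloneqq\sum_i(-1)^i\dim_\FF H^i(C^*)$. The product $[\gr(\FF[P])]\cdot[\FF[Q]]$ is represented by $\gr(\FF[P])\otimes_\FF\FF[Q]$, so the right-hand side becomes $m\cdot|P|\cdot|Q|=m\,|G|$. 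Thus $e=m\,|G|$, while on the other hand $|e|\le\sum_i\dim_\FF H^i(C^*)=\dim_\FF H^*(C^*)<|G|$. Therefore $|m|\,|G|<|G|$, which forces $m=0$ and hence $\chi(\res^G_Q H^*(C^*))=0$.

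Stringing these steps together, (1) holds if and only if $\tilde{\chi}(C^*)=0$, if and only if $\chi(\res^G_Q H^*(C^*))\in\ZZ\cdot([\gr(\FF[P])]\cdot[\FF[Q]])$, if and only if $\chi(\res^G_Q H^*(C^*))=0$, which is~(2). I do not expect a genuine obstacle here: the substantive work has already been done in \cref{prop:simplifyingchi}, and the only points needing a moment's care are that $\chi(\res^G_Q H^*(C^*))$ is a well-defined element of $K_0(\FF[Q])$ because $H^*(C^*)$ has finite total $\FF$-dimension and $\FF[Q]$ is semisimple, and that $\dim_\FF$ sends the product class $[\gr(\FF[P])]\cdot[\FF[Q]]$ to $|P|\,|Q|=|G|$.
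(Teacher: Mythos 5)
Your proof is correct and takes essentially the same route as the paper: both invoke Ranicki's criterion, reduce via \cref{prop:simplifyingchi} to membership in the cyclic subgroup $\ZZ\cdot[\gr(\FF[P])]\cdot[\FF[Q]]$, and then apply the dimension homomorphism $\dim_\FF\colon K_0(\FF[Q])\to\ZZ$ together with the bound $\dim_\FF H^*(C^*)<|G|$ to force the multiplier to be zero.
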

\begin{proof}
    By \cref{prop:simplifyingchi} and the assumption on $[\gr(\FF[P])]$,
    the finiteness obstruction vanishes if and only if $\chi(\res^G_{Q} H^*(C^*))$ lies in the additive subgroup generated by $[\gr(\FF[P])]\cdot \FF[Q]$. 
   
    The ring homomorphism $\dim:K_0(\FF [Q])\to K_0(\FF)\cong \ZZ$ maps this cyclic subgroup bijectively to $|G|\ZZ$.
    We have $-|G|< \dim(\chi(\res^G_{C_3} H^*(C^*)))< |G|$ by assumption. Thus $\chi(\res^G_Q H^*(C^*))$ lies in that subgroup if and only if it is zero. 
\end{proof}

If the characteristic $p$ of $\FF$ does not divide the order of $Q$, then $K_0(\FF [Q])$ coincides with the representation ring and is additively isomorphic to $\ZZ^s$, where $s$ is the number of isomorphism classes of irreducible representations and the canonical generators are given by the classes of the irreducible representations.

We specialize to the case of $G=A_4=(\ZZ/2)^2\rtimes C_3$ with $Q=C_3$ acting on $P=\ZZ/2\times \ZZ/2$ nontrivially.

If $X^2+X+1$ does not have a zero in $\FF$, then the only two irreducible representations of $C_3$ are the trivial representation $\FF$ and the two-dimensional representation $V=\FF e_1 \oplus \FF e_2$, where a generator of $C_3$ acts by
$e_1\mapsto e_2$ and $e_2\mapsto e_1+e_2$.

The tensor product $V\otimes V$ decomposes as $V\otimes V \cong V\oplus \FF^2$ so that
\[K_0(\FF [C_3])\cong \ZZ[V]/(V^2-V-2)\]
as rings.

If $X^2+X+1$ has a root $\alpha$, then $X^3+1$ has the three distinct roots $1$, $\alpha$ and $\alpha^2=\alpha+1$ in $\FF$. For each of the roots, we get a one-dimensional representation, where a fixed generator of $C_3$ acts by multiplication with that root. We denote these three one-dimensional representations again by $1,\alpha,\alpha^2$. Since $\alpha\otimes_\FF \alpha \cong \alpha^2$ and $\alpha\otimes_\FF \alpha^2\cong 1$, we obtain 
\[K_0(\FF[C_3])\cong \IZ[\alpha]/(\alpha^3-1).\]

\begin{lemma} \label{lem:grFPnoZeroDivisor} The element $[\gr \FF[P]]$ is not a zero divisor in $K_0(\FF[C_3])$. More precisely:
\begin{enumerate}
    \item If $X^2+X+1$ does not have a zero in $\FF$, then \[[\gr (\FF[P])]= V+2 \text{ in } K_0(\FF[C_3])\cong \ZZ[V]/(V^2-V-2).\]
    \item 
    If $X^2+X+1$ has a zero in $\FF$, then \[[\gr (\FF[P])] = \alpha^2 + \alpha + 2 \text{ in } K_0(\FF[C_3])\cong \ZZ[\alpha]/(\alpha^3-1).\]
\end{enumerate}
\end{lemma}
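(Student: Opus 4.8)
The plan is to compute the class $[\gr \FF[P]]$ directly as a $C_3$-representation and then observe that the resulting element is a unit, hence not a zero divisor, in the (finite, free) ring $K_0(\FF[C_3])$. Recall from \cref{lem:qactsongrFP} that $\gr(\FF[P]) = \bigoplus_s I^s/I^{s+1}$ with $C_3$ acting by conjugation, and from \cref{sec:spectralsequence_elementaryab} that for $P = \ZZ/2\times\ZZ/2$ we have $I^2/I^3 \cong \FF\lambda_1\lambda_2$, $I/I^2 \cong \FF\lambda_1\oplus\FF\lambda_2$, and $\FF[P]/I \cong \FF$. So as a $C_3$-representation, $[\gr\FF[P]] = [\FF] + [I/I^2] + [I^2/I^3]$. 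The top piece $I^2/I^3 = \FF\lambda_1\lambda_2$ is one-dimensional, hence the trivial representation $\FF$ (a one-dimensional representation of $C_3$ in characteristic $2$ on which conjugation acts; since $\lambda_1\lambda_2$ spans the socle, which is $C_3$-stable, and the only action on a one-dimensional $\FF$-space fixing it up to scalar over $\FF_2$ is trivial — more conceptually, it is $\Lambda^2$ of $I/I^2$ and will be identified in the course of the computation). The middle piece $I/I^2$ is the standard two-dimensional $C_3$-representation coming from the nontrivial action on $P=(\ZZ/2)^2$: concretely, with $q(\lambda_1) = \lambda_2$, $q(\lambda_2)=\lambda_1+\lambda_2$ modulo $I^2$, which is exactly the representation $V$ of \cref{lem:grFPnoZeroDivisor}. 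And $\Lambda^2(I/I^2) \cong I^2/I^3$ is the determinant of $V$, which one computes to be trivial.

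Next I would split into the two cases of the statement. \textbf{Case 1: $X^2+X+1$ has no zero in $\FF$.} Then $I/I^2 = V$ is irreducible, $I^2/I^3 = \FF$ is trivial, and $\FF[P]/I = \FF$, so $[\gr\FF[P]] = 1 + V + 1 = V + 2$ in $K_0(\FF[C_3]) \cong \ZZ[V]/(V^2-V-2)$. To see this is not a zero divisor, note $\ZZ[V]/(V^2-V-2) \cong \ZZ[x]/(x^2-x-2)$, which is the ring of integers' quotient whose discriminant of $x^2-x-2$ is $9$; in fact $x^2 - x - 2 = (x-2)(x+1)$, so $K_0(\FF[C_3]) \hookrightarrow \ZZ\times\ZZ$ via $V \mapsto (2,-1)$ up to finite cokernel. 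Under this embedding $V+2 \mapsto (4, 1)$, both nonzero, so $V+2$ is a non-zero-divisor (any element mapping to a pair of nonzero integers in $\ZZ\times\ZZ$ is a non-zero-divisor, since $\ZZ\times\ZZ$ is reduced and the map is injective). \textbf{Case 2: $X^2+X+1$ has a root $\alpha$.} Then $V$ splits as $\alpha \oplus \alpha^2$, so $[I/I^2] = \alpha + \alpha^2$, and again $[\FF[P]/I] = 1$, $[I^2/I^3] = [\Lambda^2 V] = [\alpha\otimes\alpha^2] = [1] = 1$. Hence $[\gr\FF[P]] = 1 + \alpha + \alpha^2 + 1 = \alpha^2 + \alpha + 2$ in $K_0(\FF[C_3]) \cong \ZZ[\alpha]/(\alpha^3-1)$. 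To see this is a non-zero-divisor: $\ZZ[\alpha]/(\alpha^3-1) \cong \ZZ \times \ZZ[\omega]$ where $\omega$ is a primitive cube root of unity (via the CRT splitting $\alpha^3 - 1 = (\alpha-1)(\alpha^2+\alpha+1)$), and $\alpha^2+\alpha+2$ maps to $(4, 1)$ under this decomposition (using $\alpha^2+\alpha = -1$ in the second factor), with both coordinates nonzero in the respective integral domains, hence a non-zero-divisor.

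The main obstacle I anticipate is pinning down the $C_3$-action on the graded pieces precisely enough — in particular verifying that $I^2/I^3$ carries the \emph{trivial} action and that $I/I^2 \cong V$ as stated. For $I/I^2$ this is essentially the definition of the nontrivial $C_3$-action on $P = (\ZZ/2)^2$ passed to the group algebra: conjugation by $s(q)$ sends $p$ to $q(p)$, hence $\lambda_p = p-1$ to $q(p) - 1$, which modulo $I^2$ reads off the linear action, matching the matrix $\begin{psmallmatrix} 0 & 1 \\ 1 & 1\end{psmallmatrix}$ defining $V$. (Strictly one should note the $C_3$-action on $(\ZZ/2)^2$ realizing $A_4$ has order-$3$ image in $\GL_2(\FF_2) \cong S_3$, and any such is conjugate to this matrix.) For $I^2/I^3$ one uses that the multiplication map $\Lambda^2(I/I^2) \to I^2/I^3$ is a $C_3$-equivariant isomorphism of one-dimensional spaces, and $\det(V) = \Lambda^2 V$ is trivial because the defining matrix has determinant $1$ in $\FF_2$ (equivalently, in Case 2, $\alpha\cdot\alpha^2 = \alpha^3 = 1$). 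Everything else is the routine arithmetic in the two explicit commutative rings, which I would present compactly via the CRT/idempotent decompositions above rather than by hand.
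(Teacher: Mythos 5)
Your proposal is correct and follows essentially the same route as the paper: compute the $C_3$-action on the three graded pieces $\FF[P]/I$, $I/I^2$, $I^2/I^3$ (identifying the middle as $V$ and the outer two as trivial), then read off the class in $K_0(\FF[C_3])$ and check it is not a zero divisor via the factorizations $V^2-V-2=(V-2)(V+1)$ and $\alpha^3-1=(\alpha-1)(\alpha^2+\alpha+1)$. The only cosmetic differences are that you identify $I^2/I^3$ as $\Lambda^2 V$ with trivial determinant rather than checking the action on $(f_1-1)(f_2-1)$ directly, and that you recompute Case~2 from scratch where the paper deduces it from Case~1 by scalar extension along $K_0(\FF_2[C_3])\to K_0(\FF[C_3])$; also note that $\ZZ[\alpha]/(\alpha^3-1)\to\ZZ\times\ZZ[\omega]$ is an injection with finite cokernel, not an isomorphism, but injectivity is all your argument uses.
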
 
\begin{proof}
    Since $C_3$ acts nontrivially on $(\ZZ/2)^2$, there are generators $f_1,f_2$ of $(\ZZ/2)^2$ on which a generator $\tau\in C_3$ acts by $f_1\mapsto f_2$ and $f_2\mapsto f_1f_2$. 
    A basis for $I/I^2$ is given by $[f_1-1], [f_2-1]$. Conjugation by $\tau$ sends this basis to $[f_2-1]$ and \[[f_1f_2-1]=[f_1f_2-1] -[(f_1-1)(f_2-1)]=[f_1-1]+[f_2-1].\] Moreover, conjugation by $\tau$ is the identity on $\FF[P]/I$ and fixes the generator $(f_1-1)(f_2-1)$ of $I^2$.

    The first formula for $[\gr(\FF[P])]$ follows immediately. It is not a zero divisor since $V^2-V-2=(V-2)(V+1)$ in $\ZZ[V]$.
    
    If $X^2+X+1$ has a zero in $\FF$, then the ring homomorphism $K_0(\FF_2[C_3])\to K_0(\FF[C_3])$ sends $V$ to $\alpha + \alpha^2$ from which we deduce the second formula for $[\gr(\FF[P])]$. Since $\alpha^3-1=(\alpha-1)(\alpha^2+\alpha +1)$ in the polynomial ring $\ZZ[\alpha]$, the element $\alpha^2+\alpha+2$ is not a zero divisor in $K_0(\FF[C_3])$.
\end{proof}

\begin{theorem}\label{thm:finitefreecomplexes} Let $C^*$ be a perfect cochain complex over $\FF[A_4]$ with four-dimensional homology corresponding to the triple $(l,L,J)$ as in \cref{cor:classification_four_dim_perfect}.  

Then the following assertions are equivalent:
\begin{enumerate}
\item \label{thm:finitefreecomplexesi}
The cochain complex $C^*$ is homotopy equivalent to a finite, free $\FF[A_4]$-cochain complex;
\item \label{thm:finitefreecomplexesii} $\chi(\res^{A_4}_{C_3} H^*(C^*))=0$ in $K_0(\FF [C_3])$;
\item \label{thm:finitefreecomplexesiii} the graded representation $J/(x_1,x_2)J$ has a $C_3$-invariant basis element of even degree;
\item \label{thm:finitefreecomplexesiv} $J$ has a $C_3$-invariant parameter in even degree.
\end{enumerate}
\end{theorem}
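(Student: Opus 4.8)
The strategy is a cyclic chain of implications. The equivalence of \eqref{thm:finitefreecomplexesi} and \eqref{thm:finitefreecomplexesii} is essentially given by the machinery already developed: since $A_4 = (\ZZ/2)^2 \rtimes C_3$, the characteristic $p = 2$ does not divide $|C_3| = 3$, and $[\gr(\FF[P])]$ is not a zero-divisor in $K_0(\FF[C_3])$ by \cref{lem:grFPnoZeroDivisor}; moreover $\dim_\FF H^*(C^*) = 4 < 12 = |A_4|$. So \cref{lem:vanishingofchi} applies verbatim and gives the equivalence of the first two items.

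Next I would establish the equivalence of \eqref{thm:finitefreecomplexesiii} and \eqref{thm:finitefreecomplexesiv}. By \cref{lem:parameteridealFromPerfectComplex}\eqref{lem:parameteridealFromPerfectComplexii}, $J$ is generated by a regular sequence of two parameters, and $J/(x_1,x_2)J$ is the two-dimensional graded $C_3$-representation spanned by the classes of a minimal generating set. A $C_3$-invariant basis element of $J/(x_1,x_2)J$ of even degree lifts (by the graded Nakayama lemma, using that $C_3$ has odd order so that the projection $J \to J/(x_1,x_2)J$ has a $\FF[C_3]$-linear section, exactly as in the proof of \cref{lem:bijectionfourdimforM}) to a $C_3$-invariant parameter of $J$ in even degree; conversely a $C_3$-invariant parameter of $J$ in even degree projects to a nonzero $C_3$-invariant element of $J/(x_1,x_2)J$ of even degree, which—being part of a minimal generating set—can be completed to a basis. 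One must be slightly careful that the degree is even on both sides, but the projection $J\to J/(x_1,x_2)J$ preserves degree, so this is immediate.

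The heart of the argument is the equivalence of \eqref{thm:finitefreecomplexesii} and \eqref{thm:finitefreecomplexesiii}. By \cref{lem:parameteridealFromPerfectComplex}\eqref{lem:parameteridealFromPerfectComplex4} (or rather the ungraded consequence, together with \cref{lem:bijectionfourdim} which identifies $H^*(\Hom_\Lambda(\varepsilon^*,C)) \cong H^0 \otimes S/J$), the underlying $C_3$-representation of $H^*(C^*)$ is $L \otimes \Lambda(W)$ where $W = \Sigma^{-1}J/(x_1,x_2)J$ is two-dimensional with a basis in degrees $m, n$. Hence in $K_0(\FF[C_3])$,
\[
\chi(\res^{A_4}_{C_3} H^*(C^*)) = [L]\bigl([\Lambda^0 W] - [\Lambda^1 W] + [\Lambda^2 W]\bigr) = [L]\bigl(1 - [W] + \det W\bigr),
\]
using that $\Lambda^2 W = \det W$ is one-dimensional. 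Since $[L]$ is a unit in $K_0(\FF[C_3])$, the vanishing of $\chi$ is equivalent to $1 + \det W = [W]$ in $K_0(\FF[C_3])$. Now one computes directly in $K_0(\FF[C_3])$, splitting into the two cases of \cref{lem:grFPnoZeroDivisor}. If $X^2+X+1$ is irreducible over $\FF$, then $W$ is either $\FF^2$ (when both generators have the same parity and the action is forced to be trivial, as $\FF$ is the only one-dimensional representation) or the irreducible $V$; in the case $W = V$ one has $\det V = \FF$ and $1 + \det V = 2 \neq V = [W]$, while $W = \FF^2$ gives $1 + 1 = 2 = [W]$, and one checks that $W = \FF^2$ forces the generators to sit in degrees of the same parity, with the even-degree case corresponding exactly to a $C_3$-invariant even-degree basis element. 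If $X^2+X+1$ has a root, $W$ is a sum of two of the one-dimensional representations $1, \alpha, \alpha^2$, and $1 + \det W = [W]$ is a purely algebraic identity in $\ZZ[\alpha]/(\alpha^3-1)$ that holds precisely when one of the two summands is the trivial representation; tracing through the degree bookkeeping (the trivial summand is the $C_3$-invariant basis element, whose degree must be even for the identity to be about $H^*$ rather than a shift) yields \eqref{thm:finitefreecomplexesiii}.

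The main obstacle is this last case analysis: one must correctly match up, in each of the two representation-theoretic regimes for $\FF$, the $K_0$-identity $[W] = 1 + \det W$ with the combined condition "$W$ contains the trivial representation \emph{and} the corresponding basis element has even degree." The parity constraint is the subtle point, because $K_0$ alone forgets the grading; one recovers it by using that $t = m+n$ (\cref{lem:parameteridealFromPerfectComplex}\eqref{lem:parameteridealFromPerfectComplexv}), so the four homology degrees are $0, m, n, m+n$, and examining which of these can carry a trivial $C_3$-summand. I expect the bulk of the write-up to be this careful cross-referencing of the cases in \cref{lem:grFPnoZeroDivisor} with the structure of $W$, while the remaining implications are short.
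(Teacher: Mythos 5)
Your plan for the equivalences \eqref{thm:finitefreecomplexesi}$\Leftrightarrow$\eqref{thm:finitefreecomplexesii} and \eqref{thm:finitefreecomplexesiii}$\Leftrightarrow$\eqref{thm:finitefreecomplexesiv} is correct and matches the paper's arguments (the former via \cref{lem:vanishingofchi} and \cref{lem:grFPnoZeroDivisor}, the latter via the Reynolds operator).

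However, your step \eqref{thm:finitefreecomplexesii}$\Leftrightarrow$\eqref{thm:finitefreecomplexesiii} contains a genuine gap. You write
\[
\chi(\res^{A_4}_{C_3} H^*(C^*)) = [L]\bigl([\Lambda^0 W] - [\Lambda^1 W] + [\Lambda^2 W]\bigr) = [L]\bigl(1 - [W] + \det W\bigr),
\]
assigning signs by the exterior degree of $\Lambda(W)$. But the Euler characteristic $\chi(\res^{A_4}_{C_3} H^*(C^*))=\sum_i(-1)^i[H^i(C^*)]$ is governed by the cohomological degrees $0, m, n, m+n$ in which the four classes live, not by the exterior degree. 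Writing $J/(x_1,x_2)J \cong L_1\oplus L_2$ with $L_i$ in degree $m_i+1$ (when decomposable), the correct identity is
\[
\chi = [L]\bigl(1 + (-1)^{m_1}[L_1]\bigr)\bigl(1 + (-1)^{m_2}[L_2]\bigr),
\]
and in the indecomposable case $\chi = [L]\,(2 + (-1)^m[V])$. Your formula coincides with the correct one only when $m_1$ and $m_2$ are both odd. This is not a matter of recovering a parity constraint afterward: the sign dependence is part of the formula itself, and your version gives wrong answers. Concretely, if both $L_i$ are trivial and both $m_i$ are even, then $\chi = [L]\cdot 4 \neq 0$, so $C^*$ is \emph{not} homotopy equivalent to a finite free complex, yet $1 - [W] + \det W = (1-1)(1-1) = 0$, so your criterion would falsely claim it is. The assertion that ``$W = \FF^2$ forces the generators to sit in degrees of the same parity'' is also unfounded — $W\cong\FF^2$ is a purely representation-theoretic condition and imposes no constraint on the internal grading. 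You need to restart the computation of $\chi$ keeping the signs $(-1)^{m_i}$ from the outset, and then observe that a factor $1 + (-1)^{m_i}[L_i]$ vanishes exactly when $m_i$ is odd \emph{and} $L_i$ is trivial, which translates to $J/(x_1,x_2)J$ having a $C_3$-invariant generator in even degree $m_i+1$.
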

\begin{proof} The first two assertions are equivalent by \cref{lem:vanishingofchi} and \cref{lem:grFPnoZeroDivisor}.

 After shifting, we may assume that the homology of $C^*$ has a basis with elements in degrees $0\leq m_1\leq m_2\leq m_1+m_2$.   Since $\FF[C_3]$ is semisimple, we obtain $\chi(\res^{A_4}_{C_3} H^*(C^*)) = \chi(\res^{A_4}_{C_3} \gr H^*(C^*))$. Since one-dimensional representations are invertible in $K_0$, it follows from \cref{lem:parameteridealFromPerfectComplex}\eqref{lem:parameteridealFromPerfectComplex4} that $\chi(\res^{A_4}_{C_3} \gr H^*(C^*))=0$ if and only if $\chi(\Lambda (\Sigma^{-1}  J/(x_1,x_2)J))=0$.

   We show that $\chi(\Lambda (\Sigma^{-1}  J/(x_1,x_2)J))=0$ if and only if $J/(x_1,x_2)J$ has a trivial one-dimensional subrepresentation of even degree. We distinguish two cases.
   
   First, assume that $J/(x_1,x_2)J$ is the sum of two one-dimensional graded, representations $L_1$ in degree $m_1+1$ and $L_2$ in degree $m_2+1$. We have
   \[\chi(\Lambda (\Sigma^{-1}  J/(x_1,x_2)J))=(1+(-1)^{m_1}[L_1])(1+(-1)^{m_2}[L_2].\]
    Obviously this vanishes if one of the factors is zero, i.e., one $m_i$ is odd and $L_i$ is the trivial representation. We show that it is nonzero otherwise. Neither $2$ nor (if available) the elements $1+\alpha$, $1+\alpha^2$ are zero-divisors in $K_0(\FF[C_3])$, since
    $(1+\alpha)(1+\alpha^2) = 2+\alpha+\alpha^2$ which is not a zero-divisor by \cref{lem:grFPnoZeroDivisor}. 
    Thus we may assume that both $m_i$ are odd and both $L_i$ are nontrivial representations.
    
    Then $X^2+X+1$ has a zero in $\FF$, since otherwise all one-dimensional representations are trivial. Thus $L_i\in \{\alpha,\alpha^2\}$. A case distinction yields:
    \begin{align*}
    (1-\alpha)(1-\alpha) &= 1-2\alpha+\alpha^2\neq 0 \\
    (1-\alpha)(1-\alpha^2)&=2-\alpha-\alpha^2\neq 0\\
    (1-\alpha^2)(1-\alpha^2) &= 1+\alpha-2\alpha^2\neq 0
    \end{align*}
    
    Secondly, assume that $J/(x_1,x_2)J$ is not the sum of two one-dimensional representations, i.e., a two-dimensional irreducible representation in one single degree $m+1$. In that case \eqref{thm:finitefreecomplexesiii} does not hold. Note that $X^2+X+1$ does not have a zero in $\FF$, and we have 
    \[\chi(\Lambda (\Sigma^{-1}  J/(x_1,x_2)J))=2+(-1)^mV\neq 0 \in K_0(\FF [C_3]). \]

    Finally, we show that \eqref{thm:finitefreecomplexesiii} is equivalent to \eqref{thm:finitefreecomplexesiv}. A $C_3$-invariant parameter in $J$ generates a one-dimensional trivial subrepresentation of $J/(x_1,x_2)J$. Conversely, applying the Reynolds operator $\mathcal R(z)=1/3\sum_{q \in C_3} qz$ to a representative of a generator of a $C_3$-invariant subspace of $J/(x_1,x_2)J$ yields a $C_3$-invariant parameter. 
\end{proof}
For $\FF=\FF_2$ we can read off from the homology of $C^*$ whether it is homotopy equivalent to a finite, free one:
\begin{corollary}\label{cor:finitefreecomplexesforF2}
 A perfect $\FF_2[A_4]$-cochain complex $C^*$ such that its homology is four-dimensional with basis elements in degrees $0\leq m\leq n\leq t$ is homotopy equivalent to a finite, free cochain complex if and only if $m$ or $n$ is odd and $H^*(C^*)$ is a trivial $C_3$-representation.
\end{corollary}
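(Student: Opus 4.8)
The plan is to specialize \cref{thm:finitefreecomplexes} to the ground field $\FF=\FF_2$. First I would record the needed facts about $C_3$-representations over $\FF_2$: the polynomial $X^2+X+1$ is irreducible over $\FF_2$, so the only one-dimensional $C_3$-representation is the trivial one, the only irreducibles are $\FF_2$ and the two-dimensional representation $V$, and $\FF_2[C_3]$ is semisimple; in particular every two-dimensional $C_3$-representation is isomorphic to $\FF_2\oplus\FF_2$ or to $V$. After a shift we may assume $l=0$, so that $H^*(C^*)$ has a homogeneous basis in degrees $0\le m\le n\le m+n$. Writing $W\coloneqq J/(x_1,x_2)J$, \cref{lem:parameteridealFromPerfectComplex} gives that $W$ is a two-dimensional graded $C_3$-representation with generators in degrees $m+1$ and $n+1$, and \cref{lem:parameteridealFromPerfectComplex}\eqref{lem:parameteridealFromPerfectComplex4} gives an isomorphism $\gr H^*(C^*)\cong L\otimes_\FF\Lambda(\Sigma^{-1}W)$ of graded $\FF_2[C_3]$-modules, with $L$ one-dimensional and hence trivial.

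Next I would pin down when $H^*(C^*)$ is a trivial $C_3$-representation. Since $\FF_2[C_3]$ is semisimple, the filtration on $H^*(C^*)$ splits, so $H^*(C^*)\cong\gr H^*(C^*)$ as $C_3$-representations; combined with the previous paragraph and the triviality of $L$ this gives $H^*(C^*)\cong\Lambda(W)=\FF_2\oplus W\oplus\Lambda^2W$ as $C_3$-representations. The outer two summands are one-dimensional, hence trivial, so $H^*(C^*)$ is a trivial $C_3$-representation if and only if $W$ is, i.e.\ (by the classification of two-dimensional $C_3$-representations over $\FF_2$) if and only if $W\not\cong V$.

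Finally I would invoke \cref{thm:finitefreecomplexes}\eqref{thm:finitefreecomplexesiii}: $C^*$ is homotopy equivalent to a finite, free complex if and only if $W$ has a $C_3$-invariant basis element of even degree. If $W\cong V$, then $W$ has no nonzero $C_3$-invariant element, so the criterion fails; consistently, $H^*(C^*)$ is then nontrivial, so the asserted condition fails too (and in this case $m=n$, since a graded copy of the irreducible $V$ cannot have generators in two distinct degrees). If $W\cong\FF_2\oplus\FF_2$, every basis element is $C_3$-invariant, so the criterion holds exactly when one of the generator degrees $m+1$, $n+1$ is even, i.e.\ when $m$ or $n$ is odd; and $H^*(C^*)$ is trivial in this case. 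In both cases \cref{thm:finitefreecomplexes}\eqref{thm:finitefreecomplexesiii} is equivalent to the conjunction ``$m$ or $n$ is odd and $H^*(C^*)$ is a trivial $C_3$-representation'', as claimed. I do not expect a genuine obstacle: the only points deserving a little care are the identification $H^*(C^*)\cong\gr H^*(C^*)$ as $C_3$-representations and the observation that \eqref{thm:finitefreecomplexesiii} demands a basis element that is simultaneously $C_3$-invariant and of even degree, which is exactly what forces the split into the cases $W\cong V$ and $W\cong\FF_2\oplus\FF_2$.
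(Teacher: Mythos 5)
Your proof is correct and follows essentially the same route as the paper: specialize \cref{thm:finitefreecomplexes} to $\FF_2$ and use that the only one-dimensional $C_3$-representation is trivial. The one notable difference is that the paper's proof of the converse only cites \cref{thm:finitefreecomplexes}\eqref{thm:finitefreecomplexesiv} and \cref{lem:parameteridealFromPerfectComplex}\eqref{lem:parameteridealFromPerfectComplex4} to conclude that $m$ or $n$ is odd, leaving implicit why $H^*(C^*)$ is then a trivial $C_3$-representation; you fill that gap explicitly by observing that a $C_3$-invariant basis element of $J/(x_1,x_2)J$ forces $J/(x_1,x_2)J\cong\FF_2\oplus\FF_2$ (as its complement is one-dimensional, hence trivial), and then $\gr H^*(C^*)\cong H^*(C^*)$ is trivial by \cref{lem:parameteridealFromPerfectComplex}\eqref{lem:parameteridealFromPerfectComplex4} and semisimplicity of $\FF_2[C_3]$. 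You also use criterion \eqref{thm:finitefreecomplexesiii} throughout where the paper splits the two directions between \eqref{thm:finitefreecomplexesii} and \eqref{thm:finitefreecomplexesiv}, but these are equivalent, so this is a stylistic rather than a substantive difference.
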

\begin{proof}
Over $\FF_2$ all one-dimensional $\FF_2[C_3]$-representations are trivial. Moreover, recall that $t=m+n$ by \cref{lem:parameteridealFromPerfectComplex}\eqref{lem:parameteridealFromPerfectComplexv}. It follows immediately from \cref{thm:finitefreecomplexes}\eqref{thm:finitefreecomplexesii} that $C^*$ is homotopy equivalent to a finite, free cochain complex if $m$ or $n$ is odd and $C_3$ acts trivially on $H^*(C^*)$.

Conversely, if $C^*$ is homotopy equivalent to a finite, free cochain complex, then $m$ or $n$ is odd by \cref{thm:finitefreecomplexes}\eqref{thm:finitefreecomplexesiv} and \cref{lem:parameteridealFromPerfectComplex}\eqref{lem:parameteridealFromPerfectComplex4}. 
\end{proof}

\cref{cor:finitefreecomplexesforF2} is an algebraic result with a topological application. The following argument is our original proof of \cite[Theorem~7.1]{ruepingstephanyalcin2022}.
\begin{remark}  In \cite[Theorem~2]{oliver1979}, Oliver proved that $A_4$ can not act freely on a finite CW complex $X$ with cohomology ring $H^*(X;\ZZ)\cong H^*(S^n\times S^n;\ZZ)$. The statement of \cite[Theorem~7.1]{ruepingstephanyalcin2022} is that the assumption on the cohomology ring can be weakened to $\FF_2$-coefficients. Indeed, by \cite[Theorem~1]{oliver1979}, it suffices to show that $A_4$ can not act freely on a finite $CW$-complex $X$ with cohomology ring $H^*(X;\FF_2)\cong H^*(S^n\times S^n;\FF_2)$ on which $A_4$ acts nontrivially. This holds by \cref{thm:finitefreecomplexes} applied to the cellular cochain complex of $X$.
\end{remark}

\section{A topological application to free \texorpdfstring{$A_4$}{A4}-actions}
We will show in \cref{thm:rigidity} that cochain complexes of finite, free $A_4$-CW complexes with four-dimensional cohomology are rigid. They are determined by the degrees of the nonzero cohomology groups. 

We begin with a comparison of the topological and algebraic Borel construction.
For any short exact sequence of finite groups
\[1\to N\to G\to Q\to 1
\]
and field $k$, we have equipped the group cohomology $H^*(BN;k)$ with a $Q$-action. More generally, $\Ext^*_{k[N]}(k,D^*)$ has a $Q$-action for any cochain complex $D^*$ of right $k[G]$-modules by \cref{lem:equivariantactionOfExt}. For the cochains $C^*=C^*(X;k)$ on a left $G$-CW complex $X$, this can be modeled topologically. The quotient of the universal left $G$-space $EG$ by $N$ is a model for $BN$ on which $Q$ acts. More generally, for the left $G$-space $X$, the Borel construction $(X)_{hN}=EG\times_N X$ inherits a left $Q$-action defined by $q[e,x]\coloneqq [ge,gx]$ for $q=Ng=gN$.

Let $\varepsilon^*(G)$ be a projective resolution of $k$ over $k[G]$. Then $\Hom_{k[N]}(\varepsilon^*(G), D^*)$ is the algebraic Borel construction from \cite[Section~1.2]{alldaypuppe1993} extended to extensions. To connect to the topological Borel construction, note that for $C^*=\Hom_k(C_*,k)$, i.e., the cochain complex of the singular chain complex $C_*=C_*(X;k)$, we have:
\[\Hom_{k[N]}(\varepsilon^*(G),\Hom_\FF(C_*,k))\cong \Hom_k (\varepsilon^*(G)\otimes_{k[N]} C_*,k)
\]

In \cref{lem:equivariantactionOfExt}, we have equipped $\Ext_{k[N]}^*(k, D^*)$ with a $Q$-equivariant action by the group cohomology $\Ext_{k[N]}^*(k,k)=H^*(BN;k)$ using Yoneda composition. Alternatively, the action can be defined with cup products using a diagonal approximation as in \cite{alldaypuppe1993}; see \cite[Lemma~3.2.3]{benson1991I}. We used Yoneda composition since this description makes clear that the action only depends on the structure of $k[N]$ as an augmented $k$-algebra and thus does not depend on the comultiplication of the Hopf algebra structure.

If the $N$-action on $X$ is free, then $H^*(X/N;k)\cong H^*(EG\times_N X; k)$ and the $H^*(BN;k)$-action agrees with the action induced by $H^*(-;k)$ applied to the classifying map $f\colon X/N\to BN$. 

\begin{lemma}\label{lem:idealissteenrodclosed}
Let $X$ be a finite $A_4$-CW complex with four-dimensional cohomology $H^*(X;\FF_2)$ such that the restriction of the $A_4$-action to $P=\ZZ/2\times \ZZ/2$ is free. Then the kernel of $H^*(BP;\FF_2)\to H^*(X/P;\FF_2)$ is a Steenrod closed, $C_3$-invariant parameter ideal.
\end{lemma}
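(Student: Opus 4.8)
The plan is to identify the ideal in the statement with the parameter ideal produced by \cref{lem:parameteridealFromPerfectComplex} and then to read off its remaining properties. Let $C^*=C^*(X;\FF_2)$ be the cellular cochain complex of $X$. As the $A_4$-CW complex $X$ is finite and its restricted $P$-action is free, every cell of $X$ has isotropy of order prime to $2$; hence $C^*$ is a bounded complex of finitely generated $\FF_2[A_4]$-modules whose restriction to $\FF_2[P]$ is free, and it is a perfect $\FF_2[A_4]$-cochain complex. Its total homology is $H^*(X;\FF_2)$, which is four-dimensional, and since $X$ is nonempty $H^0(X;\FF_2)\neq 0$, so the homology has a homogeneous basis in degrees $0\leq m\leq n\leq t$. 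Applying \cref{lem:parameteridealFromPerfectComplex} to the extension $1\to P\to A_4\to C_3\to 1$, the annihilator ideal $J\subset H^*(BP;\FF_2)$ of the graded module $\Ext^*_{\FF_2[P]}(\FF_2,C^*)$ is generated by a regular sequence of two parameters by \cref{lem:parameteridealFromPerfectComplex}\eqref{lem:parameteridealFromPerfectComplexii}, and is therefore a parameter ideal. Since the $H^*(BP;\FF_2)$-action on $\Ext^*_{\FF_2[P]}(\FF_2,C^*)$ is $C_3$-equivariant by \cref{lem:equivariantactionOfExt}, with $C_3$ acting on $H^*(BP;\FF_2)$ by conjugation, the ideal $J$ is $C_3$-invariant.

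Next I would pass to topology. By the discussion preceding this lemma comparing the algebraic and topological Borel constructions, freeness of the $P$-action on $X$ gives an isomorphism of graded $H^*(BP;\FF_2)$-modules
\[
\Ext^*_{\FF_2[P]}(\FF_2,C^*)\;\cong\;H^*(X/P;\FF_2),
\]
under which the module structure on the right-hand side is the one induced by the classifying map $f\colon X/P\to BP$. Hence $J$ is the annihilator of $H^*(X/P;\FF_2)$ as a module over $H^*(BP;\FF_2)$. Because $X$ is nonempty, $1\in H^0(X/P;\FF_2)$ is nonzero and $f^*(1)=1$, so an element $a\in H^*(BP;\FF_2)$ annihilates $H^*(X/P;\FF_2)$ precisely when $f^*(a)=f^*(a)\cdot 1=0$. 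Thus $J=\ker\bigl(f^*\colon H^*(BP;\FF_2)\to H^*(X/P;\FF_2)\bigr)$, which is exactly the ideal appearing in the statement.

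It remains to see that $J$ is Steenrod closed, and here I would simply use naturality of the Steenrod operations: $f^*$ is induced by a continuous map, hence commutes with each $\Sq^i$, so $a\in J=\ker f^*$ implies $f^*(\Sq^i a)=\Sq^i f^*(a)=0$, i.e.\ $\Sq^i a\in J$. Combining the three steps completes the proof. The substantive input is entirely \cref{lem:parameteridealFromPerfectComplex} together with the identification $\Ext^*_{\FF_2[P]}(\FF_2,C^*)\cong H^*(X/P;\FF_2)$ as $H^*(BP;\FF_2)$-modules; once these are available, both the $C_3$-invariance and the Steenrod closedness are formal. The only small point to watch is the passage from ``annihilator of the module'' to ``kernel of the ring map'', for which one uses the unit $1\in H^0(X/P;\FF_2)$; I do not expect a genuine obstacle.
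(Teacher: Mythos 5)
Your proof is correct and follows essentially the same route as the paper's: identify the ideal in question with the annihilator of $\Ext^*_{\FF_2[P]}(\FF_2, C^*(X;\FF_2))$, invoke \cref{lem:parameteridealFromPerfectComplex} and \cref{lem:equivariantactionOfExt} for the parameter-ideal and $C_3$-invariance claims, translate to the kernel of $f^*\colon H^*(BP;\FF_2)\to H^*(X/P;\FF_2)$, and use naturality of the Steenrod squares. You supply slightly more detail than the paper at two points — the verification that $C^*(X;\FF_2)$ is a perfect $\FF_2[A_4]$-complex (via odd-order isotropy), and the justification that the annihilator of $H^*(X/P)$ as an $H^*(BP)$-module equals $\ker f^*$ (via the unit $1\in H^0(X/P)$) — but these are exactly the implicit steps in the paper's argument, not a different method.
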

\begin{proof}
    Let $C^*=C^*(X;\FF_2)$ be the cellular cochain complex of $X$ and $J$ the annihilator ideal of $\Ext^*_{\FF_2[P]}(\FF_2, C^*)$. Then $J$ is $C_3$-invariant by \cref{lem:equivariantactionOfExt} and a parameter ideal by \cref{lem:parameteridealFromPerfectComplex}.  Since the $H^*(BP)$-module structure of $H^*(X/P)\cong \Ext_{\FF[P]}^*(\FF, C^*)$ is induced by a ring homomorphism $H^*(BP)\to H^*(X/P)$, the annihilator ideal $J$ is the kernel of this homomorphism. The ring homomorphism is induced by a map of spaces and thus commutes with Steenrod operations. Hence $J$ is closed under Steenrod operations.
\end{proof}

\begin{theorem}\label{thm:rigidity} If there exists a finite, free $A_4$-CW complex $X$ with four-dimensional total cohomology $H^*(X;\FF_2)$ with a basis in degrees $0\leq m\leq n\leq t$, then its cellular cochain complex is determined by $m$ and $n$ up to homotopy. 
\end{theorem}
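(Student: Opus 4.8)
The plan is to reduce the homotopy type of the cellular cochain complex $C^{*}:=C^{*}(X;\FF_2)$ to the datum of a single ideal in $H^{*}(BP;\FF_2)$, where $P=\ZZ/2\times\ZZ/2$, and then invoke the uniqueness of that ideal proved in \cite{ruepingstephanyalcin2022}. Since $X$ is a finite, free $A_4$-CW complex, $C^{*}$ is a bounded complex of finitely generated free $\FF_2[A_4]$-modules, hence a perfect cochain complex with four-dimensional total homology $H^{*}(X;\FF_2)$ with a basis in degrees $0\le m\le n\le t$; for such complexes quasi-isomorphism and chain homotopy equivalence coincide. First I would apply \cref{cor:classification_four_dim_perfect} (with $Q=C_3$ acting on $P$ as in $A_4=P\rtimes C_3$) to write $C^{*}$, up to homotopy, as the complex corresponding to a triple $(l,L,J)$. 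The integer $l$ equals $0$ because $H^{0}(X;\FF_2)\neq 0$, and $L$ is forced to be the trivial $C_3$-representation, since over $\FF_2$ the polynomial $X^{2}+X+1$ is irreducible and so the trivial representation is the only one-dimensional one. Hence the homotopy type of $C^{*}$ depends only on the $C_3$-invariant parameter ideal $J\subseteq H^{*}(BP;\FF_2)=\FF_2[x_1,x_2]$, whose parameters have degrees $m+1$ and $n+1$ by \cref{lem:parameteridealFromPerfectComplex} (which also gives $t=m+n$), and which, by the argument in the proof of \cref{lem:idealissteenrodclosed}, is the kernel of the map $H^{*}(BP;\FF_2)\to H^{*}(X/P;\FF_2)$ induced by a classifying map, hence closed under Steenrod operations.

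Next I would transport $J$ to $H^{*}(BA_4;\FF_2)=H^{*}(BP;\FF_2)^{C_3}$, where the uniqueness statement of \cite{ruepingstephanyalcin2022} is formulated. The graded $C_3$-representation $J/(x_1,x_2)J$ is two-dimensional with generators in degrees $m+1$ and $n+1$. If $m<n$, these degrees differ, so $J/(x_1,x_2)J$ cannot be the two-dimensional irreducible $\FF_2[C_3]$-module (which lives in a single internal degree) and must split as a sum of two trivial one-dimensional representations. The remaining case $m=n$ is excluded: then $H^{*}(X;\FF_2)\cong H^{*}(S^{m}\times S^{m};\FF_2)$ as graded rings, using that the product of the two middle classes is the top class by \cref{rem:43forSpaces}, and this contradicts \cite[Theorem~7.1]{ruepingstephanyalcin2022}. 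So $J/(x_1,x_2)J$ is $C_3$-trivial, and applying the Reynolds operator $\mathcal{R}(z)=\tfrac{1}{3}\sum_{q\in C_3}qz$ to lifts of a homogeneous basis, as in the proof of \cref{thm:finitefreecomplexes}, produces a regular sequence $a,b$ generating $J$ with $a,b\in H^{*}(BA_4;\FF_2)$ of degrees $m+1$ and $n+1$. I would then record the routine facts that $a,b$ remains a regular sequence in the Cohen--Macaulay ring $H^{*}(BA_4;\FF_2)$, that $\bar J:=aH^{*}(BA_4;\FF_2)+bH^{*}(BA_4;\FF_2)$ equals $J\cap H^{*}(BA_4;\FF_2)$ (again via $\mathcal{R}$) and is therefore a Steenrod-closed parameter ideal of $H^{*}(BA_4;\FF_2)$ with parameters in degrees $m+1,n+1$, and that $\bar J\cdot H^{*}(BP;\FF_2)=J$.

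Finally, by \cite[Corollary~6.13]{ruepingstephanyalcin2022} there is at most one Steenrod-closed parameter ideal of $H^{*}(BA_4;\FF_2)$ with parameters of degrees $m+1$ and $n+1$; hence $\bar J$ is determined by $m$ and $n$, and so is $J=\bar J\cdot H^{*}(BP;\FF_2)$. Together with the first paragraph this shows that the homotopy type of $C^{*}(X;\FF_2)$ is determined by $m$ and $n$, as claimed. The step I expect to be the main obstacle is precisely the passage between $J$ and $\bar J$: extension--contraction of ideals along $H^{*}(BA_4;\FF_2)\hookrightarrow H^{*}(BP;\FF_2)$ is \emph{not} a bijection on arbitrary $C_3$-stable ideals, so one genuinely has to exploit the triviality of $J/(x_1,x_2)J$ (and hence first dispose of the equidimensional case) in order to realize $J$ as the extension of an ideal of $H^{*}(BA_4;\FF_2)$ to which the classification of \cite{ruepingstephanyalcin2022} applies. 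Everything else is either a direct citation of the results above or a short commutative-algebra verification.
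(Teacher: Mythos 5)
Your proposal is correct in outline and matches the paper's strategy for most steps: reduce to a triple $(l,L,J)$ via \cref{cor:classification_four_dim_perfect} with $l=0$ and $L$ trivial over $\FF_2$; show $J/(x_1,x_2)J$ is a trivial $C_3$-representation; use the Reynolds operator to produce two $C_3$-invariant parameters; pass to the contracted ideal in $H^*(BA_4;\FF_2)$, which is a Steenrod closed parameter ideal by \cref{lem:idealissteenrodclosed} and \cite[Lemma~2.8]{ruepingstephanyalcin2022}; conclude by the uniqueness statement \cite[Corollary~6.13]{ruepingstephanyalcin2022}. You also correctly identify the contraction step as the crux.

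Where you diverge from the paper is in how you prove triviality of $J/(x_1,x_2)J$. The paper observes that the cellular cochain complex of a finite, free $A_4$-CW complex is already a finite, free $\FF_2[A_4]$-complex, so \cref{thm:finitefreecomplexes} applies immediately and produces a $C_3$-invariant parameter; since $\FF_2[C_3]$ is semisimple with only the trivial and one two-dimensional irreducible, a fixed vector forces the whole two-dimensional representation to be trivial, regardless of whether $m=n$. You instead run a case analysis: for $m<n$ the internal-degree argument works fine and is clean, but for $m=n$ you route through the topological obstruction \cite[Theorem~7.1]{ruepingstephanyalcin2022}. That route is valid in substance but has two drawbacks. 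First, there is a small elision: ``the product of the two middle classes is the top class'' by itself does not pin down the ring $H^*(X;\FF_2)$; one also needs $a^2=b^2=0$ for a degree-$m$ basis $a,b$, which does follow from \cref{rem:43forSpaces} combined with the observation that $a^2,b^2$ lie in $F^1H^{2m}(X)=0$ in the spectral-sequence filtration, but you should say so. Second, as the paper points out in the remark following \cref{cor:finitefreecomplexesforF2}, the $\FF_2$-coefficient version of Oliver's theorem is itself derivable from \cref{thm:finitefreecomplexes}, so your detour repackages the same input with extra topology. The paper's direct invocation of \cref{thm:finitefreecomplexes} is both shorter and degree-case-free, and I'd recommend adopting it; everything after that point in your write-up agrees with the paper.
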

\begin{proof}
    By \cref{cor:classification_four_dim_perfect} for $\FF=\FF_2$, the cellular cochain complex $C^*\coloneqq C^*(X;\FF_2)$ is determined by just its corresponding parameter ideal $J\subset H^*(BP)$ for $P=\ZZ/2\times \ZZ/2$
    since for spaces the lowest degree $l$ with nonzero homology is $l=0$ and every one-dimensional $C_3$-representation over $\FF_2$ is trivial.
    
    Since $C^*(X;\FF_2)$ is a finite, free $\FF_2[A_4]$-cochain complex, the parameter ideal $J$ has a $C_3$-invariant parameter $x$ by \cref{thm:finitefreecomplexes}. Over $\FF_2$, the graded two-dimensional representation $J/(x_1,x_2)J$ has to be trivial. Thus we can also find a second $C_3$-invariant parameter $y$ using the Reynolds operator (as in the proof of \cref{thm:finitefreecomplexes}).
    Let $J'\subset H^*(BP)^{C_3}$ be the ideal generated by $x,y\in  H^*(BP)^{C_3}=H^*(BA_4)$. In particular, $J$ is the extension of $J'$ to $H^*(BP)$.
    Then $J'$ is a parameter ideal; see \cite[Lemma~2.3]{ruepingstephanyalcin2022}. The ideal $J$ is Steenrod closed by \cref{lem:idealissteenrodclosed} and so is $J'$ by \cite[Lemma~2.8]{ruepingstephanyalcin2022}.
    
Finally, \cite[Corollary~6.13]{ruepingstephanyalcin2022} states that there is at most one Steenrod closed parameter ideal in $H^*(BA_4)$ with parameters in degrees $m+1,n+1$.
\end{proof}
\cref{thm:rigidity} does not hold for $P=\ZZ/2\times \ZZ/2$ instead of $A_4$.
\begin{example} Let $P=\ZZ/2\times \ZZ/2$. Consider the product action of the antipodal actions on $X=S^2\times S^3$. Then the cohomology ring of the quotient is\[H^*(\RR P^2\times \RR P^3; \FF_2)\cong \FF_2[x_1, x_2]/( x_1^3, x_2^4).\] Thus the corresponding parameter ideal of the cellular cochain complex of $X$ is $( x_1^3, x_2^4)$. On the other hand, Oliver \cite{oliver1979} constructed a free $A_4$-action on $S^2\times S^3$. The only Steenrod closed parameter ideal in $H^*(BA_4;\FF_2)$ with parameters of degrees $3$ and $4$ is $(x_1x_2(x_1+x_2),x_1^4+(x_1x_2)^2+x_2^4)$. Its extension to $\FF_2[x_1,x_2]$ classifies the cochain complex of the restriction from $A_4$ to $\ZZ/2\times \ZZ/2$. Since the two ideals are different, the cellular cochain complexes of the two $P$-actions are not homotopy equivalent. Note that the product action can not be extended to an $A_4$-action since the corresponding parameter ideal is not $C_3$-invariant.
\end{example}
We do not know if rigidity holds topologically.
\begin{question}
Given two finite, free $A_4$-CW complexes homotopy equivalent to $S^m\times S^n$. Are they $A_4$-equivariantly homotopy equivalent?
\end{question}

%\bibliographystyle{amsalpha}
%\bibliography{bibl}

\providecommand{\bysame}{\leavevmode\hbox to3em{\hrulefill}\thinspace}
\providecommand{\MR}{\relax\ifhmode\unskip\space\fi MR }
% \MRhref is called by the amsart/book/proc definition of \MR.
\providecommand{\MRhref}[2]{%
  \href{http://www.ams.org/mathscinet-getitem?mr=#1}{#2}
}
\providecommand{\href}[2]{#2}

\end{document}